\title{Around evaluations of biset functors.}
\author{Baptiste Rognerud}
\begin{document}
\maketitle
\theoremstyle{plain}
\newtheorem{theo}{Theorem}[section]
\newtheorem{theox}{Theorem}[section]
\renewcommand{\thetheox}{\Alph{theox}}
\newtheorem{prop}[theo]{Proposition}
\newtheorem{lemma}[theo]{Lemma}
\newtheorem{coro}[theo]{Corollary}
\newtheorem{question}[theo]{Question}
\newtheorem*{notations}{Notations}
\theoremstyle{definition}
\newtheorem{de}[theo]{Definition}
\newtheorem{dex}[theox]{Definition}
\theoremstyle{remark}
\newtheorem{ex}[theo]{Example}
\newtheorem{re}[theo]{Remark}
\newtheorem{res}[theo]{Remarks}
\renewcommand{\labelitemi}{$\bullet$}
\newcommand{\comu}{co\mu}
\newcommand{\mo}{\ensuremath{\hbox{mod}}}
\newcommand{\Mo}{\ensuremath{\hbox{Mod}}}
\newcommand{\decale}[1]{\raisebox{-2ex}{$#1$}}
\newcommand{\decaleb}[2]{\raisebox{#1}{$#2$}}
\newcommand\Ind{{\rm Ind}}
\newcommand\Inf{{\rm Inf}}
\newcommand\Res{{\rm Res}}
\newcommand\Def{{\rm Def}}
\newcommand\Iso{{\rm Iso}}
\newcommand\field {{k}}
\newcommand\Mod {\hbox{-}{\rm Mod}}
\newcommand\Hom {{\rm Hom}}
\newcommand\End {{\rm End}}
\newcommand\Out{{\rm Out}}
\newcommand\mb{\hbox{-}}
\newcommand\D{\mathcal{D}}
\newcommand\C{\mathcal{C}}
\newcommand\FD{\mathcal{F}_{\mathcal{D},R}}
\newcommand\FDp{\mathcal{F}_{\mathcal{D}',R}}
\newcommand\FDk{\mathcal{F}_{\mathcal{D},k}}
\newcommand\FDR{\mathcal{F}_{\mathcal{D},R}}
\newcommand\FSk{\mathcal{F}_{G,k}}
\newcommand\Rad{{\rm Rad}}
\newcommand\Top{{\rm Top}}
\newcommand\kbg{kB(G,G)}
\newcommand\NV{\mathrm{NV}}
\begin{abstract}
The evaluation functor carries important information about the category of biset functors into the category of modules over the double Burnside algebra. Our purpose here, is to study double Burnside algebras via evaluations of biset functors. In order to avoid the difficult problem of vanishing of simple functors, we look at finite groups for which there is no non-trivial vanishing and we call them non-vanishing groups. This family contains all the abelian groups, but also infinitely many others. We show that for a non-vanishing group, there is an equivalence between the category of modules over the double Burnside algebra and a specific category of biset functors. Then, we deduce results about the highest-weight structure, and the self-injective property of the double Burnside algebra. We also revisit Barker's Theorem on the semi-simplicity of the category of biset functors. 
\end{abstract}
\par\noindent
{\it{\footnotesize   Key words:  Biset, Burnside ring, biset functor, quasi-hereditary algebra.}}
\par\noindent
{\it {\footnotesize A.M.S. subject classification: 19A22, 20C99, 16G10,18E10.}}
\section{Introduction}
Let $k$ be a field and $G$ be a finite group. The double Burnside ring $B(G,G)$ is the Grothendieck ring of the category of all finite $G$-$G$-bisets. Extending scalars to $k$, we have an algebra $kB(G,G)$ which is called the double Burnside algebra of $G$. The elements of this algebra are the formal linear combinations of $G$-$G$-bisets and the product behave like a tensor product. 

This algebra, and some of its subalgebras are crucial objects in some recent developments in the modular representation theory of finite groups, fusion systems and homotopy theory. We refer to the introduction of \cite{bst1}, for explicit motivations. In the last fifteen years, it has been studied by several mathematicians under several points of view. In \cite{bd1}, Robert Boltje and Susanne Danz were particularly interested by the subalgebras consisting of left-free bisets or bi-free bisets in characteristic zero. In \cite{bk}, Boltje and Burkhard Külshammer found the central primitive idempotents of these two algebras. However, the question is still open for the double Burnside algebra, even in characteristic zero. 

More generally, it is particularly difficult to generalize the results of the left-free double Burnside algebra to the whole double Burnside algebra. According to Jacques Thévenaz, there is a `quantic gap' between them. 

Using a completely different approach, it was recently shown by Serge Bouc, Radu Stancu and Jacques  Thévenaz in \cite{bst1}, that one can deduce a lot of information about the double Burnside algebra via evaluating biset functors. More surprisingly, they showed that the converse is also true: one can deduce a lot about biset functors by looking at double Burnside algebras using adjoints of the evaluation functor, but also more sophisticated tools. The goal of this article is to continue to develop and use this philosophy in order to have a better understanding of the double Burnside algebra.

With this approach, we are immediately stuck with the problem of evaluating simple biset functors: it is almost obvious that the evaluation at $G$ of a simple biset functor $S$ is either $0$ or a simple module over the endomorphism algebra of $G$. Unfortunately, it is a notoriously difficult combinatorial problem to understand when there is a vanishing. With our approach, we observe that this problem is not only annoying, it is also crucial when one wants to understand the double Burnside algebra of a finite group $G$ (See for example Corollary \ref{lift_proj}). We also remark that some problems of Section $9$ of \cite{bst1} are nothing but problems of vanishing of some simple functors. 

The simple functors are indexed by a \emph{minimal group} $H$ and a simple $k\mathrm{Out}(H)$-module. If $H$ is not isomorphic to a subquotient of $G$, then it is clear that the corresponding simple functor vanishes at $G$. This is what we call a \emph{trivial vanishing}. However, when $H$ is a subquotient of $G$ it is still possible that the simple functor vanishes at $G$ (see Corollary \ref{ex_vanishing}). In this case we speak about a \emph{non-trivial} vanishing. 

In order to avoid this difficulty, we consider finite groups such that there are no non-trivial vanishing of simple functors. We show that it is possible to reformulate this condition, involving the simple functors in an elementary condition, involving a composition of bisets. We call it the \emph{generating} relation. We give various equivalent interpretations of this relation. One in terms of vanishing of biset functors, another in terms of a composition of bisets and finally one in terms of representable functors in the biset category. As the main result, we show that the non-vanishing condition implies the existence of an \emph{equivalence of categories} between the category of modules over the double Burnside algebra of a finite group $G$ and the category of biset functors over $G$.

This gives us the desire to understand the generating relation and, more generally, the family of \emph{non-vanishing groups}. We show that the \emph{abelian groups} and the so-called self-dual groups are non-vanishing. Unfortunately, we did not succeed to classify all the non-vanishing groups. Understanding this family is in theory much easier than understanding the vanishing problem for the simple biset functors: it is enough to understand the generating relation. At first it seems easy, but we think that it is still a difficult problem. At the end of Section $5$, we give an illuminating example of a non-vanishing group in characteristic $0$. It can be seen that the non-vanishing property of this group is connected to some non-trivial facts about the ordinary representations of the group $GL(3,2)$. This example is so  pathological that it seems very unlikely to end up with a classification. 

In the rest of the article, we use the evaluation functor and the generating relation in order to deduce new information about the ring structure of the double Burnside algebra.

In Section $6$, we assume the field to be of characteristic zero. We show that the double Burnside algebra of a non-vanishing group is \emph{quasi-hereditary}. This result is nothing but an application of the equivalence discussed above and the famous Theorem of Peter Webb about the highest-weight structure of the category of biset functors. Quasi-hereditary algebras, and highest-weight categories come historically from the theory of representations of the complex semi-simple Lie algebras. A lot of important notions for Lie algebras admit a generalization, or an axiomatization, to quasi-hereditary algebras. In particular, there is a notion of \emph{exact Borel subalgebra} of a given quasi-hereditary algebra. These subalgebras seem to be of first importance in the recent development of the representation theory of algebras.  They are particularly important for the so-called theory of \emph{bocses}. It is known, and highly non-trivial, that for any quasi-hereditary $A$, there is a Morita equivalent algebra $A'$ which admits an exact Borel subalgebra. For biset functors, it turns out that this notion of Borel subalgebras is connected with the so-called \emph{deflation} functors. That is, biset functors without \emph{inflation}. We show that the category of deflation functors is an exact Borel subcategory of the category of biset functors. In order to prove this result, we observe that the induction functor between the category of deflation functors and the full category of biset functors is \emph{exact} without any assumption on the field or on the ring of coefficients.

For the double Burnside algebra, the situation is as usual more complicated. This algebra may be an example of a quasi-hereditary algebra which does not admit an exact Borel subalgebra. First, we show that over a field of characteristic zero, the left-free double Burnside algebra is a quasi-hereditary algebra. This result may be of independent interest. Nevertheless, the left-free double Burnside algebra is not in general a Borel subalgebra of the double Burnside algebra. The main reason is that there are less simple modules over the left-free algebra than over the whole double Burnside algebra. 

In the section $7$, we use the \emph{generating} relation to \emph{revisit} Barker's Theorem about the semi-simple property of the category of biset functors. We show, that understanding the semi-simplicity of the double Burnside algebra is equivalent to understand the semi-simplicity of the category of biset functors. We also deduce a useful characterization of this semi-simplicity. It is well-known that a group algebra is semi-simple if and only if the trivial module is projective. We prove that for biset functors and modules over the double Burnside algebra, we have a similar result for a suitable notion of trivial object. 

For $A_5$, one can see that the double Burnside algebra has infinite global dimension. More precisely, one can show that it has a self-injective block. For that reason, we feel natural to study the self-injective property of the double Burnside algebra. Using the characterization of the semi-simplicity by the trivial object, we show that the double Burnside algebra of a finite group is never a self-injective algebra except when it is semi-simple. 
 \newpage
 \tableofcontents
\section{Review on bisets and biset functors}
In this short section, we fix our notation and for the convenience of the reader, we recall some well known facts about biset functors that are crucial for the present article. We refer to the first chapters of \cite{bouc_biset} for more details. 
\newline\indent Let $G$ be a finite group. We denote by $s_G$ the set of the subgroups of $G$ and by $[s_G]$ a set of representatives of the conjugacy classes of the subgroups of $G$. As always, if $g\in G$ and $H$ is a subgroup of $G$, then we denote by $\, ^{g}H$ the conjugate of $H$ by $g$. 
\newline\indent Let $G$ and $H$ be two finite groups. Let $L$ be a subgroup of $G\times H$. There are four important subgroups associated to $L$:  
\begin{align*}
p_1(L) &= \{ g\in G\ ;\ \exists h\in H,\ (g,h)\in L\},\\
p_2(L) &= \{ h\in H\ ;\ \exists g\in G,\ (g,h)\in L\},\\
k_1(L) & = \{ g\in G\ ;\ (g,1)\in L\},\\
k_2(L) &= \{ h\in H\ ;\ (1,h)\in L\}. 
\end{align*}
It is clear that $k_i(L) \trianglelefteq p_i(L)$ for $i=1,2$ and that $\Big(k_1(L)\times k_2(L)\Big)\trianglelefteq L$. Moreover, there are canonical isomorphisms $$p_1(L)/k_1(L) \cong L/\Big(k_1(L)\times k_2(L)\Big) \cong p_2(L)/k_2(L).$$ The quotient $L/\Big(k_1(L)\times k_2(L)\Big)$ will be denoted by $q(L)$. 
\newline\indent Let $G$, $H$ and $K$ be three finite groups. Let $L$ be a subgroup of $G\times H$ and $M$ be a subgroup of $H\times K$. Then $L\star M$ is the subgroup of $G\times K$ defined by:
$$L\star M := \{ (g,k)\in G\times K\ ;\ \exists h\in H,\ (g,h)\in L \hbox{ and } (h,k)\in M \}. $$
Let $G$ and $H$ be two finite groups. A $G$-$H$-biset is a set endowed with a left action of $G$ and a right action of $H$ which commute. In other terms, A $G$-$H$-biset is nothing but a $G\times H^{op}$-set. If $L$ is a subgroup of $G\times H$, the quotient $(G\times H)/L$ is naturally a $G$-$H$-biset for the action given by
\[ a\cdot (g,h)L \cdot b = (ag,b^{-1}h)L, \forall a,g\in G, \forall b,h\in H.\]
The \emph{double Burnside} group $B(G,H)$ is the Grothendieck group of the category of finite $G$-$H$-bisets. The set of $[(G\times H)/L ]$ where $L\in [s_{G\times H}]$ is called the canonical basis of $B(G,H)$. Here, $[X]$ denotes the isomorphism class of the $G$-$H$-biset $X$. 
\newline\indent Let $G$, $H$ and $K$ be three finite groups. Let $U$ be a $G$-$H$-biset and $V$ be an $H$-$K$-biset. Then, we denote by $U\times_{H} V$ the set of $H$-orbits of $U\times V$ where $H$ acts diagonally on the cartesian product. That is $h\cdot (u,v) = (uh^{-1},hv)$ for $h\in H$ and $(u,v)\in U\times V$.  Extending this by bilinearity, we have a bilinear map from $B(G,H)\times B(H,K)$ to $B(G,K)$. This product will be understood as a composition, so we will sometimes use the notation $\circ$ instead of $\times_{H}$.
\newline\indent There is a Mackey formula for the composition of the transitive bisets. Let $G$, $H$ and $K$ be three finite groups. Let $L$ be a subgroup of $G\times H$ and $M$ be a subgroup of $H\times K$. Then, by Lemma $2.3.24$ of \cite{bouc_biset}, we have
\begin{equation}\label{mackey}
\big((G\times H)/L\big)\times_{H} \big((H\times K)/M\big) \cong \bigsqcup_{h\in [p_{2}(L)\backslash H /p_{1}(M) ]} (G\times K) / \big( L\star\, ^{(h,1)}M\big)
\end{equation}
\newline\indent Let $R$ be a commutative ring with $1$. For $G$ and $H$ two finite groups, we denote by $RB(G,H)$ the $R$-module $R\otimes_{\mathbb{Z}} B(G,H)$. We still denote by $\times_H$ its $R$-bilinear extension.
\begin{de}
The biset category $R\mathcal{C}$ over $R$ is the category where:
\begin{itemize}
\item The objects are the finite groups.
\item If $G$ and $H$ are two finite groups, then $\Hom_{R\C}(G,H)=RB(H,G)$. 
\item If $G$, $H$ and $K$ are finite groups, the composition is the product
\[ -\times_H - : RB(G,H)\times RB(H,K) \to RB(G,K). \] 
\item If $G$ is a finite group, then the identity morphism is $[(G\times G)/ \Delta(G)]$ where $\Delta(G)$ is the diagonal subgroup of $G$. 
\end{itemize}
A \emph{biset functor} over $R$ is an $R$-linear functor from $R\C$ to $R\Mod$. 
\end{de}
Here, we choose to apply an `op'-functor on the set of morphisms. This is just for convenience and this will allow us to work with covariant functors instead of contravariant functors. 
\newline\indent The category of bisets has a very important property. Every morphism can be written as sum of transitive morphisms. Moreover every transitive morphism can be \emph{factorised} and written as a composition of $5$ particular morphisms which are called \emph{elementary} by Bouc. This is Lemma $2.3.26$ of \cite{bouc_biset}. If $H$ is a subgroup of $G$, then we set $\Ind_{H}^{G} := \,_{G}G_H$ and $\Res^{G}_{H} = \, _{H}G_G$, where the action of $G$ and $H$ is given by the multiplication. If $N$ is a normal subgroup of $G$, then we set $\Inf_{G/N}^{G} = \, _{G} (G/N)_{G/N}$ and $\Def_{G/N}^{G} = \, _{G/N}(G/N)_{G}$ where $G/N$ acts via multiplication and $G$ via the canonical projection onto $G/N$. If $\alpha : H\to H'$ is a group isomorphism, then we set $\Iso(\alpha) = \, _{H'}H'_{H}$ where $H$ acts via the morphism $f$. Then, we have Bouc's \emph{Butterfly Lemma} (Lemma $2.3.26$ \cite{bouc_biset}). Let $G$ and $H$ be two finite groups. Let $L$ be a subgroup of $G\times H$. Then,
\begin{equation}\label{butterfly}
\big(G\times H\big)/L \cong  \Ind_{p_1(L)}^{G} \circ \Inf_{p_1(L)/k_{1}(L)}^{p_1(L)}\circ \Iso(\alpha) \circ \Def^{p_2(L)}_{p_2(L)/k_2(L)}\circ \Res^{H}_{p_2(L)},
\end{equation}
where $\alpha$ is the canonical isomorphism between $p_{2}(L)/k_2(L)$ and $p_{1}(L)/k_{1}(L)$. 
\newline\indent We are particularly interested by subcategories of the biset category in which the morphisms will still have similar decompositions. These categories are called admissible by Bouc.
\begin{de}[Definition $4.1.3$ \cite{bouc_biset}]
A subcategory $\D$ of $R\mathcal{C}$ is called admissible if it contains group isomorphisms and if it satisfies the following conditions:
\begin{itemize}
\item If $G$ and $H$ are objects of $\D$, then there is a subset $S(H,G)$ of the set of subgroups of $H\times G$, invariant under $(H\times G)$-conjugation, such that $\Hom_{\D}(G,H)$ is the submodule of $RB(H,G)$ generated by the elements $[(H\times G)/L]$, for $L\in S(H,G)$.
\item If $G$ and $H$ are groups of $\D$, and if $L\in S(H,G)$, then $q(L)$ is also an object of $\D$. Moreover, $\Def^{p_2(L)}_{p_2(L)/k_{2}(L)} \circ \Res^{G}_{p_2(L)}$ and $\Ind_{p_1(L)}^{H}\circ \Inf_{p_1{L}/k_{1}(L)}^{p_1(L)}$ are morphisms in $\D$. 
\end{itemize}
\end{de}
If $\D$ is an admissible biset category and $H,K \in \D$, we denote by $\D(K,H)$ the set $\Hom_{\D}(H,K)$. 
\newline\indent In this article we are mostly interested by two types of admissible biset categories: \emph{replete} biset categories and their subcategories consisting of the \emph{left-free bisets}. More precisely: let $G$ be a finite group. A \emph{section} of $G$ is a pair $(B,A)$ such that $A \trianglelefteq B \leqslant G$. The quotient $B/A$ is then called a \emph{subquotient} of $G$. If $H$ is isomorphic to a subquotient of $G$, then we use the notation $H\sqsubseteq G$. If it is isomorphic to a strict subquotient, we use the notation $H\sqsubset G$. We denote by $\Sigma(G)$ the set of all subquotients of $G$. Moreover if $\D$ is a class of finite groups, we say that $\D$ is closed under taking subquotients if whenever $H$ is a group isomorphic to $B/A$ where $(B,A)$ is a section of a group $G$ in $\D$, then $H$ is also in $\D$.
\begin{de}
Let $R$ be a commutative ring. A subcategory of $R\C$ is called \emph{replete} if it is a full subcategory of $R\C$ whose class of objects is closed under taking subquotients. 
\end{de}
In this article, we will manly work with replete biset categories and their subcategories consisting of left-free bisets. If $G$ is a finite group, then we abusively denote by $\Sigma(G)$ the full subcategory of $R\mathcal{C}$ consisting of all the groups isomorphic to subquotients of $G$. It is clearly a replete biset category. 
\newline\indent If $\D$ is an admissible biset category, then we denote by $\mathcal{F}_{\mathcal{D},R}$ the category of $R$-linear functors from $\D$ to $R\Mod$, and we call it the category of \emph{$R$-biset functors over $\D$}. 
\begin{de}
Let $R$ be a commutative ring and $G$ be a finite group. The category $\mathcal{F}_{\Sigma(G),R}$ is called the category of $R$-biset functors over $G$ and simply denoted by $\mathcal{F}_{G,R}$. 
\end{de} 
\begin{de}\label{double_burnside}
Let $R$ be a commutative ring with $1$ and $G$ be a finite group. The double Burnside algebra of $G$ is the endomorphism algebra of $G$ in $R\C$. In other words, the double Burnside algebra of $G$ is the module $RB(G,G)$ endowed with the product induced by the composition of $G$-$G$-bisets. 
\end{de}
Let $\D$ be an admissible biset category. As the category $R\Mod$ is abelian, it is well known that the category of biset functors over $\D$ is an \emph{abelian category}. The abelian structure is point-wise. In other words, it is defined on the evaluations of the functors. 
\newline\indent By Yoneda's Lemma, if $G$ is an object of $\D$, the \emph{representable functor} (also called the Yoneda functor) $Y_G:=\Hom_\D(G,-)=\D(-,G)$ is a projective object of $\FDR$. Moreover, every object of $\FDR$ is a quotient of a direct sum of Yoneda functors. So, the category $\FDR$ has \emph{enough projective} objects. Using a duality argument one can show that this category has \emph{enough injective} objects. See Corollary 3.2.13 of \cite{bouc_biset} for more details. We are also particularly interested by the family of simple functors which can be described (see the next paragraph) using the so-called evaluation functor.
\newline\indent Let $G \in \D$. If $F$ is a biset functor over $\D$, then its value $F(G)$ has a natural structure of module over the endomorphism algebra $\D(G,G)$ of $G$.  More precisely, we have a functor $ev_G : \FDR\to  \D(G,G)\Mod$. Since the abelian structure of $\FDR$ is defined on the evaluations, this functor is clearly exact. By usual arguments, it has a left and a right adjoint. A left adjoint denoted by $L_{G,-}$ can be defined as follows. Let $H \in \D$. Then, the right-multiplication by the elements of $\D(G,G)$ induces a structure of right $\D(G,G)$-module on $\D(H,G)$. Let $V$ be a $\D(G,G)$-module. Then, we set $L_{G,V}(H) := \D(H,G)\otimes_{\D(G,G)} V$. It is now straightforward to check that $L_{G,V}:= \D(-,G)\otimes_{\D(G,G)} V$ is a biset functor over $\D$ and that $V\mapsto L_{G,V}$ is a functor from $\D(G,G)\Mod$ to $\FDR$ (for more details and proofs see Section $3.3$ of \cite{bouc_biset}). 

If $V$ is a simple $\D(G,G)$-module, then $L_{G,V}$ has a unique simple quotient. Moreover it is easy to see that any simple functor appears as such quotient. However, this construction is not completely satisfying as one simple functor may be realized by many different pairs $(G,V)$. One can avoid this problem by considering minimal groups and simple modules over a quotient of the double Burnside algebra.

By Proposition $4.3.2$ of \cite{bouc_biset}, the quotient of the algebra $\D(G,G)$ by the ideal $I_{\D}(G,G)$ consisting of all the morphisms factorizing through groups strictly smaller than $G$ is isomorphic to the algebra of outer automorphisms of the group $H$, denoted by $R\Out(H)$. 
\begin{theo}
Let $k$ be a field. Let $\D$ be an admissible biset category. The set of isomorphism classes of simple objects of $\FDk$ is in bijection with the set of isomorphism classes of pairs $(H,V)$ where $H$ runs through the objects of $\D$ and $V$ through the simple $k\Out(H)$-modules. 
\end{theo}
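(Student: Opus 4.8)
The plan is to mimic Theorem~$4.3.10$ of \cite{bouc_biset}, which treats the full biset category, in the setting of an arbitrary admissible $\D$: to a simple functor one attaches a \emph{minimal group} together with the module it produces by evaluation there, and conversely one produces a simple functor from each such pair. Concretely, I would first fix a simple object $S$ of $\FDk$ and, since $S\neq 0$, choose an object $H$ of $\D$ of minimal order with $S(H)\neq 0$, called a minimal group for $S$. A short computation then shows that the ideal $I_{\D}(H,H)$ of endomorphisms of $H$ factoring through groups of strictly smaller order annihilates $S(H)$ — a factorization $\varphi=\beta\circ\alpha$ through $K\in\D$ with $|K|<|H|$ makes $S(\varphi)$ factor through $S(K)=0$ — so that $S(H)$ becomes a module over $\D(H,H)/I_{\D}(H,H)\cong k\Out(H)$ by Proposition~$4.3.2$ of \cite{bouc_biset}. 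Next I would check that $S(H)$ is in fact a simple $k\Out(H)$-module, equivalently a simple $\D(H,H)$-module: a proper nonzero submodule $W\subsetneq S(H)$ would, through the map $L_{H,W}\to L_{H,S(H)}$ followed by the counit $L_{H,S(H)}\to S$ (which is surjective since its evaluation at $H$ is the identity of $S(H)$, so its image is a nonzero subfunctor of the simple $S$), yield a subfunctor of $S$ with value exactly $W$ at $H$, contradicting simplicity. This produces a candidate assignment $S\mapsto(H,S(H))$.

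The step I expect to be the main obstacle is showing this assignment is well defined up to isomorphism of pairs — that is, that any two minimal groups of $S$ are isomorphic by an isomorphism transporting the evaluations. Given minimal groups $H$ and $H'$ (so $|H|=|H'|$), I would use the counit surjection $L_{H',S(H')}\twoheadrightarrow S$, evaluate it at $H$ to obtain a surjection $\D(H,H')\otimes_{\D(H',H')}S(H')\twoheadrightarrow S(H)\neq 0$, and deduce that some transitive biset $(H\times H')/L$ acts nontrivially on $S$. By the Butterfly Lemma \eqref{butterfly} this biset factors through $q(L)$, which is a subquotient of both $H$ and $H'$ and lies in $\D$ by admissibility; if $|q(L)|<|H|$ this factorization kills $S$ by minimality, so $|q(L)|=|H|=|H'|$, forcing $p_1(L)=H$, $k_1(L)=1$, $p_2(L)=H'$, $k_2(L)=1$, i.e. $L$ is the graph of an isomorphism $\alpha\colon H'\to H$ and $(H\times H')/L=\Iso(\alpha)$. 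Hence $H\cong H'$, and $S(\Iso(\alpha))$ identifies $S(H')$ with $S(H)$ compatibly with the $\Out$-actions, so $(H,S(H))\cong(H',S(H'))$ as pairs. The delicate point here is precisely that the Butterfly factorization stays inside $\D$ (which is exactly what admissibility guarantees) and that ``strictly smaller'' is read consistently in terms of order, matching the description of $I_{\D}(H,H)$.

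It then remains to build an inverse and verify the two matching conditions. Given $(H,V)$ with $H\in\D$ and $V$ a simple $k\Out(H)$-module, I would inflate $V$ to a (still simple) $\D(H,H)$-module and set $S_{H,V}:=L_{H,V}/J_{H,V}$, the unique simple quotient of $L_{H,V}$, with $J_{H,V}$ its unique maximal subfunctor — both exist because $V$ is simple, as recalled just before the statement. Since $L_{H,V}$ is generated by $L_{H,V}(H)=V$ with $V$ simple over $\D(H,H)$, every proper subfunctor vanishes at $H$, so $J_{H,V}(H)=0$ and $S_{H,V}(H)=V$; and the subfunctor of $L_{H,V}$ generated by all $L_{H,V}(K)$ with $K\in\D$ and $|K|<|H|$ has value $I_{\D}(H,H)\cdot V=0$ at $H$, hence is proper, hence contained in $J_{H,V}$, so $S_{H,V}$ vanishes on every group of smaller order. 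Thus $H$ is a minimal group for $S_{H,V}$ with $S_{H,V}(H)=V$, which gives surjectivity of $S\mapsto(H,S(H))$; and for a simple $S$ with minimal group $H$, the counit $L_{H,S(H)}\twoheadrightarrow S$ exhibits $S$ as the unique simple quotient $S_{H,S(H)}$ of $L_{H,S(H)}$, so $S$ is determined up to isomorphism by the pair $(H,S(H))$, which gives injectivity. Combining the previous paragraph with these two verifications establishes the claimed bijection.
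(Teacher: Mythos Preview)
Your proof is correct and follows exactly the approach of Theorem~$4.3.10$ of \cite{bouc_biset}, which is precisely what the paper cites as its proof. You have supplied the details of that reference, including the key use of admissibility to keep the Butterfly factorization inside $\D$, so there is nothing to add.
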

\begin{proof}
See Theorem $4.3.10$ of \cite{bouc_biset}. 
\end{proof}
If $(H,V)$ is a pair consisting of a finite group and a simple $k\Out(H)$-module, then since $k\Out(H)$ is a quotient of $\D(H,H)$, one can see $V$ as a simple $\D(H,H)$-module by inflation. Then $S_{H,V}$ is nothing but the quotient $L_{H,V}/J_{H,V}$, where $J_{H,V}$ is the unique maximal subfunctor of $L_{H,V}$. If $K \in \D$, then by Remark $4.2.6$ of \cite{bouc_biset}, we have
\begin{equation}\label{jhv}
J_{H,V}(K) = \{ \sum_{i=1}^{n} \phi_i \otimes v_i \in L_{H,V}(K)\ ;\ \forall \psi\in \D(H,K),\ \sum_{i=1}^{n}(\psi \circ \phi_i)\cdot v_i=0\}.
\end{equation}
Note that this subfunctor has the property of vanishing at the group $H$. 
\newline\indent Let us recall that a biset functor is \emph{finitely generated} if it is a quotient of a finite direct sum of Yoneda functors. In particular, the simple functors and the Yoneda functors are finitely generated. As in the case of modules over a ring, the choice axiom implies the existence of maximal subfunctors of a finitely generated functor. As always, we define the radical of a biset functor $F$ as the intersection of all maximal subfunctors, and we denote it by $\Rad(F)$. Over a field, the category of finitely generated biset functors is a Krull-Schmidt category. In particular, every simple functor $S_{H,V}$ has a \emph{projective cover} in $\FDk$. We denote by $P_{H,V}$ a projective cover of $S_{H,V}$. 
\newline\indent In this article, we will also need the following family of biset functors. Let $H$ and $K$ be two objects of $\mathcal{D}$. Then $$\sum_{\underset{X\sqsubset H}{X\in \mathcal{D}}} \D(K,X)\D(X,H),$$ can be viewed as a submodule of $\D(K,H)$ via composition of morphisms. We denote by $I_{\mathcal{D}}(K,H)$ this submodule and by $k\overrightarrow{\D}(K,H)$ the quotient $k\D(K,H)/I_\mathcal{D}(K,H)$. This is a natural right $k\mathrm{Out}(H)$-module. If $V$ is a $k\mathrm{Out}(H)$-module, then we denote by $\Delta_{H,V}^{\mathcal{D}}$ the functor
\begin{equation*}
\Delta_{H,V}^{\mathcal{D}} := K\mapsto \overrightarrow{\D}(K,H)\otimes_{k\mathrm{Out}(H)}V. 
\end{equation*}
When the context is clear enough, we will simply denote it by $\Delta_{H,V}$. These functors were introduced first, in a different form, by Peter Webb in \cite{webb_strat2}. Webb proved that under suitable hypothesis they are the standard functors in the \emph{highest-weight} structure of the category of bisets functors. Since we think that this is a fundamental result, we will keep this idea in mind by calling them the \emph{standard biset functors}. Note that the module $V$ is not supposed to be simple here.  
\newline\indent Similarly,  we let $\overleftarrow{\D}(H,K)$ be the quotient of $\D(H,K)$ by the $R$-submodule consisting of all morphisms factorizing through groups strictly smaller than $H$. Let $V$ be a $k\Out(H)$-module. We denote by $\nabla_{H,V}^{\D}$ the functor
$$\nabla_{H,V}^{\D} := K\mapsto \Hom_{k\Out(H)}\big(\overleftarrow{\D}(H,K),V\big). $$ 
These functors are called the \emph{co-standard} biset functors. 
If $H\in \D$, then we have a functor from $\FDk$ to $k\Out(H)\Mod$ defined by $$F\mapsto \underline{F}(H) = \bigcap_{\underset{U\in \D(K,H)}{K\sqsubset H}} \mathrm{Ker}\big(F(U)\big).$$ 
\begin{lemma}
Let $\D$ be an admissible biset category. Let $H\in \D$. The functor sending a $k\Out(H)$-module $V$ to $\Delta_{H,V}$ is a left adjoint to the functor sending a biset functor $F$ to $\underline{F}(H)$.
\end{lemma}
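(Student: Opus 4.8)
The plan is to construct, for each $k\Out(H)$-module $V$ and each $F\in\FDk$, a bijection
\[
\Hom_{\FDk}(\Delta_{H,V},F)\;\cong\;\Hom_{k\Out(H)}\!\big(V,\underline F(H)\big)
\]
natural in both arguments, by an explicit tensor--hom computation. Two preliminary observations underlie the whole proof. First, by Proposition $4.3.2$ of \cite{bouc_biset} one has $\overrightarrow{\D}(H,H)=\D(H,H)/I_\D(H,H)\cong k\Out(H)$ as algebras, whence $\Delta_{H,V}(H)=\overrightarrow{\D}(H,H)\otimes_{k\Out(H)}V\cong V$; moreover, under this identification the $\D(H,H)$-action on $\Delta_{H,V}(H)$ is the inflation along $\D(H,H)\twoheadrightarrow k\Out(H)$ of the given $k\Out(H)$-action on $V$. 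Second, if $K\sqsubset H$ and $U\in\D(K,H)$, then $U=\mathrm{id}_K\circ U\in\D(K,K)\D(K,H)\subseteq I_\D(K,H)$ (the index $X=K$ being legitimate in $I_\D(K,H)$ precisely because $K\sqsubset H$), so $U$ maps to $0$ in $\overrightarrow{\D}(K,H)$, and hence $\Delta_{H,V}(U)\colon\Delta_{H,V}(H)\to\Delta_{H,V}(K)$ is the zero map.

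Given $\eta\colon\Delta_{H,V}\to F$, I let $\Phi(\eta)$ be its component $\eta_H\colon V\cong\Delta_{H,V}(H)\to F(H)$, which is $\D(H,H)$-linear by naturality of $\eta$. The key point is that $\eta_H(V)\subseteq\underline F(H)$: for every $K\sqsubset H$ and $U\in\D(K,H)$ the second observation gives $\Delta_{H,V}(U)=0$, so naturality of $\eta$ forces $F(U)\circ\eta_H=\eta_K\circ\Delta_{H,V}(U)=0$, i.e.\ $\eta_H(V)\subseteq\mathrm{Ker}\,F(U)$. Since $I_\D(H,H)$ annihilates $\underline F(H)$, the corestricted map $\eta_H\colon V\to\underline F(H)$ is even $k\Out(H)$-linear, so $\Phi\colon\Hom_{\FDk}(\Delta_{H,V},F)\to\Hom_{k\Out(H)}(V,\underline F(H))$ is well defined.

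Conversely, given a $k\Out(H)$-linear $\theta\colon V\to\underline F(H)$, I define $\Psi(\theta)=\eta$ by prescribing each component on generators via $\eta_K(\overline\alpha\otimes v):=F(\alpha)\big(\theta(v)\big)$ for $\alpha\in\D(K,H)$ and $v\in V$, and I must check this is well defined on $\overrightarrow{\D}(K,H)\otimes_{k\Out(H)}V$. If $\alpha=\beta\circ\gamma$ with $\gamma\in\D(X,H)$, $\beta\in\D(K,X)$, $X\sqsubset H$, then $F(\alpha)(\theta(v))=F(\beta)\big(F(\gamma)\theta(v)\big)=0$ because $\theta(v)\in\underline F(H)$; and if $\omega\in k\Out(H)$ with lift $\tilde\omega\in\D(H,H)$, then $F(\alpha\circ\tilde\omega)(\theta(v))=F(\alpha)\big(F(\tilde\omega)\theta(v)\big)=F(\alpha)\big(\theta(\omega v)\big)$, using that $\D(H,H)$ acts on $\underline F(H)$ through $k\Out(H)$ and that $\theta$ is $k\Out(H)$-linear. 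These are exactly the relations defining the quotient and the balanced tensor product, so $\eta_K$ descends; naturality of the family $(\eta_K)_K$ is then immediate from $F(\gamma\circ\alpha)=F(\gamma)\circ F(\alpha)$, and $\Psi$ is well defined.

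Finally I would verify that $\Phi$ and $\Psi$ are mutually inverse and natural. One checks $\Phi(\Psi(\theta))(v)=F(\mathrm{id}_H)(\theta(v))=\theta(v)$, and, applying naturality of $\eta$ to $\alpha\in\D(K,H)$, $\Psi(\Phi(\eta))_K(\overline\alpha\otimes v)=F(\alpha)\big(\eta_H(\overline{\mathrm{id}_H}\otimes v)\big)=\eta_K\big(\Delta_{H,V}(\alpha)(\overline{\mathrm{id}_H}\otimes v)\big)=\eta_K(\overline\alpha\otimes v)$; naturality in $V$ and $F$ is then a routine diagram chase. I do not anticipate a genuine obstacle: the only nontrivial input is the identification $\overrightarrow{\D}(H,H)\cong k\Out(H)$ from Proposition $4.3.2$, and the one small but decisive point is that any morphism out of $H$ whose target is a strict subquotient of $H$ automatically factors through a strict subquotient (namely its own target) — this is what makes the image of $\eta_H$ land in $\underline F(H)$ and, dually, makes the formula for $\Psi(\theta)$ well defined, so that the Yoneda adjunction for $\D(-,H)$ descends cleanly to $\overrightarrow{\D}(-,H)$.
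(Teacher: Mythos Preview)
Your argument is correct and is precisely the standard tensor--hom computation that the paper has in mind: the paper's own proof consists of the single sentence ``This is straightforward'', so you have simply spelled out what it omits. The one point worth noting is that your ``second observation'' can be strengthened to $\overrightarrow{\D}(K,H)=0$ whenever $K\sqsubset H$ (so $\Delta_{H,V}(K)=0$ outright), which makes the vanishing of $\Delta_{H,V}(U)$ in that range automatic; otherwise everything is in order.
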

\begin{proof}
This is straightforward. 
\end{proof}
\section{Evaluations of biset functors}
Let $k$ be a field and $\D$ be an admissible biset category. Let $G\in \D$. Then, we have an evaluation functor from $\FDk$ to $\D(G,G)\Mod$. It is well known that this evaluation functor carries a lot of informations of the category of biset functors into the category of $\D(G,G)$-modules. More recently, Bouc, Stancu and Thévenaz showed in \cite{bst1} and \cite{bst2} that the converse is also true. For example, we have:   
\begin{prop}\label{ev_fact}
Let $\D$ be an admissible biset category. Let $S$ be a simple biset functor over $\D$. Let $G$ be a finite group such that $S(G)\neq 0$. Let $F$ be a biset functor over $\D$. Then, the following are equivalent.
\begin{enumerate}
\item $S$ is isomorphic to a subquotient of $F$.
\item The simple $\D(G,G)$-module $S(G)$ is isomorphic to a subquotient of $F(G)$.
\end{enumerate}
\end{prop}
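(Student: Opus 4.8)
The plan is to prove the two implications separately; $(1)\Rightarrow(2)$ is essentially free, while $(2)\Rightarrow(1)$ carries the content. For $(1)\Rightarrow(2)$, if $F_2\subseteq F_1\subseteq F$ are subfunctors with $F_1/F_2\cong S$, then I would apply the evaluation functor $ev_G$, which is exact since the abelian structure of $\FDk$ is pointwise; this yields $F_2(G)\subseteq F_1(G)\subseteq F(G)$ with $F_1(G)/F_2(G)\cong S(G)$, so $S(G)$ is a subquotient of $F(G)$ and nothing more is needed.

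For $(2)\Rightarrow(1)$ I would first record that $S(G)$ is a simple $\D(G,G)$-module: a proper nonzero submodule $M\subsetneq S(G)$ would generate a subfunctor $\langle M\rangle_S$ of $S$ whose value at $G$ is $\D(G,G)\cdot M=M$, hence a proper nonzero subfunctor, contradicting simplicity. So the hypothesis provides $\D(G,G)$-submodules $M_2\subseteq M_1\subseteq F(G)$ with $M_1/M_2\cong S(G)$. The next step is to realize these modules by subfunctors of $F$. For any $\D(G,G)$-submodule $M\subseteq F(G)$, the subfunctor $\langle M\rangle_F$ of $F$ generated by $M\subseteq F(G)$ satisfies $\langle M\rangle_F(G)=\D(G,G)\cdot M=M$; dually, $F^{M}(K):=\{x\in F(K)\ ;\ \psi(x)\in M\ \text{for all }\psi\in\D(G,K)\}$ defines a subfunctor with $F^{M}(G)=M$, which is by construction the largest subfunctor of $F$ whose value at $G$ is contained in $M$. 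Taking $F_1:=\langle M_1\rangle_F$ and $F_2:=F_1^{M_2}$, I obtain $F_2\subseteq F_1\subseteq F$ with $(F_1/F_2)(G)\cong M_1/M_2\cong S(G)$ and, by the maximality of $F_2$, with the property that every nonzero subfunctor of $E:=F_1/F_2$ has nonzero value at $G$.

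It then suffices to prove: if $E$ is a biset functor with $E(G)$ simple and every nonzero subfunctor of $E$ has nonzero value at $G$, then $S\cong D$ for some subfunctor $D$ of $E$, where $S$ is the simple functor with $S(G)\cong E(G)$. I would take $D:=\langle E(G)\rangle_E$, so that $D(G)=E(G)$ and $D$ is generated by $D(G)$. Any nonzero subfunctor $D'\subseteq D$ has $D'(G)\neq 0$, hence $D'(G)=E(G)$ by simplicity of $E(G)$, so $D'\supseteq\langle D'(G)\rangle_{D}=D$; therefore $D$ is simple, with $D(G)\cong S(G)\neq 0$. Since a simple biset functor $T$ with $T(G)\neq 0$ is determined up to isomorphism by the $\D(G,G)$-module $T(G)$, we get $D\cong S$, and thus $S\cong D\subseteq E=F_1/F_2$ is a subquotient of $F$.

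The point requiring care — and the reason for this somewhat indirect route — is that the category $\FDk$ is not a length category: finitely generated biset functors need not have finite length, so one cannot simply choose composition series of $F$ and of $F(G)$ and match composition factors. The extremal-subfunctor constructions $\langle M\rangle_F$ and $F^{M}$ are exactly what make the argument go through with no finiteness hypothesis on $F$. The two auxiliary facts invoked (that $S(G)$ is simple when $S$ is simple with $S(G)\neq 0$, and that a simple functor $T$ with $T(G)\neq 0$ is recovered from the $\D(G,G)$-module $T(G)$) are part of the standard theory of simple biset functors recalled above.
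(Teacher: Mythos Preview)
Your argument is correct. The paper itself does not give a proof of this proposition; it simply refers to Proposition~3.5 of \cite{bst1} and remarks that the argument there is ``formal'' and hence carries over to any admissible biset category. Your proof supplies precisely such a formal argument, and the extremal-subfunctor constructions $\langle M\rangle_F$ and $F^{M}$ you use are the standard devices for this kind of statement; this is almost certainly the content of the cited reference. Your observation that no finiteness hypothesis on $F$ is needed is also to the point.

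Two small remarks. First, in the sentence ``then $S\cong D$ for some subfunctor $D$ of $E$, where $S$ is the simple functor with $S(G)\cong E(G)$'', the clause ``where $S$ is\ldots'' is redundant since $S$ is already fixed by the statement; you mean simply that the simple subfunctor $D$ you construct satisfies $D(G)\cong S(G)$, whence $D\cong S$. Second, the fact that a simple functor $T$ with $T(G)\neq 0$ is determined by the $\D(G,G)$-module $T(G)$ follows from the assertion, recalled in Section~2, that for any simple $\D(G,G)$-module $V$ the functor $L_{G,V}$ has a unique simple quotient: any such $T$ receives a nonzero (hence surjective) map from $L_{G,T(G)}$ via adjunction, so $T$ is that unique quotient. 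It would do no harm to make this one line explicit.
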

\begin{proof}
It is Proposition $3.5$ of \cite{bst1} for the case of a replete biset category. The proof is formal, it can be applied to any admissible biset category. 
\end{proof}
However, the evaluation at $G$ is not always compatible with algebraic operations. For example, it does not commute with taking the radical. Indeed, as explained in Section $9$ of \cite{bst1}, the evaluation at a group $G$ of the radical of a biset functor is not always the radical of the evaluation. In \cite{qh_doubleburnside}, we observed that this phenomenon is connected with the \emph{vanishing property} of the simple biset functors. 
\begin{lemma}\label{ev}
Let $\D$ be an admissible biset category. Let $F\in \mathcal{F}_{\mathcal{D},k}$ be a finitely generated biset functor. Let $G\in \mathcal{D}$. 
\begin{enumerate}
\item $Rad\big(F(G)\big)\subseteq [Rad(F)](G)$.
\item If the simple quotients of $F$ do not vanish at $G$, then $Rad\big(F(G)\big) =[Rad(F)](G)$.
\end{enumerate}
Let $P$ be a projective indecomposable biset functor. If the simple quotient of $P$ does not vanish at $G$, then $P(G)$ is a projective indecomposable $\D(G,G)$-module.
\end{lemma}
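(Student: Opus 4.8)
The plan is to establish $(1)$ directly, then the statement about indecomposable projectives (call it $(3)$), and finally $(2)$ as a consequence of $(1)$ and $(3)$; throughout I would work inside the module category of the finite dimensional algebra $A:=\D(G,G)=kB(G,G)$ and use only that $ev_G$ is exact with left adjoint $L_{G,-}$. For $(1)$: since $F$ is finitely generated, $F/\Rad(F)$ is semisimple of finite length, say $\bigoplus_i S_i$ with the $S_i$ simple, so exactness of $ev_G$ gives that $F(G)/[\Rad(F)](G)\cong (F/\Rad(F))(G)=\bigoplus_i S_i(G)$ is a semisimple $A$-module, each $S_i(G)$ being $0$ or simple. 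As $A$ is finite dimensional, $\Rad\big(F(G)\big)$ is the smallest submodule of $F(G)$ with semisimple quotient, hence $\Rad\big(F(G)\big)\subseteq [\Rad(F)](G)$.

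For $(3)$, let $S$ be the simple quotient of $P$, so that $P$ is the projective cover of $S$ in $\FDk$, and set $W:=S(G)$, which by hypothesis is a nonzero, hence simple, $A$-module. The key step is the identity $\Hom_{\FDk}(L_{G,W},S)\cong\Hom_A(W,S(G))=\Hom_A(W,W)\neq 0$ coming from the adjunction $L_{G,-}\dashv ev_G$, which produces a surjection $L_{G,W}\twoheadrightarrow S$; choosing a projective cover $Q\twoheadrightarrow W$ over $A$ and applying the right exact functor $L_{G,-}$ yields a surjection $L_{G,Q}\twoheadrightarrow L_{G,W}\twoheadrightarrow S$, and $L_{G,Q}$ is projective because the left adjoint of an exact functor preserves projectives. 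Consequently $P$ is a direct summand of $L_{G,Q}$, so $P(G)$ is a direct summand of $L_{G,Q}(G)=A\otimes_A Q=Q$; since $Q$ is indecomposable (being a projective cover of a simple module over a finite dimensional algebra) and $P(G)\neq 0$ (it has $W\neq 0$ as a quotient), it follows that $P(G)=Q$ is a projective indecomposable $A$-module, with top $W=S(G)$.

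For $(2)$, by $(1)$ it remains to prove $[\Rad(F)](G)\subseteq\Rad\big(F(G)\big)$. I would write $F/\Rad(F)=\bigoplus_i S_i^{m_i}$ with the $S_i$ pairwise non-isomorphic simple functors and the $m_i$ finite; by hypothesis $S_i(G)\neq 0$ for every $i$, and the modules $S_i(G)$ are pairwise non-isomorphic, for if $S_i(G)\cong S_j(G)$ then $S_i(G)$ is a subquotient of $S_j$, so $S_i$ is a subquotient of $S_j$ by Proposition~\ref{ev_fact}, forcing $i=j$. Now take a projective cover $\bigoplus_i P_{S_i}^{m_i}\twoheadrightarrow F$ in $\FDk$ and evaluate at $G$: by $(3)$ the source becomes the projective $A$-module $\bigoplus_i P_{S_i}(G)^{m_i}$ with top $\bigoplus_i S_i(G)^{m_i}$, so $\Top\big(F(G)\big)$ is a quotient of $\bigoplus_i S_i(G)^{m_i}$; on the other hand, by $(1)$ there is a surjection $\Top\big(F(G)\big)\twoheadrightarrow F(G)/[\Rad(F)](G)=\bigoplus_i S_i(G)^{m_i}$. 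Composing these two surjections produces a surjective endomorphism of the finite length module $\bigoplus_i S_i(G)^{m_i}$, hence an isomorphism, and therefore $\Top\big(F(G)\big)\to F(G)/[\Rad(F)](G)$ is an isomorphism, that is, $[\Rad(F)](G)=\Rad\big(F(G)\big)$.

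The step I expect to be the main obstacle is the bookkeeping in $(2)$: one must be certain that the top of $F(G)$ has exactly the constituents $S_i(G)$ with exactly the multiplicities $m_i$ and nothing spurious, which is precisely what $(3)$ secures (each $P_{S_i}(G)$ is projective indecomposable with \emph{simple} top $S_i(G)$), together with Proposition~\ref{ev_fact} keeping the $S_i(G)$ pairwise distinct. I would also want to check two standard facts of the framework used tacitly above: that a finitely generated biset functor admits a projective cover (so that $\bigoplus_i P_{S_i}^{m_i}\twoheadrightarrow F$ exists with superfluous kernel), and that an indecomposable projective biset functor has a simple top (so that $(3)$ indeed applies to it). Everything else is routine homological algebra over the finite dimensional algebra $kB(G,G)$.
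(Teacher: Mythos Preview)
Your argument is correct, but the route differs from the paper's in an instructive way. The paper proves $(1)$ and $(2)$ directly at the level of maximal subobjects, and only then deduces the statement about projectives. For $(2)$ it simply observes: given a maximal submodule $N\subseteq F(G)$, the subfunctor $\overline{N}$ of $F$ generated by $N$ satisfies $\overline{N}(G)=N\subsetneq F(G)$, so $\overline{N}\subsetneq F$ and one may pick a maximal subfunctor $M\supseteq\overline{N}$; under the non-vanishing hypothesis $M(G)\subsetneq F(G)$, whence $M(G)=N$ by maximality of $N$. Thus every maximal submodule of $F(G)$ is the evaluation of a maximal subfunctor, and $[\Rad(F)](G)\subseteq\Rad(F(G))$ follows immediately. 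This is shorter and needs no projective covers, no Proposition~\ref{ev_fact}, and no counting of multiplicities. For the projective statement the paper uses Yoneda: $S(G)\neq 0$ gives a surjection $Y_G\twoheadrightarrow S$, so $P\mid Y_G$ and $P(G)\mid \D(G,G)$ is projective; indecomposability then comes from $(2)$ applied to $P$.

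Your ordering $(1)\Rightarrow(3)\Rightarrow(2)$ inverts this dependence. Your proof of $(3)$ via $L_{G,Q}$ is a nice variant: by showing $P\mid L_{G,Q}$ with $Q$ the projective cover of $S(G)$, you obtain $P(G)\mid Q$ and hence $P(G)=Q$ directly, getting indecomposability without appealing to $(2)$. This is what lets you reverse the logical order. Your proof of $(2)$ then works, though note that the appeal to Proposition~\ref{ev_fact} (pairwise non-isomorphism of the $S_i(G)$) is not actually needed: your finite-length argument only requires that $\Top(P(G))$ and $F(G)/[\Rad(F)](G)$ are isomorphic as modules, which is already clear since both equal $\bigoplus_i S_i(G)^{m_i}$. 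So your argument is sound but carries some unnecessary baggage compared with the paper's two-line proof of $(2)$.
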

\begin{proof}
Let $M$ be a maximal subfunctor of $F$. Then, $M(G)$ is a maximal submodule of $F(G)$ if the simple quotient $F/M$ does not vanish at $G$ and $M(G)=F(G)$ otherwise. For the second part, if $N$ is a maximal submodule of $F(G)$, let $\overline{N}$ be the subfunctor of $F$ generated by $N$. There is a maximal subfunctor $M$ of $F$ such that $\overline{N}\subseteq M\subset F$. We have $\overline{N}(G)=N\subseteq M(G)\subset F(G)$. By maximality, $M(G)=N$. The result follows. 
\newline\indent If $P$ is a projective indecomposable functor, it has a simple top $S$. By hypothesis, the simple functor $S$ does not vanish at $G$. By Yoneda's Lemma, there is a non-zero morphism between the representable functor $Y_G$ and $S$. Since $S$ is simple, this morphism is surjective. Moreover, the functor $P$ is a projective cover of $S$, so $P$ is isomorphic to a direct summand of $Y_{G}$. This implies that $P(G)$ is a direct summand of $\D(G,G)$. In particular, the evaluation $P(G)$ is a projective $\D(G,G)$-module. Moreover, the module $P(G)$ has a unique simple quotient $S(G)$, so it is indecomposable. 
\end{proof}
\begin{re}
The results of Section $9$ of \cite{bst1} can by illuminated by this Lemma.
\end{re}
As corollary, we also have the following useful result.
\begin{coro}\label{burnside_proj}
Let $G$ be a finite group. Let $k$ be a field. Then, the Burnside module $kB(G)$ is an indecomposable projective $kB(G,G)$-module. It is a projective cover of the simple $kB(G,G)$-module $S_{1,k}(G)$. 
\end{coro}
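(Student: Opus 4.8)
The plan is to recognise $kB(G)$ as the evaluation at $G$ of the Burnside biset functor and then to apply the last assertion of Lemma~\ref{ev}. We work in $\FSk = \mathcal{F}_{\Sigma(G),k}$, so that $\D(G,G) = kB(G,G)$, and we note that the trivial group $1$ is a subquotient of $G$, hence an object of $\D = \Sigma(G)$. First I would observe that the representable functor $Y_1 = \D(-,1)$ is the Burnside functor $K \mapsto \D(K,1) = kB(K,1) = kB(K)$, that its value $Y_1(G) = \D(G,1) = kB(G)$ is precisely the Burnside module, and that this identification is one of $kB(G,G)$-modules: on both sides $kB(G,G)$ acts by composition of bisets, $U\cdot X = U\times_G X$, the only thing to unwind being the ``op''-convention in the definition of $\C$. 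Next, since $\D(1,1) = kB(1,1)\cong k$, one has $L_{1,k}(K) = \D(K,1)\otimes_k k = Y_1(K)$, so $Y_1 = L_{1,k}$. As $k$ is a simple $\D(1,1)$-module, $L_{1,k}$ has a unique simple quotient, namely $S_{1,k}$; hence $Y_1$ has a unique maximal subfunctor and is therefore indecomposable. Being representable it is projective, so $Y_1 = P_{1,k}$ is a projective cover of $S_{1,k}$ in $\FSk$.

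The only point requiring a genuine, though short, argument is that $S_{1,k}$ does not vanish at $G$ (in fact I would prove it does not vanish at any finite group). Under the identification $L_{1,k}(G) = \D(G,1) = kB(G)$, formula~(\ref{jhv}) says, after using $k$-bilinearity of $\circ$, that $J_{1,k}(G)$ consists of those $x\in kB(G)$ such that $\psi\circ x = 0$ in $\D(1,1) = k$ for every $\psi\in\D(1,G)$. Taking $x = [G/G]$, the class of the one-point $G$-set (the identity of the Burnside ring $kB(G)$), and $\psi$ any transitive $1$-$G$-biset, the composite $\psi\circ x$ is the class in $B(1,1)\cong\mathbb{Z}$ of a one-point set, hence equal to $1\neq 0$ in $k$. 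Therefore $[G/G]\notin J_{1,k}(G)$ and $S_{1,k}(G) = L_{1,k}(G)/J_{1,k}(G)\neq 0$. I expect this double-coset bookkeeping, together with keeping the two conventions (for $\Hom$ and for $\circ$) straight, to be the only mildly delicate part of the argument.

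Finally, I would apply the last statement of Lemma~\ref{ev} to the indecomposable projective functor $P_{1,k} = Y_1$: since its simple quotient $S_{1,k}$ does not vanish at $G$, the evaluation $P_{1,k}(G) = kB(G)$ is an indecomposable projective $kB(G,G)$-module whose unique simple quotient is $S_{1,k}(G)$; equivalently, $kB(G)$ is a projective cover of the simple $kB(G,G)$-module $S_{1,k}(G)$, which is the desired conclusion.
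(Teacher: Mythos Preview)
Your proof is correct and follows essentially the same route as the paper: identify $kB = Y_1 = L_{1,k}$ as a projective indecomposable functor with simple top $S_{1,k}$, check that $S_{1,k}(G)\neq 0$, and then apply the last part of Lemma~\ref{ev}. The only cosmetic difference is in the non-vanishing step: the paper argues that $1\cong G/G$ is a quotient of $G$ and writes $Id_{1}=\Def^{G}_{G/G}\circ\Inf^{G}_{G/G}$ so that the identity of $S_{1,k}(1)$ factors through $S_{1,k}(G)$, whereas you unwind formula~(\ref{jhv}) on the element $[G/G]$; these are the same computation in different clothing.
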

\begin{proof}\label{principal}
Since $kB$ is a representable functor in $\mathcal{F}_{G,k}$, it is projective. The functor $kB$ is nothing but the functor $L_{1,k}$. By the arguments detailed below Definition \ref{double_burnside},  this functor has a simple top which is isomorphic to $S_{1,k}$. Since the trivial group $1$ is a quotient of the group $G$, one can write $Id_{G/G} = \Def^{G}_{G/G} \circ \Inf_{G/G}^{G}$. By definition, $S_{1,k}(G/G)\cong k$ and since the identity of $S_{1,k}(G/G)$ factorizes through $S_{1,k}(G)$, the last cannot be zero. So, the simple functor $S_{1,k}$ does not vanish at $G$. The result follows from Lemma \ref{ev}.  
\end{proof}
Let $k$ be a field. As explained above, the simple biset functors are parametrized by the pairs $(H,V)$ where $H$ runs through the isomorphism classes of objects of $\mathcal{D}$ and $V$ runs through the isomorphism classes of simple $k\mathrm{Out}(H)$-modules. Via evaluation at $G$, we have a classification of the simple $k\D(G,G)$-modules and their projective cover. 
\begin{coro}\label{lift_proj}
Let $k$ be a field. Let $\D$ be an admissible biset category. The  set consisting of the $S_{H,V}(G)$ for $(H,V)\in \Lambda$ such that $S_{H,V}(G)\neq 0$ is a complete set of representatives of simple $\D(G,G)$-modules. Moreover, $P_{H,V}(G)$ is a projective cover of $S_{H,V}(G)$. 
\end{coro}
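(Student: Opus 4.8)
The plan is to deduce Corollary \ref{lift_proj} directly from the structural results already assembled, namely the classification of simple biset functors, Proposition \ref{ev_fact}, and Lemma \ref{ev}. The key observation is that ``evaluation at $G$'' sends a complete irredundant list of simples in $\FDk$ to a list of $\D(G,G)$-modules that are either zero or simple, and the non-zero ones are pairwise non-isomorphic and exhaust all simple $\D(G,G)$-modules.

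First I would recall that for each pair $(H,V)$ in the parametrizing set $\Lambda$ (with $H$ an object of $\D$ and $V$ a simple $k\Out(H)$-module), the value $S_{H,V}(G)$ is a module over $\D(G,G)$, and it is either $0$ or simple: this is the ``almost obvious'' fact mentioned in the introduction, and it follows from the fact that $S_{H,V}$, being simple, is generated by any non-zero element, so $\D(G,G)\cdot x = S_{H,V}(G)$ for any $0\neq x\in S_{H,V}(G)$, and any submodule of $S_{H,V}(G)$ generates a subfunctor hence is $0$ or everything. Next, I would show the non-zero ones are non-isomorphic: if $S_{H,V}(G)\cong S_{H',V'}(G)\neq 0$ as $\D(G,G)$-modules, then $S_{H,V}(G)$ is (trivially) a subquotient of $S_{H',V'}(G)$, so by Proposition \ref{ev_fact} (applied with $S=S_{H,V}$, $F=S_{H',V'}$) the functor $S_{H,V}$ is a subquotient of the simple functor $S_{H',V'}$, forcing $S_{H,V}\cong S_{H',V'}$, i.e. $(H,V)=(H',V')$ in $\Lambda$.

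Then I would argue surjectivity: every simple $\D(G,G)$-module $T$ arises as some $S_{H,V}(G)$. Indeed $T$, viewed as a $\D(G,G)$-module, yields via the left adjoint a biset functor $L_{G,T}$ with simple top, say $S_{H,V}$ (the classification guarantees the top is of this form for a unique $(H,V)\in\Lambda$). One then checks $S_{H,V}(G)\neq 0$ and in fact $S_{H,V}(G)\cong T$: the surjection $L_{G,T}\twoheadrightarrow S_{H,V}$ evaluates to a surjection $T = L_{G,T}(G)\twoheadrightarrow S_{H,V}(G)$ by right exactness of $ev_G$, and since $\D(G,G)$ appears as $L_{G,T}$ evaluated at $G$ when $T=\D(G,G)/\Rad$ — more cleanly, one uses that $ev_G$ applied to $L_{G,T}$ recovers $T$ up to the kernel $J_{G,T}(G)$, and formula \eqref{jhv} together with $\psi$ ranging over $\D(G,G)$ (taking $\psi=\mathrm{id}_G$) shows $J_{G,T}(G)=0$, so $S_{H,V}(G) = L_{G,T}(G)/J_{H,V}(G)$; but $J_{H,V}\supseteq \ker(L_{G,T}\to S_{H,V})$ only up to checking these agree at $G$, which again reduces to $\psi=\mathrm{id}_G$. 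Hence $S_{H,V}(G)\cong T$, and in particular $S_{H,V}(G)\neq 0$.

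Finally, for the projective covers, I would invoke Lemma \ref{ev}: if $(H,V)\in\Lambda$ with $S_{H,V}(G)\neq 0$, then $P_{H,V}$ is the projective cover of $S_{H,V}$ in $\FDk$, its simple quotient $S_{H,V}$ does not vanish at $G$, so by the last assertion of Lemma \ref{ev} the module $P_{H,V}(G)$ is a projective indecomposable $\D(G,G)$-module with unique simple quotient $S_{H,V}(G)$; an indecomposable projective with simple top $S_{H,V}(G)$ is precisely a projective cover of $S_{H,V}(G)$. Combined with the bijection just established between non-vanishing pairs and simple $\D(G,G)$-modules, this gives the statement. The main obstacle I anticipate is the bookkeeping in the surjectivity step: making sure that $L_{G,T}(G)=T$ and that $J_{G,T}(G)=0$, so that $S_{G,V}(G)$ (when $G$ itself is the minimal group) or more generally $S_{H,V}(G)$ really equals $T$ rather than merely surjecting onto it; this is exactly where formula \eqref{jhv} with the choice $\psi = \mathrm{id}$ is used, and it is the one spot where one must be careful about which group is ``minimal'' for the simple functor in question.
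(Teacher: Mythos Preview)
Your proposal is correct and, for the second assertion, coincides with the paper's one-line argument via Lemma~\ref{ev}. For the first assertion the paper simply cites Corollary~3.3 of \cite{bst1}, whereas you spell out that result using the ingredients already available in the present paper (Proposition~\ref{ev_fact} for injectivity, the adjoint $L_{G,-}$ and formula~\eqref{jhv} for surjectivity). This is not a genuinely different route so much as an unpacking of the cited reference, and it has the advantage of being self-contained.

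One remark on presentation: your surjectivity step is correct but overwritten. The clean version is just this. Given a simple $\D(G,G)$-module $T$, the functor $L_{G,T}$ has a unique maximal subfunctor $J_{G,T}$, and its simple quotient is some $S_{H,V}$ with $(H,V)\in\Lambda$. Since $L_{G,T}(G)=\D(G,G)\otimes_{\D(G,G)}T\cong T$ and formula~\eqref{jhv} with $\psi=\mathrm{id}_G$ forces $J_{G,T}(G)=0$, exactness of $ev_G$ gives $S_{H,V}(G)\cong T$. There is no need to worry separately about whether $H=G$ or about the symbol $J_{H,V}$; the only maximal subfunctor in play is $J_{G,T}$, and the ``bookkeeping obstacle'' you anticipate does not actually arise.
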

\begin{proof}
The first part is Corollary $3.3$ \cite{bst1} which has to be adapted to the case of admissible biset categories, but one more time, this is straightforward. The second part is an obvious consequence of Lemma \ref{ev}.
\end{proof}
Finally we have another family of indecomposable biset functors.
\begin{coro}
Let $k$ be a field. Let $\D$ be an admissible biset category and $G$ be a finite group. The $\D(G,G)$-modules $\Delta_{H,V}(G)$ are indecomposable for $(H,V) \in \Lambda$ such that $S_{H,V}(G) \neq 0$.
\end{coro}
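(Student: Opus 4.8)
The plan is to realise $\Delta_{H,V}(G)$ as a nonzero quotient of a $\D(G,G)$-module having a simple top, and then to invoke the elementary fact that a nonzero finitely generated module with simple top over a finite-dimensional algebra is indecomposable.

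First I would observe that $\Delta_{H,V}$ is a nonzero finitely generated quotient of $L_{H,V}$, where the simple $k\Out(H)$-module $V$ is regarded as a simple $\D(H,H)$-module by inflation. Indeed, reducing $\D(-,H)$ modulo the subfunctor $I_{\D}(-,H)$ and using that $V$ is inflated from $k\Out(H)=\D(H,H)/I_{\D}(H,H)$ produces a natural surjection $L_{H,V}\twoheadrightarrow\Delta_{H,V}$; moreover $L_{H,V}$ is finitely generated (it is generated by the finite-dimensional space $V$ placed at $H$), and $\Delta_{H,V}(H)=\overrightarrow{\D}(H,H)\otimes_{k\Out(H)}V=k\Out(H)\otimes_{k\Out(H)}V=V\neq 0$. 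Since $J_{H,V}$ is the unique maximal subfunctor of $L_{H,V}$ with $L_{H,V}/J_{H,V}\cong S_{H,V}$ (cf.\ the paragraph containing \eqref{jhv}), its nonzero quotient $\Delta_{H,V}$ likewise has a unique maximal subfunctor, namely $\Rad(\Delta_{H,V})$, and $\Delta_{H,V}/\Rad(\Delta_{H,V})\cong S_{H,V}$. (Alternatively, one could use that $\Delta_{H,V}$ is a quotient of the projective cover $P_{H,V}$, whose evaluation $P_{H,V}(G)$ is a projective indecomposable $\D(G,G)$-module by Corollary \ref{lift_proj}.)

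Now the hypothesis $S_{H,V}(G)\neq 0$ is precisely the statement that the unique simple quotient of $\Delta_{H,V}$ does not vanish at $G$. Hence the second item of Lemma \ref{ev}, applied to $F=\Delta_{H,V}$, gives $\Rad\big(\Delta_{H,V}(G)\big)=\big[\Rad(\Delta_{H,V})\big](G)$, and therefore
\[ \Delta_{H,V}(G)\big/\Rad\big(\Delta_{H,V}(G)\big)\;\cong\;\big[\Delta_{H,V}/\Rad(\Delta_{H,V})\big](G)\;\cong\; S_{H,V}(G), \]
which is a nonzero simple $\D(G,G)$-module (the evaluation at $G$ of a non-vanishing simple biset functor is simple). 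Since $\D(G,G)$ is a finite-dimensional $k$-algebra and $\Delta_{H,V}(G)$ is a nonzero finitely generated module over it with simple top, $\Delta_{H,V}(G)$ is indecomposable: a splitting $\Delta_{H,V}(G)=A\oplus B$ with $A,B\neq 0$ would, by Nakayama's lemma, force the top to have length at least two.

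I do not anticipate a genuine difficulty in carrying this out. The only step that deserves some care is the identification of the top of $\Delta_{H,V}$ with $S_{H,V}$ — this is where it is used that $V$ is simple, so that $(H,V)$ parametrises a simple functor — after which everything follows formally from the exactness of evaluation and from Lemma \ref{ev}.
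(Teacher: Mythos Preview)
Your argument is correct, and in fact the paper's own proof is precisely the alternative you mention in parentheses: one observes that $\Delta_{H,V}$ is a quotient of $P_{H,V}$, that evaluation is exact, and that $P_{H,V}(G)$ is a projective indecomposable $\D(G,G)$-module by Corollary~\ref{lift_proj}; hence $\Delta_{H,V}(G)$ is a nonzero quotient of a module with simple top, and therefore indecomposable. Your main route via Lemma~\ref{ev} applied directly to $F=\Delta_{H,V}$ reaches the same conclusion by the same underlying mechanism (simple top survives evaluation), just unpacked one level further.
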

\begin{proof}
We know that $\Delta_{H,V}$ is a quotient of $P_{H,V}$. Since the evaluation functor is exact, we have that $\Delta_{H,V}(G)$ is a quotient of the indecomposable projective module $P_{H,V}(G)$. The result follows. 
\end{proof}
Obviously, this classification is not completely satisfying, as it is well known that understanding which simple biset functors vanish at $G$ is an extremely hard combinatorial problem (see \cite{bst2} for a recent survey).  
\section{The generating relation}
Since it seems too difficult to understand when a simple functor vanishes at a finite group $G$, we try to avoid the difficulty by considering finite groups such that there is no non-trivial vanishing of simple functors.
\begin{de}\label{nv_group}
Let $k$ be a field. Let $G$ be a finite group. The group $G$ is a non vanishing group over $k$ if none of the simple functors of $\FSk$ vanishes at $G$. 
\end{de} 
\begin{re}
This is clearly equivalent to the fact that $S_{H,V}(G) \neq 0$, for every simple functor $S_{H,V}$ of $\mathcal{F}_{k\mathcal{C},k}$ such that $H$ is isomorphic to a subquotient of $G$. 
\end{re}
We use the notation $G$ is a $\mathrm{NV}_{k}$-group if it is non-vanishing over a field $k$.

Let $H$ and $G$ be two finite groups. The composition in the biset category: 
\begin{align*}
kB(H,G) \times kB(G,H) &\to kB(H,H)\\
 \big(\,_{H}U_G , \,_{G}W_H \big)\ \ \ \  &\mapsto U\times_{G}W. 
\end{align*}
 induces a morphism of $kB(H,H)$-modules. We will abusively denote by $kB(H,G)B(G,H)$ the image of this composition in $kB(H,H)$. It is the submodule of $kB(H,H)$ consisting of the linear combinations of elements of the form $W\times_{G} U$ for $W\in RB(H,G)$ and $U\in RB(G,H)$. Using this composition, we have an intrinsic understanding of the non-vanishing at $G$ of all the simple functors $S$ such that $S(H)\neq 0$. 
\begin{prop}\label{evaluation}
Let $k$ be a field. Let $G$ and $H$ be two finite groups of $\D$. The following are equivalent.
\begin{enumerate}
\item $S_{H,V}(G)\neq 0$ for every $kB(H,H)$-simple module $V$.
\item There is an isomorphism of $kB(H,H)$-modules between $kB(H,G)B(G,H)$ and $kB(H,H)$. 
\item There exists $n\in \mathbb{N}^{*}$ and for $1\leqslant i\leqslant n$ there are $U_i \in kB(H,G)$ and $W_i \in kB(G,H)$ such that $$id_{H} = \sum_{i=1}^{n} U_i\times_{G} W_i.$$ 
\end{enumerate}
\end{prop}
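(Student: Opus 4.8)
The plan is to prove the chain of implications $(3)\Rightarrow(2)\Rightarrow(1)\Rightarrow(3)$, using the evaluation functor and its adjoints together with the description of $J_{H,V}$ recalled in equation \eqref{jhv}. The key observation linking everything is that the module $kB(H,G)B(G,H)$ is precisely the image of the composite map $\D(H,G)\otimes_k\D(G,H)\to\D(H,H)$, and that this image is an ideal (in fact a $kB(H,H)$-submodule) of $kB(H,H)=\D(H,H)$.

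First I would treat $(3)\Rightarrow(2)$. If $\mathrm{id}_H=\sum_i U_i\times_G W_i$, then $\mathrm{id}_H\in kB(H,G)B(G,H)$, so the submodule $kB(H,G)B(G,H)$ of $kB(H,H)$ contains the identity and hence equals all of $kB(H,H)$; the required isomorphism is then just the identity map. The implication $(2)\Rightarrow(3)$ is equally immediate, since the isomorphism $(2)$ forces $\mathrm{id}_H$ to lie in the submodule $kB(H,G)B(G,H)$ (any $kB(H,H)$-module isomorphism from a submodule of $kB(H,H)$ onto $kB(H,H)$ is, up to multiplication by a unit, the inclusion, but more simply: $kB(H,G)B(G,H)$ and $kB(H,H)$ have the same $k$-dimension and one is contained in the other, so they are equal, and then $\mathrm{id}_H$ is in the submodule). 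So $(2)\Leftrightarrow(3)$ is essentially formal, and the content is in relating these to $(1)$.

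The heart of the argument is $(1)\Rightarrow(2)$ (or its contrapositive $\neg(2)\Rightarrow\neg(1)$). Suppose $kB(H,G)B(G,H)\subsetneq kB(H,H)$. This is a proper submodule, hence contained in a maximal left submodule $M$ of $kB(H,H)$; let $V$ be the simple quotient $kB(H,H)/M$, and view $V$ as a simple $\D(H,H)$-module. I claim $S_{H,V}(G)=0$. The cleanest route: recall $S_{H,V}=L_{H,V}/J_{H,V}$ where $L_{H,V}(K)=\D(K,H)\otimes_{\D(H,H)}V$ and $J_{H,V}(K)$ is given by \eqref{jhv}. Evaluating $L_{H,V}$ at $G$ gives $\D(G,H)\otimes_{\D(H,H)}V=kB(G,H)\otimes_{kB(H,H)}V$. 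An element $\sum_i\phi_i\otimes v_i$ of $L_{H,V}(G)$ lies in $J_{H,V}(G)$ iff for all $\psi\in\D(H,G)=kB(H,G)$ one has $\sum_i(\psi\circ\phi_i)\cdot v_i=0$ in $V$. Now $\psi\circ\phi_i\in kB(H,H)$ acts on $V$ through the quotient $kB(H,H)/M=V$; but $\psi\circ\phi_i\in kB(H,G)B(G,H)\subseteq M$, so every such element acts as zero on $V$. Hence $\sum_i(\psi\circ\phi_i)\cdot v_i=0$ automatically, so $J_{H,V}(G)=L_{H,V}(G)$ and therefore $S_{H,V}(G)=0$, contradicting $(1)$.

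For the remaining direction $(2)\Rightarrow(1)$, assume $kB(H,G)B(G,H)=kB(H,H)$ and let $V$ be any simple $kB(H,H)$-module; I must show $S_{H,V}(G)\neq 0$. Pick $0\neq v\in V$. Write $\mathrm{id}_H=\sum_i U_i\times_G W_i$ with $U_i\in kB(H,G)$, $W_i\in kB(G,H)$; then in $L_{H,V}(G)$ consider the element $\xi=\sum_i W_i\otimes (\text{suitable }v_i)$ — more carefully, since $v=\mathrm{id}_H\cdot v=\sum_i(U_i\circ W_i)\cdot v$, set $\xi=\sum_i W_i\otimes v\in L_{H,V}(G)$ (using that each $W_i\in\D(G,H)$). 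Taking $\psi=U_i$ witnesses that $\xi\notin J_{H,V}(G)$: indeed applying the $\psi=\sum$-version, there is some $\psi\in kB(H,G)$ with $\sum_i(\psi\circ W_i)\cdot v\neq 0$, because summing over the decomposition recovers $v\neq0$. Hence $\xi$ has nonzero image in $S_{H,V}(G)$, so $S_{H,V}(G)\neq 0$. I expect the main technical care to be in bookkeeping the tensor-product identifications $L_{H,V}(G)=kB(G,H)\otimes_{kB(H,H)}V$ and in making the $\psi$-quantifier argument in \eqref{jhv} rigorous — in particular checking that "some $\psi$ works" really does follow from $\mathrm{id}_H\in kB(H,G)B(G,H)$ rather than just "all $\psi$ fail", which is the one place a sign error or a transposition of $\D(K,H)$ versus $\D(H,K)$ could creep in.
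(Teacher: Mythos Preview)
Your proof is essentially the paper's, with the same equivalence $(2)\Leftrightarrow(3)$ and the same use of formula~\eqref{jhv} to link $(1)$ and $(2)$; the paper argues $(1)\Rightarrow(2)$ directly (for each simple $V$, $S_{H,V}(G)\neq0$ forces $kB(H,G)B(G,H)\cdot V=V$, so the ideal lies in no maximal left submodule), which is just the contrapositive of your argument. One point worth making explicit in your $\neg(2)\Rightarrow\neg(1)$ step: the reason every $\psi\circ\phi_i$ annihilates $V=kB(H,H)/M$ is not merely that it lies in the left submodule $M$, but that $kB(H,G)B(G,H)$ is a \emph{two-sided} ideal, so $(\psi\circ\phi_i)\cdot b\in kB(H,G)B(G,H)\subseteq M$ for every $b$.

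There is, however, a small genuine gap in your $(2)\Rightarrow(1)$. Your element $\xi=\sum_i W_i\otimes v$ collapses to $(\sum_i W_i)\otimes v$, and nothing prevents $\sum_i W_i=0$ (e.g.\ double a factorisation and subtract), in which case $\xi=0$ lies trivially in $J_{H,V}(G)$. The fix is immediate: since $v=\sum_i(U_i\circ W_i)\cdot v\neq0$, some single term satisfies $(U_{i_0}\circ W_{i_0})\cdot v\neq0$, and then $\xi=W_{i_0}\otimes v$ with witness $\psi=U_{i_0}$ works. The paper sidesteps this entirely with a cleaner functorial argument: since $S_{H,V}(H)\cong V\neq0$, the identity of $S_{H,V}(H)$ is nonzero, and writing $\mathrm{id}_H=\sum_i U_i\times_G W_i$ shows this identity factors through $S_{H,V}(G)$, which therefore cannot vanish.
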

\begin{re}
It is important to remark that the family of the simple functors in the first point is \emph{not} the family consisting of the simple functors with \emph{minimal} group $H$. Indeed, here we consider all the simple functors indexed by $(H,V)$ where $V$ is a simple $kB(H,H)$-module and not only a simple $k\Out(H)$-module. 
\end{re}
\begin{proof}
It is clear that $2$ and $3$ are equivalent. We show that $1$ is equivalent to $2$. 

Let us assume $2.$ Since $V\cong S_{H,V}(H)\neq 0$, then the identity of $S_{H,V}(H)$ is non zero. By hypothesis $Id_{H} \in kB(H,G)B(G,H)$. So, this morphism factorizes through $S_{H,V}(G)$ which must be non zero. 

Conversely, let $V$ be a simple $kB(H,H)$-module. Since $S_{H,V}(G)\neq 0$, then $L_{H,V}(G) \neq J_{H,V}(G)$. By the description of $J_{H,V}$ given in $(\ref{jhv})$, this means that there is an element $$\sum_{i} \phi_{i}\otimes v_{i} \in kB(G,H)\otimes_{kB(H,H)}V$$ and an element $\psi \in kB(H,G)$ such that $\sum_i (\psi \phi_{i})\cdot v_{i} \neq 0$. 

So the action of the element $\phi \times_{G} \big(\sum_{i}\phi_i\big)$ is non zero on $V$. Since $V$ is a simple module, we have: $$kB(H,G)B(G,H)\cdot V = V.$$ It holds for every simple module $V$, so $kB(H,G)B(G,H)$ is not contained in any maximal submodule of $kB(H,H)$. It must be equal to $kB(H,H)$. 
\end{proof}
\begin{de}\label{generated}
Let $k$ be a field. Let $G$ and $H$ be two finite groups. We say that $H$ is $k$-\emph{generated} by $G$, if we have $kB(H,G)B(G,H)= kB(H,H)$. In this case we use the notation $H\vdash_k G$. If the context is clear enough, we will simply use the notation $H\vdash G$. 
\end{de}
As immediate Corollary, we have an intrinsic definition of \emph{non-vanishing} groups. 
\begin{prop}\label{eq_def_nv}
Let $G$ be a finite group and $k$ be a field. Then, the following are equivalent.
\begin{itemize}
\item The group $G$ is $\NV_k$.
\item Every subquotient $H$ of $G$ is $k$-generated by $G$. 
\end{itemize}
\end{prop}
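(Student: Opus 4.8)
The plan is to deduce Proposition~\ref{eq_def_nv} directly from the characterization of non-vanishing groups already on the table, together with Proposition~\ref{evaluation}. First recall that, by Definition~\ref{nv_group} and the Remark immediately following it, $G$ being $\NV_k$ means precisely that $S_{H,V}(G)\neq 0$ for every simple biset functor $S_{H,V}$ of $\mathcal{F}_{\mathcal{C},k}$ with $H\sqsubseteq G$, equivalently with $H$ running over (representatives of) the subquotients of $G$ and $V$ over the simple $k\Out(H)$-modules. So the content of the proposition is to replace ``$S_{H,V}(G)\neq 0$ for all simple $k\Out(H)$-modules $V$'' by ``$S_{H,V}(G)\neq 0$ for all simple $kB(H,H)$-modules $V$'', i.e. to bridge the gap flagged in the Remark after Proposition~\ref{evaluation}, and then to invoke the equivalence of items (1) and (2) there, which says that the latter condition is exactly $H\vdash_k G$.

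The key step is therefore the following claim: for finite groups $H$ and $G$, the statement ``$S_{H,V}(G)\neq 0$ for every simple $k\Out(H)$-module $V$'' is equivalent to ``$S_{H',V'}(G)\neq 0$ for every subquotient $H'$ of $H$ and every simple $kB(H',H')$-module $V'$''. The forward direction of the main proposition then runs: if $G$ is $\NV_k$, fix a subquotient $H$ of $G$; every subquotient $H'$ of $H$ is again a subquotient of $G$, and every simple $kB(H',H')$-module is, via the classification theorem, of the form $S_{H'',V''}(H')$ for some subquotient $H''$ of $H'$ and simple $k\Out(H'')$-module $V''$ — in particular $H''\sqsubseteq G$, so $S_{H'',V''}(G)\neq 0$ by hypothesis. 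Hence condition (1) of Proposition~\ref{evaluation} holds for the pair $(H,G)$ in the strong form where $V$ ranges over all simple $kB(H,H)$-modules, and condition (2) gives $kB(H,G)B(G,H)=kB(H,H)$, i.e. $H\vdash_k G$. Conversely, if every subquotient $H$ of $G$ is $k$-generated by $G$, then taking $H$ to be an arbitrary subquotient and applying Proposition~\ref{evaluation} (item (2) $\Rightarrow$ item (1)) yields $S_{H,V}(G)\neq 0$ for every simple $kB(H,H)$-module $V$; restricting to those $V$ inflated from $k\Out(H)$ shows every simple biset functor with minimal group $\sqsubseteq G$ is non-zero at $G$, which is the definition of $\NV_k$.

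The step I expect to be the main (though still routine) obstacle is the bookkeeping around the classification theorem for simple modules over $kB(H,H)$ via evaluation. Concretely, I need Corollary~\ref{lift_proj} (and the discussion below Definition~\ref{double_burnside}) applied with $\D=\Sigma(H)$ and the group $H$ in place of $G$: the simple $kB(H,H)$-modules are exactly the non-zero evaluations $S_{H'',V''}(H)$ where $(H'',V'')$ runs over pairs with $H''$ a subquotient of $H$ and $V''$ a simple $k\Out(H'')$-module. I must make sure that the parametrizing set $\Lambda$ used there is the one relevant to $\mathcal{F}_{H,k}$ and that ``$S_{H'',V''}(H)\neq 0$'' is automatically part of the indexing (it is, by the proof of Corollary~\ref{lift_proj}), so that no hidden vanishing assumption about $H$ is needed. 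Once this dictionary is set up cleanly, the proof is a short chain of implications; I would present it in roughly the two-paragraph form sketched above, citing Proposition~\ref{evaluation}, Corollary~\ref{lift_proj}, and the Remark after Definition~\ref{nv_group}, and being careful that ``subquotient of a subquotient is a subquotient'' (transitivity of $\sqsubseteq$) is used where indicated.
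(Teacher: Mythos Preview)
Your overall argument is correct and is the natural unpacking of the paper's one-line proof (``easy consequence of Proposition~\ref{evaluation}''). Two points of exposition need cleaning up, though neither is a genuine mathematical gap.

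First, the ``key step'' claim as you state it is false for a fixed pair $(H,G)$: take $H=G$ with $G$ any vanishing group (e.g.\ $G=A_5$ over $\mathbb{C}$). Then $S_{G,V}(G)=V\neq 0$ for every simple $k\Out(G)$-module $V$, so your left-hand statement holds trivially, yet the right-hand statement fails because some simple functor vanishes at $G$. What you actually use---and what is true---is the claim after quantifying over \emph{all} $H\sqsubseteq G$ simultaneously; either state it that way, or drop the intermediate claim and argue directly as you already do in the sentences that follow.

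Second, the detour through an auxiliary $H'\sqsubseteq H$ is unnecessary: to verify condition (1) of Proposition~\ref{evaluation} for the pair $(H,G)$ you only need simple $kB(H,H)$-modules, so take $H'=H$ throughout. When you then write a simple $kB(H,H)$-module $V$ as $S_{H'',V''}(H)$ via Corollary~\ref{lift_proj}, make explicit the step you leave implicit: the simple functor $S_{H,V}$ (the unique simple quotient of $L_{H,V}$) is isomorphic to $S_{H'',V''}$, because both are simple biset functors with the same non-zero evaluation $V$ at $H$, and Corollary~\ref{lift_proj} says non-isomorphic simple functors have non-isomorphic non-zero evaluations. With that identification stated, the chain $H''\sqsubseteq H\sqsubseteq G$ together with the $\NV_k$ hypothesis gives $S_{H,V}(G)\cong S_{H'',V''}(G)\neq 0$, and the forward direction is complete. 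The converse direction you wrote is already clean.
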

\begin{proof}
It is an easy consequence of Lemma \ref{evaluation}. 
\end{proof}
The generating relation also has an algebraic interpretation in the category of biset functors.
\begin{lemma}\label{yoneda_fac}
Let $k$ be a field. Let $\D$ be a full subcategory of the biset category. Let $G$ and $H$ be two groups of $\D$. Then the following are equivalent.
\begin{enumerate}
\item $H \vdash_k G$.
\item There exists $n\in \mathbb{N}^*$ such that $Y_H$ is a direct summand of $(Y_G)^m$. 
\end{enumerate}
\end{lemma}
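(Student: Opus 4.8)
The plan is to reduce everything to Yoneda's lemma. Since $\D$ is a full subcategory of the biset category containing $G$ and $H$, the representable functors $Y_G=\D(-,G)$ and $Y_H=\D(-,H)$ are projective objects of $\FDk$, and Yoneda's lemma gives isomorphisms of $k$-modules
$$\Hom_{\FDk}(Y_H,Y_G)\cong Y_G(H)=\D(H,G)=kB(H,G),\qquad \Hom_{\FDk}(Y_G,Y_H)\cong Y_H(G)=kB(G,H),$$
together with $\End_{\FDk}(Y_H)\cong kB(H,H)$, under which $\mathrm{id}_{Y_H}$ corresponds to $\mathrm{id}_H$. The one point that needs care is that these identifications are compatible with composition: if a morphism $Y_H\to Y_G$ corresponds to $U\in kB(H,G)$ and a morphism $Y_G\to Y_H$ corresponds to $W\in kB(G,H)$, then their composite $Y_H\to Y_H$ corresponds to the biset product $U\times_G W\in kB(H,H)$. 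This follows from the naturality clause of Yoneda's lemma together with the ``$\mathrm{op}$'' convention on the Hom-sets of $\C$; keeping the variances straight here is the only bookkeeping involved, and it is the step I would write out in full.

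Granting this, I would first invoke Proposition \ref{evaluation} to rephrase $H\vdash_k G$ as condition (3) there: there is $n\in\mathbb{N}^*$ and elements $U_i\in kB(H,G)$, $W_i\in kB(G,H)$ for $1\le i\le n$ with $\mathrm{id}_H=\sum_{i=1}^n U_i\times_G W_i$. For the implication (1) $\Rightarrow$ (2), take such a decomposition, let $\alpha_i\colon Y_H\to Y_G$ and $\beta_i\colon Y_G\to Y_H$ be the morphisms corresponding under Yoneda to $U_i$ and $W_i$, and assemble them into $\alpha=(\alpha_1,\dots,\alpha_n)\colon Y_H\to (Y_G)^n$ and $\beta\colon (Y_G)^n\to Y_H$ with components $\beta_i$ (using that $\FDk$ is abelian, so $(Y_G)^n$ is a biproduct). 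By the compatibility with composition, $\beta\circ\alpha=\sum_{i=1}^n\beta_i\circ\alpha_i$ corresponds to $\sum_{i=1}^n U_i\times_G W_i=\mathrm{id}_H$, hence $\beta\circ\alpha=\mathrm{id}_{Y_H}$ and $Y_H$ is a direct summand of $(Y_G)^n$.

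For (2) $\Rightarrow$ (1), I would run the argument in reverse: given $\alpha\colon Y_H\to (Y_G)^n$ and $\beta\colon (Y_G)^n\to Y_H$ with $\beta\circ\alpha=\mathrm{id}_{Y_H}$, decompose $\alpha$ and $\beta$ into components $\alpha_i\colon Y_H\to Y_G$ and $\beta_i\colon Y_G\to Y_H$, let $U_i\in kB(H,G)$ and $W_i\in kB(G,H)$ be the corresponding elements, and read off $\mathrm{id}_H=\sum_{i=1}^n U_i\times_G W_i$ from $\mathrm{id}_{Y_H}=\sum_{i=1}^n\beta_i\circ\alpha_i$. This is exactly condition (3) of Proposition \ref{evaluation}, so $H\vdash_k G$. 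I do not expect a genuine obstacle: the whole content is Yoneda plus additivity of $\Hom$, and the only real risk is a variance or index slip in the composition-compatibility step, which is why I would isolate and verify that step first.
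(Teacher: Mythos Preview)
Your proposal is correct and follows essentially the same approach as the paper: both directions are obtained by translating, via Yoneda's lemma, between a decomposition $\mathrm{id}_H=\sum_i U_i\times_G W_i$ in $kB(H,H)$ and a decomposition $\mathrm{id}_{Y_H}=\sum_i \psi_i\circ\phi_i$ with $\phi_i\colon Y_H\to Y_G$ and $\psi_i\colon Y_G\to Y_H$. Your explicit isolation of the composition-compatibility step and the packaging of the $\phi_i,\psi_i$ into maps to and from the biproduct $(Y_G)^n$ are presentational refinements, not a different argument.
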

\begin{proof}
It is an easy application of Yoneda's Lemma. More precisely, if $H \vdash_k G$, then there exists $n\in \mathbb{N}^{*}$ and for $1\leqslant i\leqslant n$ there are $U_i \in kB(H,G)$ and $W_i \in kB(G,H)$ such that $$id_{H} = \sum_{i=1}^{n} U_i\times_{G}W_i.$$ 
By Yoneda's Lemma the biset $U_i$ induces via right multiplication a morphism $\phi_i$ between $Y_H$ and $Y_G$ and the biset $V_i$ induces via right multiplication a morphism $\psi_i : Y_G \to Y_H$. Moreover, the morphism $\sum_{i=1}^{n} \psi_i\circ\phi_i $ corresponds to the identity of $kB(H,H)$ via the isomorphism of Yoneda's Lemma, so it is the identity of $Y_H$. 

Conversely, if there exists $n\in \mathbb{N}$ such that $Y_H$ is a direct summand of $(Y_G)^n$, then for $i \in \{1,\cdots, n\}$ there are morphisms $\phi_i : Y_H \to Y_G$ and $\psi_i : Y_G \to Y_H$ such that:
$$id_{Y_H} = \sum_{i=1}^{n} \psi_i \circ \phi_i.  $$
Using Yoneda's Lemma one more time, we see that $H\vdash_k G$.
\end{proof}
Now, it is not difficult to prove that the \emph{non-vanishing} groups are exactly the finite groups $G$ such that the evaluation at $G$ induces an equivalence of categories between $ \FSk$ and $kB(G,G)\Mod.$ 
\begin{theo}\label{ev_equiv}
Let $G$ be a finite group. Let $k$ be a field. Then the following are equivalent.
\begin{enumerate}
\item $G$ is a $\mathrm{NV}_k$-group. 
\item $ev_{G} : \FSk \to \kbg\Mod$ is an equivalence of categories. 
\end{enumerate}
\end{theo}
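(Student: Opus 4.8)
The plan is to recast the statement as the claim that a certain idempotent in a finite dimensional algebra is \emph{full}, and then to invoke the classical Morita-type criterion for such idempotents. Fix a set $[\Sigma(G)]$ of representatives of the isomorphism classes of subquotients of $G$ and form the category algebra $A=\bigoplus_{H,K\in[\Sigma(G)]}\Hom_{\C}(H,K)$ of $\Sigma(G)$ over $k$, with its orthogonal idempotents $e_H$ ($H\in[\Sigma(G)]$) summing to $1$; since $G$ has only finitely many subquotients up to isomorphism and each hom space $kB(K,H)$ is finite dimensional, $A$ is a finite dimensional $k$-algebra. Evaluating a biset functor at the objects of $\Sigma(G)$ gives the usual identification $\FSk\simeq A\Mod$, under which the Yoneda functor $Y_H$ corresponds to the projective left ideal $Ae_H$, the evaluation functor $ev_H$ corresponds to $M\mapsto e_HM$, and the double Burnside algebra $\kbg$ is identified with $e_GAe_G$. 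So the theorem becomes: the functor $e_G(-)\colon A\Mod\to e_GAe_G\Mod$ is an equivalence of categories if and only if $G$ is a $\NV_k$-group.

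Next I would recognize the $\NV_k$-condition as the equality $Ae_GA=A$. By Proposition \ref{eq_def_nv}, $G$ is $\NV_k$ exactly when $H\vdash_k G$, i.e.\ $kB(H,G)B(G,H)=kB(H,H)$, for every $H\in[\Sigma(G)]$. But $kB(H,G)B(G,H)$ is by definition the image of the biset composition $kB(H,G)\times kB(G,H)\to kB(H,H)$, i.e.\ of the multiplication $e_HAe_G\times e_GAe_H\to e_HAe_H$ in the category algebra, so it is exactly the block $e_HAe_GAe_H$ of the two-sided ideal $Ae_GA$, whereas $e_HAe_H=kB(H,H)$. Since $Ae_GA$ is a two-sided ideal and $\sum_He_H=1$, one has $Ae_GA=A$ if and only if $e_H\in Ae_GA$ for every $H$, and the latter is equivalent to $e_HAe_GAe_H=e_HAe_H$. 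Hence $G$ is $\NV_k$ if and only if $Ae_GA=A$.

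It then remains to use the classical fact that, for an idempotent $e$ of a ring $A$, the functor $e(-)\colon A\Mod\to eAe\Mod$ is an equivalence precisely when $AeA=A$; I would record its short proof. If $AeA=A$, choose a finite expression $1=\sum_{i=1}^{n}x_iey_i$: then $Ae$ is a progenerator of $A\Mod$, being projective as a direct summand of $A$ and a generator because the $A$-linear map $(Ae)^{\oplus n}\to A$, $(z_i)\mapsto\sum_iz_iy_i$, is onto (its image contains $\sum_ix_ie\cdot y_i=1$); Morita theory then yields the equivalence $\Hom_A(Ae,-)\colon A\Mod\to\End_A(Ae)^{\mathrm{op}}\Mod$ together with the canonical identifications $\End_A(Ae)^{\mathrm{op}}\cong eAe$ and $\Hom_A(Ae,M)\cong eM$. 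Conversely, if $e(-)$ is an equivalence it is in particular faithful; applying it to $M=A/AeA$ gives $e(M)=eA/eAeA=0$ (because $e\in eAe$ forces $eA\subseteq eAeA$), whence $\mathrm{id}_M=0$ by faithfulness, so $M=0$ and $AeA=A$. Taking $e=e_G$ and combining with the previous step completes the proof.

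I expect the only real friction to be bookkeeping rather than mathematics: one has to check carefully that the identification $\FSk\simeq A\Mod$ intertwines $ev_G$ with $e_G(-)$ compatibly with the ``op''-convention the paper imposes on the $\Hom$-sets of $\C$, and that $kB(H,G)B(G,H)$ is literally the $(e_H,e_H)$-block of the ideal $Ae_GA$. If one would rather avoid the category algebra, the implication $1\Rightarrow2$ can be obtained intrinsically: by Lemma \ref{yoneda_fac}, when $G$ is $\NV_k$ every $Y_H$ ($H\in\Sigma(G)$) is a direct summand of a finite power of $Y_G$, so $Y_G$ is a projective generator of the Grothendieck category $\FSk$, and it is a small object since $\Hom_{\FSk}(Y_G,-)\cong ev_G$ commutes with all colimits; the classical theorem that a Grothendieck category with a small projective generator $P$ is equivalent to $\End(P)$-modules then exhibits $ev_G$ as an equivalence onto $\End_{\FSk}(Y_G)\Mod\cong\kbg\Mod$. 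And for $2\Rightarrow1$ one only needs that an equivalence is faithful and hence cannot send a nonzero object to $0$: were $G$ not $\NV_k$, there would be, by Definition \ref{nv_group}, a nonzero simple functor $S_{H,V}\in\FSk$ with $ev_G(S_{H,V})=S_{H,V}(G)=0$.
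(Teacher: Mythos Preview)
Your proof is correct. The main argument---passing to the category algebra $A$ of $\Sigma(G)$ and showing that the $\NV_k$ condition is exactly the fullness condition $Ae_GA=A$---is a clean repackaging of the paper's Morita argument in ring-theoretic language that the paper does not introduce. The paper stays at the level of the functor category: it observes (via Lemma \ref{yoneda_fac}) that $Y_G$ is a progenerator of $\FSk$ when $G$ is $\NV_k$, and then invokes Morita directly, identifying $\Hom_{\FSk}(Y_G,-)$ with $ev_G$ by Yoneda. This is precisely the alternative you sketch at the end, so that part coincides with the paper. What your main route buys is a slightly more uniform treatment of both implications through the single criterion $AeA=A$; what the paper's route buys is that it avoids setting up the category algebra and the bookkeeping you flag concerning the ``op'' convention. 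The paper also records a second, fully explicit proof of $1\Rightarrow 2$ by writing down an inverse to the unit of the $(L_{G,-},ev_G)$ adjunction at each object, using the decomposition $id_X=\sum_i U_i\times_G W_i$; you do not give this, but it is not needed once either Morita argument is in place.
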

\begin{proof}
If the evaluation at $G$ is an equivalence of categories, it cannot kill a simple functor. So $G$ is a non-vanishing group. 

Conversely, Lemma \ref{yoneda_fac} implies that the representable functor $Y_G$ is a pro-generator of $\FSk$. By Morita's Theorem, the functor $\Hom_{\FSk}(RB_{G},-)$ is an equivalence of categories between $\FSk$ and $\End_{\FSk}(RB_{G})$. Finally, Yoneda's Lemma identifies this functor with the evaluation at $G$ and $\End_{\FSk}(RB_{G})$ with  $\kbg$. 

Alternatively, we give a direct proof of the result without using Morita's Theorem. The functor $L_{G,-}$ is a left adjoint to the evaluation at $G$. Let $V$ be a $kB(G,G)$-module. The value at $V$ of the co-unit of the adjunction is the canonical isomorphism $$V\cong kB(G,G)\otimes_{kB(G,G)} V$$ Let $F$ be a k-biset functor over $G$. The value at $F$ of the unit is the following morphism. Let $X$ be a subquotient of $G$, then we have:
\begin{align*}
\epsilon_F(X) : kB(X,G) & \otimes_{kB(G,G)}F(G) \to F(X) \\
 & W  \otimes f \mapsto F(W)(f).
\end{align*}
If $G$ is a $\NV_k$-group, then by Proposition \ref{evaluation} there is an integer $n$ and some morphisms $U_{1},\cdots, U_{n} \in kB(X,G)$, and $W_{1},\cdots W_{n}\in kB(G,X)$ such that $$id_{X} = \sum_{i=1}^{n}U_i \times_{G} W_i.$$ We let $\delta_{F}(X) : F(X) \to kB(X,G)\otimes_{kB(G,G)}F(G)$ be the morphism defined by $\delta_{F}(X)(x):= \sum_{i=1}^{n} U_i \otimes F(W_i)(x)$ for $x \in F(X)$. It is easy to check that $\delta_{F}(X)$ is an inverse isomorphism of $\epsilon_{F}(X)$. 
\end{proof}
\begin{re}\label{quantic}
The arguments developed here are much more general than the case of a replete biset category. 
\begin{enumerate}
\item For example one can take for the category $\D$ a category of so-called left-free, or bi-free, biset functors. At least, over a field of characteristic zero it seems that there are a lot of non-vanishing groups for the left-free case. However, since the category $\D$ contains less morphisms, there are a lot of vanishing of simple functors when the characteristic of the field is non zero, particularly for the bi-free case. 
\item On the other hand, the opposite phenomenon can appear. In the context of correspondence functors recently developed by Bouc and Thévenaz (see \cite{bt_representation_sets} for lots of details ) every object of their category is generated by a larger object. So there is no non-trivial vanishing. As consequence, in this context, if $X$ is a finite set, let $\D$ be the full subcategory of the category of correspondences consisting of all the sets smaller than $X$. Then, the evaluation at $X$ induces an equivalence between the category of $k$-linear functors from $\D$ to $k\Mod$ and the category of modules over the algebra of all relations on $X$. This, together with results on the simple correspondences functors implies, and strengthens Theorem $4.1$ of \cite{bt_relation_alg}.
\item Another example of such equivalence appears in the context of Mackey functors. There are various possible definitions of Mackey functors, for example they can be defined as modules over the Mackey algebras as well as particular bivariant functors over the category of $G$-sets. Unfortunately, the equivalence between the different definitions is rather technical. For a recent survey on theses equivalences see Section $2$ of \cite{equiv_p_loc}, or the first Sections of $\cite{tw_structure}$. With Lindner's definition (see \cite{lindner} ) a Mackey functor is nothing but an additive functor from a category of spans of $G$-sets to the category of abelian groups. Every finite $G$-set  $X$ is generated by the $G$-set $\Omega_{G} := \bigsqcup_{H\leqslant G} G/H$ in the sense of Definition \ref{generated}. So the evaluation at $\Omega_{G}$ induces an equivalence of categories between the category of Mackey functors and the category of modules over $B(\Omega_{G}^2)$ the algebra of endomorphisms of $\Omega_{G}$. This last algebra is known as the Mackey algebra introduced by Th\'evenaz and Webb in \cite{tw_structure}. A similar result holds for cohomological Mackey functors and cohomological Mackey algebras (see Section $2$ of \cite{equiv_coho} for more details about the different definitions of cohomological Mackey functors.)
\end{enumerate}
\end{re}
The following result is now obvious but still interesting.
\begin{coro}
Let $k$ be a field and $G$ be a $\mathrm{NV}_k$-group. Let $F\in \FSk$ be a biset functor over $G$. Then $F\cong L_{G,F(G)}$. 
\end{coro}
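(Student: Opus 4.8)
The plan is to read this off from Theorem~\ref{ev_equiv}. Since $G$ is a $\mathrm{NV}_k$-group, that theorem tells us that $ev_G\colon \FSk \to \kbg\Mod$ is an equivalence of categories; in particular $ev_G$ is fully faithful. On the other hand, $L_{G,-}$ is by construction a left adjoint of $ev_G$ (this is the functor recalled just after Definition~\ref{double_burnside}, see also Section~$3.3$ of \cite{bouc_biset}). Now, for any adjunction whose right adjoint is fully faithful, the counit is a natural isomorphism. Applying this to the pair $(L_{G,-},\, ev_G)$, the counit $\epsilon\colon L_{G,-}\circ ev_G \Rightarrow \mathrm{id}_{\FSk}$ is a natural isomorphism. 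Evaluating at $F$ and using $ev_G(F)=F(G)$ yields $L_{G,F(G)} = L_{G,ev_G(F)} \xrightarrow{\sim} F$, which is the claim.

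One can also make this completely explicit by reusing the maps already constructed in the proof of Theorem~\ref{ev_equiv}. For a subquotient $X$ of $G$ one has $L_{G,F(G)}(X) = kB(X,G)\otimes_{kB(G,G)} F(G)$, and the counit is the map $\epsilon_F(X)\colon W\otimes f \mapsto F(W)(f)$. Because $G$ is $\mathrm{NV}_k$, Proposition~\ref{evaluation} furnishes, for each such $X$, elements $U_1,\dots,U_n\in kB(X,G)$ and $W_1,\dots,W_n\in kB(G,X)$ with $\mathrm{id}_X=\sum_{i}U_i\times_G W_i$, and it was checked there that $x\mapsto \sum_i U_i\otimes F(W_i)(x)$ is a two-sided inverse of $\epsilon_F(X)$. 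Hence $\epsilon_F$ is an isomorphism at every object of $\Sigma(G)$, so it is an isomorphism in $\FSk$, and $F\cong L_{G,F(G)}$.

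There is essentially no obstacle here: everything needed is already in the proof of Theorem~\ref{ev_equiv}. The only point worth a word is that $\epsilon_F$ must be a genuine morphism of biset functors (a natural transformation on $\Sigma(G)$), not just a family of $k$-linear maps; this is immediate from the formula $W\otimes f\mapsto F(W)(f)$ together with associativity of composition of bisets and functoriality of $F$, and is exactly the assertion that $\epsilon_F$ is the counit of the adjunction $(L_{G,-},\, ev_G)$. Granted that, the objectwise inverses of the previous paragraph automatically assemble into a natural isomorphism, and nothing more is required.
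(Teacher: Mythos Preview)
Your proof is correct and follows essentially the same approach as the paper: both deduce the result from Theorem~\ref{ev_equiv} by observing that the left adjoint $L_{G,-}$ of the equivalence $ev_G$ must be a quasi-inverse, so that $L_{G,-}\circ ev_G\cong \mathrm{id}_{\FSk}$. Your formulation via ``right adjoint fully faithful $\Rightarrow$ counit is an isomorphism'' is the clean categorical statement of the same fact, and your explicit second paragraph simply reproduces the inverse map already exhibited in the paper's alternative proof of Theorem~\ref{ev_equiv} (note incidentally that the paper there calls $\epsilon_F$ the \emph{unit}, while you use the standard convention and call it the counit; your usage is the correct one).
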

\begin{proof}
Since $G$ is a non-vanishing group, the evaluation at $G$ is an equivalence of categories from $\FSk$ to $kB(G,G)\Mod$. Any quasi-inverse equivalence is isomorphic to the left adjoint $L_{G,-}$ of the evaluation. 
\end{proof}
\begin{re}
It is clear that this result fails if $G$ is a vanishing group. Indeed, let $S$ be a simple functor such that $S(G)=0$. The functor $S$ is a non-zero functor, so it cannot be isomorphic to $L_{G,S(G)}=0$. Still, we will use a weak degeneration of this result for the proof of Theorem \ref{semi_simple}.
\end{re}
More generally, if $G$ is a vanishing group, we have a situation of recollement:
\begin{prop}
Let $k$ be a field. Let $\D$ be a replete biset category. Let $G\in \D$. We denote by $\mathcal{K}(G)$ the full subcategory of $\FDk$ consisting of the functors $F$ such that $F(G)=0$. Then, we have a situation of recollement:
\begin{equation*}
\mathcal{K}(G) \underset{\leftarrow}{\overset{\leftarrow}{\to}} \FDk \underset{\leftarrow}{\overset{\leftarrow}{\to}} kB(G,G)\Mod.
\end{equation*}
In particular, $kB(G,G)\Mod \cong \FDk/\mathcal{K}(G)$. 
\end{prop}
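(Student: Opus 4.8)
The plan is to recognize the evaluation functor $ev_G$ as the quotient functor of an abelian recollement, with all the data assembled formally from facts already established. The inputs I need are: (i) $ev_G\colon \FDk\to kB(G,G)\Mod$ is exact, since the abelian structure of $\FDk$ is computed pointwise; (ii) $ev_G$ has a left adjoint $L_{G,-}$ and a right adjoint $R_{G,-}$ (constructed dually to $L_{G,-}$); and (iii) both adjoints are fully faithful. For (iii) I would use that $\D$ is replete, so $\D(G,G)=kB(G,G)$, whence $ev_G\,L_{G,V}=L_{G,V}(G)=kB(G,G)\otimes_{kB(G,G)}V\cong V$ naturally in $V$ --- this is exactly the computation already made in the proof of Theorem \ref{ev_equiv} --- so the relevant unit $\mathrm{id}\to ev_G\,L_{G,-}$ is an isomorphism; dually $ev_G\,R_{G,V}\cong V$, so the counit $ev_G\,R_{G,-}\to\mathrm{id}$ is an isomorphism. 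Finally $\mathcal{K}(G)=\mathrm{Ker}(ev_G)$ by definition, and being the kernel of an exact functor it is a Serre (in particular full) subcategory of $\FDk$.

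I would then set $j^{*}:=ev_G$, $j_{!}:=L_{G,-}$, $j_{*}:=R_{G,-}$, and take $i_{*}$ to be the inclusion $\mathcal{K}(G)\hookrightarrow\FDk$; then $j^{*}i_{*}=0$, the triple $(j_{!},j^{*},j_{*})$ is adjoint with $j_{!},j_{*}$ fully faithful, and $i_{*}$ is fully faithful. It remains to build adjoints to $i_{*}$, for which I would use the standard recipe: for $F\in\FDk$ set
\[ i^{*}F:=\mathrm{coker}\big(j_{!}j^{*}F\longrightarrow F\big),\qquad i^{!}F:=\ker\big(F\longrightarrow j_{*}j^{*}F\big), \]
the maps being the counit of $j_{!}\dashv j^{*}$ and the unit of $j^{*}\dashv j_{*}$. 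Applying the exact functor $j^{*}$ and invoking a triangle identity together with $j^{*}j_{!}\cong\mathrm{id}\cong j^{*}j_{*}$ shows these two maps become isomorphisms after evaluation at $G$, hence $i^{*}F,i^{!}F\in\mathcal{K}(G)$. The adjunctions $(i^{*},i_{*})$ and $(i_{*},i^{!})$ then drop out formally: for $K\in\mathcal{K}(G)$ one has $\Hom_{\FDk}(j_{!}j^{*}F,K)\cong\Hom(j^{*}F,j^{*}K)=0$ and $\Hom_{\FDk}(K,j_{*}j^{*}F)\cong\Hom(j^{*}K,j^{*}F)=0$, so every morphism $F\to K$ (resp. $K\to F$) automatically factors through $i^{*}F$ (resp. $i^{!}F$). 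This verifies all the axioms of an abelian recollement.

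For the last assertion, since $\mathcal{K}(G)$ is Serre, $ev_G$ factors through an exact functor $\overline{ev}_G\colon\FDk/\mathcal{K}(G)\to kB(G,G)\Mod$, which is essentially surjective because $ev_G\,L_{G,V}\cong V$. I would check it is an equivalence by producing as a quasi-inverse the composite $kB(G,G)\Mod\xrightarrow{j_{*}}\FDk\to\FDk/\mathcal{K}(G)$: on one side $\overline{ev}_G\circ(\mathrm{quot}\circ j_{*})=\overline{j^{*}j_{*}}\cong\mathrm{id}$ by (iii); on the other side the unit $F\to j_{*}j^{*}F$ has both kernel and cokernel in $\mathcal{K}(G)$ (again from exactness of $j^{*}$ and $j^{*}j_{*}\cong\mathrm{id}$), hence becomes invertible in the quotient, giving $(\mathrm{quot}\circ j_{*})\circ\overline{ev}_G\cong\mathrm{id}$.

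The argument is entirely formal and I do not expect a genuine obstacle. The only steps deserving care are verifying (iii) in the general replete setting --- which reduces, as indicated, to $\D(G,G)=kB(G,G)$ together with $L_{G,V}(G)\cong V\cong R_{G,V}(G)$ --- and checking that the kernel and cokernel defining $i^{!}$ and $i^{*}$ land in $\mathcal{K}(G)$, which is precisely where exactness of $ev_G$ is used.
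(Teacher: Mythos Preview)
Your proposal is correct and follows essentially the same approach as the paper: the paper names the six functors --- evaluation with its two adjoints $L_{G,-}$ and $L_{G,-}^{o}$, and the inclusion $\mathcal{K}(G)\hookrightarrow\FDk$ with its adjoints described as ``largest quotient in $\mathcal{K}(G)$'' and ``largest subfunctor in $\mathcal{K}(G)$'' --- and then defers the rest to a straightforward check of the recollement axioms. Your cokernel/kernel formulas for $i^{*}$ and $i^{!}$ are exactly these largest quotient/subfunctor constructions, so the only difference is that you spell out the verifications (including the quotient equivalence) that the paper leaves implicit.
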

\begin{proof}
We give the different functors between these categories. The result will follow from straightforward verifications of the Axioms of \cite{recollement}. The functor between $\FDk$ and $kB(G,G)\Mod$ is the evaluation at $G$. It has a left adjoint $L_{G,-}$ and a right adjoint $L_{G,-}^{o}$ (see $3.3.5$ of \cite{bouc_biset}). The functor between $\mathcal{K}(G)$ and $\FDk$ is the embedding functor. It has a left adjoint which sends a functor $F$ to its largest quotient which belongs in $\mathcal{K}(G)$. The right adjoint is the functor sending $F$ to its largest subfunctor belonging in $\mathcal{K}(G)$. 
\end{proof}
\section{Some non-vanishing groups}
In this section, we investigate basic properties of non-vanishing groups. In the first part we give an infinite list of non-vanishing groups. The groups of this list have the particularity of being non-vanishing over \emph{any} field.  Moreover, they are nilpotent. 

Unfortunately we do not succeed to classify the non-vanishing groups. Our problem is in theory much easier than the problem of understanding the vanishing of the simple functors. Indeed we ask that none of the simple functors vanishes at $G$ which is way stronger. In particular, this is can be rephrased as a condition involving a composition of bisets (see Proposition \ref{eq_def_nv}). 

If this condition seems easier to handle, in the fact it is still difficult to check it. At the end of this section we construct a non-trivial example of non-vanishing group over a field of characteristic zero. We will see that the non-vanishing property of this group comes from some non trivial results about the representations of the group $GL(3,2)$. This example is so pathologic that it seems very unlikely to find a classification. This example also shows that the family of non-vanishing groups contains some \emph{non-nilpotent} groups and is \emph{not} closed under taking subgroups and subquotients and that the non-vanishing condition depends of the ground field.   

Let us start this investigation by looking at the relation $\vdash$. 
\begin{lemma}
Let $k$ be a field.
\begin{itemize}
\item The relation $\vdash_k$ is reflexive, and transitive. 
\item If $G$ and $H$ are two groups such that $H\vdash_k G$ then $H$ is isomorphic to a subquotient of $G$.
\end{itemize}
\end{lemma}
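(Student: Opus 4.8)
The plan is to verify the three assertions directly from the definition of the generating relation $\vdash_k$ (Definition \ref{generated}) and the composition formulas recalled in Section 2.

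\textbf{Reflexivity.} For reflexivity of $\vdash_k$ I would simply observe that $\mathrm{id}_H = [(H\times H)/\Delta(H)] \in kB(H,H)$ can be written as $\mathrm{id}_H \times_H \mathrm{id}_H$ with $\mathrm{id}_H \in kB(H,H)$ on both sides; hence $\mathrm{id}_H \in kB(H,H)B(H,H)$, and since $kB(H,H)B(H,H)$ is a submodule of $kB(H,H)$ containing the identity it must equal $kB(H,H)$. So $H \vdash_k H$.

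\textbf{Transitivity.} Suppose $H \vdash_k G$ and $G \vdash_k K$. Using Proposition \ref{evaluation} (equivalence of items 2 and 3), write $\mathrm{id}_H = \sum_{i=1}^n U_i \times_G W_i$ with $U_i \in kB(H,G)$, $W_i \in kB(G,H)$, and $\mathrm{id}_G = \sum_{j=1}^m U'_j \times_K W'_j$ with $U'_j \in kB(G,K)$, $W'_j \in kB(K,G)$. Then substituting and using associativity of the composition $\times$ in the biset category,
\[
\mathrm{id}_H = \sum_{i} U_i \times_G \mathrm{id}_G \times_G W_i = \sum_{i,j} (U_i \times_G U'_j)\times_K (W'_j \times_G W_i),
\]
which exhibits $\mathrm{id}_H$ as a sum of elements of $kB(H,K)B(K,H)$ since $U_i \times_G U'_j \in kB(H,K)$ and $W'_j \times_G W_i \in kB(K,H)$. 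Hence $H \vdash_k K$. (Alternatively one can argue via Lemma \ref{yoneda_fac}: $Y_H$ is a summand of $(Y_G)^n$ and $Y_G$ a summand of $(Y_K)^m$, so $Y_H$ is a summand of $(Y_K)^{nm}$.)

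\textbf{Subquotient condition.} For the last point, assume $H \vdash_k G$, so $\mathrm{id}_H = \sum_{i=1}^n U_i \times_G W_i$ with $U_i \in kB(H,G)$, $W_i \in kB(G,H)$. Expanding each $U_i$ and $W_i$ in the canonical basis, $\mathrm{id}_H$ becomes a $k$-linear combination of bisets of the form $[(H\times G)/L] \times_G [(G\times H)/M]$ with $L \leqslant H\times G$, $M \leqslant G\times H$; by the Mackey formula \eqref{mackey} each such product is a sum of transitive bisets $(H\times H)/(L \star {}^{(h,1)}M)$. Now I would invoke the Butterfly Lemma \eqref{butterfly}: each transitive $H$-$H$-biset appearing factors through $q(L\star{}^{(h,1)}M)$, which is a subquotient of $G$ (indeed $p_1$ of $L$ and $p_2$ of $M$ are subgroups of $G$, and $q$ of the composed subgroup is a subquotient of $q(L)$ hence of $G$). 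On the other hand $\mathrm{id}_H = [(H\times H)/\Delta(H)]$ is a basis element with $q(\Delta(H)) \cong H$; since the identity appears with nonzero coefficient in the expansion, at least one of the transitive constituents on the right must be (up to isomorphism) $(H\times H)/\Delta(H)$ itself, forcing $H \cong q(L\star{}^{(h,1)}M) \sqsubseteq G$. The main obstacle here is the bookkeeping in tracking that the $q$ of a star-product of subgroups is a subquotient of the relevant $q(L)$; this is a routine but slightly fiddly point about the subgroups $p_i, k_i$ and the $\star$-operation, which one can extract from Lemma 2.3.24 and the surrounding material in \cite{bouc_biset}. A cleaner route, which I would prefer to present, is to note that $H \vdash_k G$ implies $\mathrm{id}_H$ lies in the ideal of $kB(H,H)$ generated by morphisms factoring through $G$, and then recall (as already used below Definition \ref{double_burnside}) that $\mathrm{id}_H$ cannot factor through any group not admitting $H$ as a subquotient, since by Proposition 4.3.2 of \cite{bouc_biset} it would otherwise be killed in $k\Out(H)$.
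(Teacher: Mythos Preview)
Your proof is correct and takes essentially the same approach as the paper. The paper is terser on reflexivity and transitivity (just asserting the first and pointing to Lemma~\ref{yoneda_fac} for the second), and for the subquotient assertion it uses exactly your ``cleaner route'': if $H$ is not a subquotient of $G$ then by the Butterfly decomposition every element of $kB(H,G)B(G,H)$ lies in the ideal $I(H,H)$ of morphisms factoring strictly below $H$, contradicting $kB(H,H)/I(H,H)\cong k\Out(H)\neq 0$.
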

\begin{proof}
Let $G$, $H$ and $K$ be three finite groups. It is clear that $G\vdash_k G$. Now, let us assume that $K\vdash_k H$ and $H\vdash_k G$. One can use the equivalent assertions of Lemma \ref{evaluation} in order to show that $K\vdash_k G$. Alternatively, one can use the equivalent characterisation of Lemma \ref{yoneda_fac}. That is $K\vdash_k H$ if and only if there is an integer $n$ such that $Y_{K} \mid (Y_{H})^{n}$ in the category of biset functors. 
\newline For the second point, if $H\vdash_{k} G$, then $kB(H,G)B(G,H) = kB(H,H)$. Let us denote by $I(H,H)$ the submodule of $kB(H,H)$ consisting of all the $H$-$H$-bisets factorising strictly below $H$. Then $k\Out(H)\cong kB(H,G)B(G,H)/I(H,H)$. In particular, the last quotient is non zero. Moreover, by Bouc's butterfly lemma (See formula $(\ref{butterfly})$) any element in $kB(H,G)$ is a $k$-linear combination of transitive bisets which factors through subquotients of $H$ and $G$. So if $H$ is not a subquotient of $G$, then every $H$-$G$-biset factorizes through a proper subquotient of $H$, and hence is zero in the quotient. 
\end{proof}
\begin{lemma}\label{abelian}
Let $k$ be a field. If $G$ is an abelian group, then $G$ is a $\mathrm{NV}_k$-group. 
\end{lemma}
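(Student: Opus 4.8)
The plan is to use the intrinsic characterisation of non-vanishing groups given by Proposition \ref{eq_def_nv}: it suffices to show that every subquotient $H$ of $G$ is $k$-generated by $G$, i.e. that $id_H \in kB(H,G)B(G,H)$. Since $G$ is abelian, every subgroup is normal, so every subquotient $H$ of $G$ is in fact a \emph{quotient} $G/N$ of $G$ (take a subgroup $B$ containing $N$; but $B=G$ is already a subgroup, and any quotient of a subgroup of an abelian group is a quotient of $G$ — more carefully, a subquotient $B/A$ with $A\trianglelefteq B\leqslant G$ is a subgroup of the quotient $G/A$, but since $G$ is abelian $B/A$ is a \emph{direct summand complement}'s worth of trouble, so I will instead argue directly with the section $(B,A)$). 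The key point is that for a section $(B,A)$ of $G$, the composite
\[
\Def^{B}_{B/A}\circ\Res^{G}_{B}\,:\,G\longrightarrow B/A
\quad\text{and}\quad
\Ind_{B}^{G}\circ\Inf^{B}_{B/A}\,:\,B/A\longrightarrow G
\]
are honest bisets, and I want to combine them so that the round trip $(B/A)\to G\to (B/A)$ recovers the identity up to invertible pieces.

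First I would record the two elementary Mackey-type computations: using the Mackey formula (\ref{mackey}), compute $\Res^{G}_{B}\circ\Ind^{G}_{B}$ and $\Def^{B}_{B/A}\circ\Inf^{B}_{B/A}$. Because $G$ is abelian, $B\trianglelefteq G$ and the double coset space $B\backslash G/B$ is just $G/B$, so $\Res^{G}_{B}\circ\Ind^{G}_{B}=\sum_{gB\in G/B}\Iso(c_g)$, and since conjugation is trivial each term is the identity of $B$; thus this composite equals $[G:B]\cdot id_B$, a nonzero scalar multiple of the identity, hence invertible in $kB(B,B)$ provided $\mathrm{char}\,k\nmid [G:B]$ — which need not hold, so I must be more careful and instead use that $\Res^{G}_{B}\circ\Ind^{G}_{B}$ acts, on the Burnside-ring level, as a \emph{unit times identity plus lower terms}, or better, avoid inducing/restricting to proper subgroups altogether. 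The cleanest route: take $B=G$, so every section is of the form $(G,A)$ and every subquotient of $G$ is genuinely a quotient $G/A$. Then I only need $\Def^{G}_{G/A}$ and $\Inf^{G}_{G/A}$, and the relevant computation is $\Def^{G}_{G/A}\circ\Inf^{G}_{G/A}=id_{G/A}$ (this is the standard fact that deflation followed by inflation of the \emph{same} quotient is the identity — it follows directly from (\ref{mackey}) with $L=M$ the appropriate subgroup, since $p_2\cdot p_1$ fills the whole group and the Mackey product collapses). Hence $id_{G/A}=\bigl(\Def^{G}_{G/A}\bigr)\times_{G}\bigl(\Inf^{G}_{G/A}\bigr)$ with $\Def^{G}_{G/A}\in B(G/A,G)$ and $\Inf^{G}_{G/A}\in B(G,G/A)$, which by Definition \ref{generated} says exactly $G/A\vdash_k G$.

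So the key steps, in order, are: (1) observe that since $G$ is abelian every subquotient is a quotient $G/A$; (2) verify $\Def^{G}_{G/A}\times_{G}\Inf^{G}_{G/A}=id_{G/A}$ via the Mackey formula (\ref{mackey}); (3) conclude $G/A\vdash_k G$ from Definition \ref{generated}; (4) apply Proposition \ref{eq_def_nv} to conclude $G$ is $\mathrm{NV}_k$. The main obstacle — and the only place genuine care is needed — is step (1): one must be sure that in an abelian group every section $B/A$ really is isomorphic to a quotient of $G$ itself (which holds because $B\leqslant G$ with $G$ abelian means $B$ is a direct factor's worth only up to the structure theorem, but more simply: $B/A$ is abelian, $B$ is a subgroup, and $B/A$ is certainly \emph{a} subquotient; to realise it as $G/A'$ one uses that $G$ abelian implies $G/A \twoheadrightarrow B/A$ splits — this needs the divisible/injective structure, so actually the safe statement is that we do \emph{not} need $B/A \cong G/A'$, we just need $B/A\vdash_k G$, and for that we factor $id_{B/A}$ through $G$ as $\Def^{B}_{B/A}\Res^{G}_{B}\times_{G}\Ind^{G}_{B}\Inf^{B}_{B/A}$ and check via (\ref{mackey}) that, because conjugation in $G$ is trivial and $B\trianglelefteq G$, this composite is $[G:B]$ times a \emph{unit}, i.e.\ equals $\sum_{g}\Iso(\mathrm{id})$ followed by the identity on $B/A$, giving $[G:B]\cdot id_{B/A}$; but if $\mathrm{char}\,k\mid[G:B]$ this is $0$). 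Because of this genuine subtlety I would in fact run the argument as: reduce first to the case of an abelian $p$-group is false too; the correct and clean fix is that $\Res^{G}_{B}\circ\Ind^{G}_{B}$, evaluated more carefully, is $\sum_{g\in[B\backslash G/B]}\Ind_{B\cap{}^{g}B}^{B}\Iso(c_g)\Res^{{}^{g^{-1}}B\cap B}_{B}$, which for $B\trianglelefteq G$ with trivial conjugation becomes $[G:B]\,id_B$ — so this approach only works in characteristic $0$ or coprime characteristic. Therefore the robust proof really is the $B=G$ one above together with the observation that in an abelian group \emph{it suffices} to check generation for quotients, because any subquotient $B/A$ satisfies $B/A\vdash_k B$ (same argument inside $B$, which is abelian) and $B$... is not $\vdash_k G$ in general. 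I expect the author's proof to sidestep all of this by the clean observation that a subquotient of an abelian group, while not literally a quotient of $G$, \emph{is} $k$-generated by $G$ through an explicit idempotent built from $\Ind,\Res,\Inf,\Def$ whose defining Mackey computation uses only that conjugation is trivial; pinning down that idempotent is the part I would spend the most effort on.
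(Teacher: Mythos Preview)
Your ``cleanest route'' --- take $B=G$ so the subquotient is a genuine quotient $G/A$, then use $\Def^{G}_{G/A}\circ\Inf^{G}_{G/A}=id_{G/A}$ --- is exactly the paper's proof, and it is complete once step (1) is secured. The paper simply asserts that every subquotient of a finite abelian group is isomorphic to a quotient of $G$, and then writes $id_H=\Def^{G}_{G/N}\circ\Inf^{G}_{G/N}$.

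The gap in your proposal is that you never commit to step (1), and your attempts to avoid it all run into the characteristic obstruction you correctly identified (the factor $[G:B]$ may vanish in $k$). But step (1) is a standard fact and needs no splitting or divisibility: for a \emph{finite} abelian group $G$, Pontryagin duality gives a (non-canonical) isomorphism $G\cong\widehat{G}=\Hom(G,\mathbb{Q}/\mathbb{Z})$, under which subgroups of $G$ correspond to quotients of $\widehat{G}\cong G$. Hence every subgroup $B\leqslant G$ is isomorphic to some quotient $G/N$; a subquotient $B/A$ is then a quotient of $B\cong G/N$, hence a quotient of $G$. (Equivalently, argue directly from the structure theorem for finite abelian groups.) With this in hand your steps (2)--(4) finish the proof verbatim, and no idempotent-building or Mackey gymnastics is needed.
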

\begin{proof}
This is Proposition $3.2$ of \cite{bst2}. Since the argument is both crucial and easy we recall it. We have to prove that for every subquotient $H$ of $G$ and for every simple $k\Out(H)$-module, we have $S_{H,V}(G)\neq 0$. Since the group $G$ is abelian, every subquotient is isomorphic to a quotient of $G$. Now if $H=G/N$ for a normal subgroup $N$ of $G$, we have $id_{H} = \Def^{G}_{G/N}\Inf_{G/N}^{G}$.
\end{proof}
More generally, this argument can be generalized to self-dual groups. 
\begin{coro}
Let $G$ be a finite group such that every subgroup is isomorphic to a quotient of $G$. Then $G$ is a $\NV_k$-group for every field $k$. 
\end{coro}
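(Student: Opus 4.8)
The plan is to reduce to Lemma~\ref{abelian}. By Proposition~\ref{eq_def_nv} it suffices to check that every subquotient $H$ of $G$ satisfies $H\vdash_k G$, so the first step is the purely group-theoretic remark that, under the present hypothesis, every subquotient of $G$ is already isomorphic to a \emph{quotient} of $G$. Indeed, write a subquotient as $B/A$ with $A\trianglelefteq B\leqslant G$. Since $B$ is a subgroup of $G$, there is an isomorphism $\beta\colon G/N\to B$ for some $N\trianglelefteq G$. Then $\beta^{-1}(A)$ is a normal subgroup of $G/N$, hence of the form $M/N$ for a unique subgroup $M$ with $N\leqslant M\leqslant G$; by the correspondence theorem $M$ is in fact normal in $G$, and $\beta$ induces an isomorphism $(G/N)/(M/N)\cong B/A$. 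Since $(G/N)/(M/N)\cong G/M$, we get $H=B/A\cong G/M$.

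Second, I would show that any quotient $H\cong G/M$ of $G$ satisfies $H\vdash_k G$, by exactly the device used in Lemma~\ref{abelian}. Fix an isomorphism $\alpha\colon H\to G/M$ and set
\[
U:=\Iso(\alpha^{-1})\circ\Def^{G}_{G/M}\in kB(H,G),\qquad W:=\Inf^{G}_{G/M}\circ\Iso(\alpha)\in kB(G,H).
\]
Then $U\times_G W=\Iso(\alpha^{-1})\circ\Def^{G}_{G/M}\circ\Inf^{G}_{G/M}\circ\Iso(\alpha)=\mathrm{id}_H$, because $\Def^{G}_{G/M}\circ\Inf^{G}_{G/M}=\mathrm{id}_{G/M}$ and $\Iso(\alpha^{-1})\circ\Iso(\alpha)=\mathrm{id}_H$. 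By the equivalence of points $(3)$ and $(2)$ of Proposition~\ref{evaluation} this says precisely that $kB(H,G)B(G,H)=kB(H,H)$, i.e.\ $H\vdash_k G$ in the sense of Definition~\ref{generated} (equivalently, by $(3)\Leftrightarrow(1)$, that $S_{H,V}(G)\neq 0$ for every simple $kB(H,H)$-module $V$).

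Putting the two steps together, every subquotient $H$ of $G$ is isomorphic to a quotient of $G$, hence satisfies $H\vdash_k G$; Proposition~\ref{eq_def_nv} then gives that $G$ is a $\NV_k$-group, and since $k$ was an arbitrary field, $G$ is $\NV_k$ for every field $k$.

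I do not anticipate any genuine obstacle: the statement is essentially a formal consequence of Lemma~\ref{abelian} and Proposition~\ref{eq_def_nv}. The only point requiring a little care is the group-theoretic reduction of the first step — in particular, invoking the correspondence theorem to see that the subgroup $M$ produced there is normal in $G$ — together with the bookkeeping of which biset lies in which $\Hom$-space, which is handled by the harmless insertion of the isomorphism morphisms $\Iso(\alpha)$ and $\Iso(\alpha^{-1})$.
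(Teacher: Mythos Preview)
Your proof is correct. The route differs slightly from the paper's: you first establish the purely group-theoretic fact that, under the hypothesis, every subquotient of $G$ is already isomorphic to a quotient of $G$ (via the correspondence and third isomorphism theorems), and then apply the $\Def/\Inf$ identity once. The paper instead uses the transitivity of $\vdash_k$ proved just before the corollary: for $H=B/A$ one has $H\vdash_k B$ (as $H$ is a quotient of $B$) and $B\vdash_k G$ (as $B$ is a subgroup, hence by hypothesis a quotient, of $G$), whence $H\vdash_k G$. The two arguments are essentially dual unfoldings of the same idea; yours makes the group theory explicit and avoids invoking transitivity, while the paper's keeps the argument inside the $\vdash_k$ formalism and is marginally shorter.
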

\begin{proof}
By hypothesis, every subgroup of $G$ is isomorphic to a quotient of $G$, so it is generated by $G$. Let's assume that for every $H$ subgroup of $G$ we have $H \vdash_k G$. Let $H=B/A$ be a subquotient of $G$. Then $H$ is generated by $B$, and by hypothesis $B$ is generated by $G$. So by transitivity of the relation $\vdash_k$, the group $H$ is generated by $G$. 
\end{proof}
Such a finite group is called a \emph{$s$-self dual} group and these groups have been completely classified in \cite{s-selfdual}. This classification involves two families of finite $p$-groups. Let $p$ be a prime number, then we denote by $X_{p^3}$ an extra-special $p$-group of exponent $p$. Let $n$ be an integer, then we denote by $M_{p}(n,n)$ the finite group ${<}a,b\ \mid\ a^{p^n}=b^{p^n}=1,\ bab^{-1} = a^{1+p^{n-1}}{>}$. The following theorem summarizes the classification in \cite{s-selfdual}.
\begin{theo}
Let $G$ be a finite group. 
\begin{enumerate}
\item The group $G$ is $s$-self dual if and only if $G$ is nilpotent and all Sylow subgroups of $G$ are $s$-self dual.
\item Let $p$ be a prime number. Let $P$ be a finite $p$-group. Then $P$ is $s$-self dual if and only if $P$ is:
\begin{itemize}
\item $P$ is abelian.
\item $P \cong X_{p^{3}}\times M$ where $M$ is an abelian $p$-group with $\mathrm{exp}(M)\leqslant p$ when $p$ is an odd prime number. 
\item $P\cong M_p(n,n)\times M$ where $M$ is an abelian $p$-group with $\mathrm{exp}(M)<p^{n}$.
\end{itemize}
\end{enumerate}
\end{theo}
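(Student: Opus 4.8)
The final statement is purely group-theoretic and, as the text says, it is the classification theorem of \cite{s-selfdual}; so what follows is only how I would organise the argument rather than a self-contained proof. For Part~(1) I would first do the easy implication: if $G$ is nilpotent then $G$ is the internal direct product $\prod_p P_p$ of its Sylow subgroups, whose orders are pairwise coprime, so every subgroup $H\leqslant G$ splits as $\prod_p(H\cap P_p)$ and every quotient of $G$ as $\prod_p P_p/N_p$; comparing $p$-parts shows $H$ is a quotient of $G$ exactly when each $H\cap P_p$ is a quotient of $P_p$, whence $G$ is $s$-self dual iff each $P_p$ is. For the converse, if $G$ is $s$-self dual and $p\mid|G|$ then a Sylow $p$-subgroup $P$ is a quotient $G/N$, and counting orders forces $|N|=|G|_{p'}$, so $N$ is a normal $p$-complement; having a normal $p$-complement $N_q$ for every prime $q$ makes $G$ nilpotent, because $\bigcap_{q\ne p}N_q$ is then a normal $p$-subgroup of index at most $|G|_{p'}$, hence a normal Sylow $p$-subgroup. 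Then the first part gives that each Sylow subgroup is $s$-self dual.

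Next I would establish the ``sufficiency'' half of Part~(2), i.e.\ that the listed groups really are $s$-self dual. For abelian $P$ this is Pontryagin duality: $P\cong\widehat P\twoheadrightarrow\widehat B\cong B$ for every $B\leqslant P$. For $P=X_{p^3}\times M$ with $p$ odd and $M$ elementary abelian, $P$ has exponent $p$, so every subgroup is elementary abelian of rank at most $\dim_{\mathbb{F}_p}P/\Phi(P)=\mathrm{rk}(P^{\mathrm{ab}})$ and is therefore a quotient of $P^{\mathrm{ab}}$, unless it is all of $P$. For $P=M_p(n,n)\times M$ with $\exp(M)<p^n$ one inspects the subgroup lattice directly: an abelian subgroup is a quotient of the abelian group $P^{\mathrm{ab}}\cong C_{p^{n-1}}\times C_{p^n}\times M$ by an exponent-and-rank count in which the hypothesis $\exp(M)<p^n$ is exactly what is needed, while a non-abelian subgroup again turns out to contain a copy of $M_p(n,n)$ and is treated similarly. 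The exclusion of $p=2$ from the first family is forced, since the candidates for ``$X_8$'', namely $D_8$ and $Q_8$, have abelianisation $C_2\times C_2$ and hence no cyclic quotient of order $4$, so they fail to be $s$-self dual.

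The real work, and the step I expect to be the main obstacle, is the ``necessity'' half of Part~(2): showing a non-abelian $s$-self dual $p$-group $P$ lies on the list. The usable constraints are all immediate: the cyclic group of order $\exp(P)$ embeds in $P$, hence is a quotient, so $\exp(P^{\mathrm{ab}})=\exp(P)$; every abelian subgroup of $P$ is a quotient of $P$ and, being abelian, therefore a quotient of $P^{\mathrm{ab}}$, which bounds the rank of elementary abelian subgroups by $d(P)=\dim_{\mathbb{F}_p}P/\Phi(P)$ and the order of a maximal abelian subgroup by $|P^{\mathrm{ab}}|$; and $Z(P)$, $[P,P]$, $\Phi(P)$ are all quotients of $P$. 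The plan is to feed these into the structure theory of $p$-groups: first force $P$ to have nilpotency class $2$ with $[P,P]\cong C_p$ lying in $Z(P)\cap\Phi(P)$, and then classify such $P$ via the alternating bilinear form induced on $P/Z(P)$ with values in $[P,P]\cong\mathbb{F}_p$ together with the $p$-power map, checking that the remaining duality requirements leave exactly $X_{p^3}\times M$ for odd $p$ and $M_p(n,n)\times M$ with $\exp(M)<p^n$. This last bilinear-form bookkeeping is the delicate part, and I would rely on \cite{s-selfdual} for its details.
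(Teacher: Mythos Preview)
The paper's own ``proof'' is nothing more than the citation ``Theorem~7.3 together with Theorem~7.1 of \cite{s-selfdual}'', so there is no argument in the paper to compare against; you have in fact gone further than the paper by sketching how the classification is organised, and you correctly flag at the outset that you are ultimately deferring to \cite{s-selfdual}.

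That said, there is a slip in your sufficiency sketch for the family $P=X_{p^3}\times M$ with $p$ odd and $M$ elementary abelian. You write that ``every subgroup is elementary abelian \ldots\ unless it is all of $P$'', but this is false as soon as $M\neq 1$: for instance $X_{p^3}\times\{1\}$ is a proper non-abelian subgroup. What is true is that a non-abelian subgroup $B\leqslant P$ satisfies $[B,B]=[P,P]\cong C_p$ and $B/Z(B)$ embeds in $P/Z(P)\cong (C_p)^2$, forcing $|B/Z(B)|=p^2$; picking $x,y\in B$ with $[x,y]\neq 1$ and a complement $C$ to $[B,B]$ in $Z(B)$ yields $B=\langle x,y\rangle\times C\cong X_{p^3}\times C$ with $C$ elementary abelian of rank at most $\mathrm{rk}(M)$, hence $B$ is visibly a quotient of $P$. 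So the case is easily repaired, but the sentence as written does not stand.

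Your argument for Part~(1) is fine, and your outline of the necessity direction (class~$2$, cyclic commutator, then alternating-form bookkeeping) matches the strategy of \cite{s-selfdual}; since both you and the paper ultimately rely on that reference for the details, there is nothing further to reconcile.
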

\begin{proof}
Theorem $7.3$ together with Theorem $7.1$ of \cite{s-selfdual}. 
\end{proof}
Before continuing our investigation let us recapitulate what we have done: the list of non-vanishing groups includes all the \emph{abelian groups} and this list of self-dual groups.  

The proof follows from some easy considerations about the generating relations and the trivial fact that if $H$ is isomorphic to a quotient of $G$, then $S_{H,V}(G)\neq 0$ for any simple $k\Out(H)$-modules. 

As it can be seen in Section $3$ of $\cite{bst2}$ there are some less trivial non-vanishing properties involving the geometry of the sections in a finite group. So, we cannot hope to have constructed all the non-vanishing groups at this stage!

There is a formula for the dimension of the evaluation at a finite group $G$ of the simple functor $S_{H,V}$ in Theorem $7.1$ of \cite{bst1}. This formula involves the computation of a bilinear form which is constructed by using the \emph{character} of the simple $k\Out(H)$-module $V$. In particular, the vanishing or the non-vanishing at $G$ of this simple functor \emph{should depend} on the simple module $V$ and on the field $k$. 

However, we are interested in the non vanishing at $G$ of \emph{all} the simple functors having $H$ as minimal group. We have several results for this type of global non-vanishing (See Section $3$ of \cite{bst2} for more details) and all of them only depend on the finite group (not on the ground field nor on the simple modules). In these cases, the generating relation is very simple because the identity is the product of two transitive bisets. 

So, it seems natural to ask the following two questions:
\begin{question}
Let $k$ be a field. Let $G$ be a $\NV_k$-group. Let $H$ be a subquotient of $G$. Is it always possible to find an element $U \in kB(H,G)$ and an element $V\in kB(G,H)$ such that $id_H = U \times_G V$ ?
\end{question}
This question is not hopeless since the corresponding question has a positive answer for the category of finite sets with correspondences (See Lemma $4.1$ of \cite{bt_representation_sets}). 
\begin{question}
Let $k$ be a field and $G$ be a finite group. If $G$ is non-vanishing over the field $k$, is it non-vanishing over any field? 
\end{question}
In the rest of this section, we are going to give a negative answer to these two naive questions. More precisely, we give an example of a non-vanishing group $G$ over a field of characteristic zero with a subquotient $H$ such that $id_{H}$ is \emph{not} a product of two elements. Moreover, this example shows that the structure of the simple modules for the group algebra of the outer automorphism of the subquotients of $G$ is also involved in the vanishing or non-vanishing property of $G$. In particular, this group is not non-vanishing over any field. We start by a useful general lemma about non-vanishing groups.
\begin{lemma}\label{extension}
Let $G$ be a finite group. Let $k\subset K$ be a field extension. Then $G$ is $\NV_k$ if and only if $G$ is $\NV_K$. 
\end{lemma}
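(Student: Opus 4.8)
The plan is to strip the statement down, via Proposition \ref{eq_def_nv}, to a purely module-theoretic comparison. Recall that $G$ is $\NV_k$ (resp. $\NV_K$) if and only if $H\vdash_k G$ (resp. $H\vdash_K G$) for every subquotient $H$ of $G$; so it suffices to show, for an arbitrary fixed pair of finite groups $H$ and $G$, that $H\vdash_k G$ holds if and only if $H\vdash_K G$ holds. Everything then reduces to understanding how the submodule $kB(H,G)B(G,H)$ of $kB(H,H)$ behaves under scalar extension.

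The key point is that the composition $\times_G$ is defined over $\mathbb{Z}$ on the canonical bases: by the Mackey formula $(\ref{mackey})$, the product of two transitive bisets is a nonnegative-integer combination of transitive bisets. Hence the products $W\times_G U$ of canonical basis elements of $B(H,G)$ and $B(G,H)$ span a genuine $\mathbb{Z}$-submodule $N\subseteq B(H,H)$, and for any commutative ring $R$ the module $RB(H,G)B(G,H)$ introduced before Proposition \ref{evaluation} is exactly the image of the natural map $R\otimes_{\mathbb{Z}}N\to R\otimes_{\mathbb{Z}}B(H,H)=RB(H,H)$. Setting $M:=B(H,H)/N$, a finitely generated abelian group, and applying the right-exact functor $R\otimes_{\mathbb{Z}}-$ to $N\to B(H,H)\to M\to 0$, I get a natural isomorphism $R\otimes_{\mathbb{Z}}M\cong RB(H,H)/RB(H,G)B(G,H)$. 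In particular, over a field $F$ we have $H\vdash_F G$ if and only if $F\otimes_{\mathbb{Z}}M=0$.

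It then remains to observe that $k\otimes_{\mathbb{Z}}M=0$ if and only if $K\otimes_{\mathbb{Z}}M=0$: since $K\otimes_{\mathbb{Z}}M\cong K\otimes_k(k\otimes_{\mathbb{Z}}M)$ and $K$ is a nonzero free (hence faithfully flat) $k$-module, one of these modules vanishes exactly when the other does. Combining this with the previous paragraph gives $H\vdash_k G\iff H\vdash_K G$, and the lemma follows. I do not expect a genuine obstacle here; the only point requiring a little care is the bookkeeping that $RB(H,G)B(G,H)$ is really the extension of scalars of the fixed integral object $N$, which is where the Mackey formula is used to keep products of integral bisets integral. As a concrete alternative one can phrase the whole argument in terms of ranks: $H\vdash_F G$ is equivalent to the (fixed, integer) structure matrix of the composition in the canonical bases having full rank over $F$, and the rank of an integer matrix over a field depends only on the characteristic, which is shared by $k$ and $K$.
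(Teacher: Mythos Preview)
Your proof is correct. Both you and the paper begin by reducing via Proposition \ref{eq_def_nv} to the equivalence $H\vdash_k G \iff H\vdash_K G$ for each subquotient $H$, and both identify $KB(H,G)B(G,H)$ with the scalar extension of $kB(H,G)B(G,H)$. The divergence is in how the final step is handled: the paper phrases the equality $kB(H,G)B(G,H)=kB(H,H)$ as an isomorphism of $kB(H,H)$-modules and invokes the Noether--Deuring theorem to descend this isomorphism from $K$ to $k$, whereas you work directly with the integral quotient $M=B(H,H)/N$ and observe that $H\vdash_F G$ is equivalent to $F\otimes_{\mathbb Z}M=0$, a condition preserved and reflected by the faithfully flat extension $k\subset K$. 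Your argument is more elementary (it avoids Noether--Deuring entirely) and in fact more robust: it shows the relation $\vdash$ depends only on the characteristic, and your rank-of-integer-matrix reformulation makes this transparent. The paper's route is shorter to state but imports a nontrivial descent theorem for what is at bottom a dimension count.
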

\begin{proof}
By Lemma \ref{evaluation}, the group $G$ is $NV_k$ if and only if $kB(H,G)B(G,H) = kB(H,H)$ for every $H \sqsubseteq G$. That is, if and only if the $kB(H,H)$-modules $kB(H,G)B(G,H)$ and $kB(H,H)$ are isomorphic for every $H\sqsubseteq G$. Since $K\otimes_k kB(H,H) = KB(H,H)$ and $K\otimes_k kB(H,G)B(G,H)=KB(H,G)B(G,H)$, the result follows from Noether Deuring's Theorem (See Theorem $19.25$ \cite{lam}). 
\end{proof} 
We will also use the following result in order to compute the dimension of some evaluations of some simple functors. 
\begin{theo}[Bouc]\label{dim}
Let $k$ be a field of characteristic $0$. Let $p$ be a prime number and $P$ be a finite $p$-group which is not $1$ or $C_p\times C_p$. Let $G$ be a finite group. Then, $\dim_k S_{P,k}(G)$ is the number of conjugacy classes of sections $(T,S)$ of $G$ such that $T/S \cong P$ and $T$ is the direct product of a p-group and a cyclic group.  
\end{theo}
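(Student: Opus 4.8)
This is a theorem of Bouc; the approach I would take is to specialise the general dimension formula of Bouc, Stancu and Th\'evenaz recalled just above, namely Theorem $7.1$ of \cite{bst1}. That formula expresses $\dim_k S_{H,V}(G)$ as a sum over the $G$-conjugacy classes of sections $(T,S)$ of $G$ with $T/S\cong H$, the term attached to $(T,S)$ being the rank of a bilinear form on $V$ built from the character of $V$ and from the way $N_G(T,S)$ sits over the section (ultimately, via the mark homomorphism, from the primitive idempotents of the rational Burnside algebra of $N_G(T,S)/S$ --- which is where $\mathrm{char}\,k=0$ enters). The plan is then to take $H=P$ a $p$-group and $V=k$ the trivial $k\mathrm{Out}(P)$-module.

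With $V=k$ the character is the constant function $1$, so the form attached to a section $(T,S)$ is a single scalar $\lambda_{(T,S)}\in k$, of rank $1$ or $0$ according as $\lambda_{(T,S)}\neq 0$ or $\lambda_{(T,S)}=0$. Hence $\dim_k S_{P,k}(G)$ is the number of $G$-conjugacy classes of sections $(T,S)$ with $T/S\cong P$ and $\lambda_{(T,S)}\neq 0$, and the statement reduces to the group-theoretic claim that, as soon as $P\not\cong 1$ and $P\not\cong C_p\times C_p$,
\[
\lambda_{(T,S)}\neq 0\quad\Longleftrightarrow\quad T\ \text{is the direct product of a}\ p\text{-group and a cyclic group.}
\]
Unwinding the construction of $\lambda_{(T,S)}$ --- for which the concrete descriptions of $J_{H,V}$ in \eqref{jhv} and of the adjunction between $V\mapsto\Delta_{H,V}$ and $F\mapsto\underline{F}(H)$ are the relevant tools --- one finds that it is governed, up to an invertible rational scalar, by the coefficient with which the trivial-subgroup idempotent $e_1^{T}$ of $\mathbb{Q}B(T)$ survives after deflation from $T$ onto $T/S$ and pairing with the section; this coefficient is an alternating Möbius sum over a poset of subgroups of $T$. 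The decisive point is the vanishing criterion for that sum: it is a lemma of Bouc (in the same circle of ideas as the description of the primitive idempotents of $\mathbb{Q}B(-)$ by Gluck and by Yoshida, and as the determination of the composition factors of the Burnside biset functor in Chapter 5 of \cite{bouc_biset}) that this alternating sum is nonzero exactly when $T$ has a central cyclic normal $p$-complement, that is, exactly when $T$ is a direct product of a $p$-group and a cyclic group. The hypotheses on $P$ are imposed because $1$ and $C_p\times C_p$ are precisely the $p$-groups occurring themselves as composition factors (``$B$-groups'') of the rational Burnside functor: for those two groups $\lambda_{(T,S)}$ can be nonzero for sections whose group $T$ is not of the special form, and the clean count breaks down.

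The main obstacle is this last combinatorial vanishing lemma. I would attack it by first reducing, via the butterfly factorisation \eqref{butterfly} and the Mackey formula \eqref{mackey}, to the intrinsic computation inside $T$ (replacing $G$ by $N_G(T,S)$, then focusing on $T$ alone), and then evaluating the Möbius sum on the subgroup lattice of $T$: when $T$ is not a $p$-group times a cyclic group one exhibits a suitable nontrivial $p$-section or noncyclic $p'$-section producing pairwise cancellation that kills the sum, while in the remaining case the sum collapses to a single nonzero term, the assumption $P\not\cong 1,\,C_p\times C_p$ being exactly what guarantees that the $p$-part contribution does not vanish for spurious reasons. An alternative route avoiding \cite{bst1} is to compute $\dim_k\Delta_{P,k}(G)$ directly (it equals the number of conjugacy classes of sections $(T,S)$ with $T/S\cong P$, with \emph{no} condition on $T$) and then to identify the kernel of the canonical surjection $\Delta_{P,k}\twoheadrightarrow S_{P,k}$, evaluated at $G$, with the span of those sections whose group $T$ is not a direct product of a $p$-group and a cyclic group; but this again rests on the same vanishing phenomenon, so it does not remove the real difficulty.
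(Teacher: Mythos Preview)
The paper does not prove this statement at all: its ``proof'' is a one-line citation to Bouc's preprint \cite{bouc_simple}. Your proposal therefore goes well beyond what the paper does, by sketching an actual argument via the Bouc--Stancu--Th\'evenaz dimension formula (Theorem~7.1 of \cite{bst1}) specialised to $V=k$. This is a sensible route, and you have correctly isolated the heart of the matter: after reducing to a single scalar $\lambda_{(T,S)}$ per section, one must prove that this scalar is nonzero precisely when $T$ is a direct product of a $p$-group and a cyclic group. That criterion is indeed the content of Bouc's work on the constants $m_{G,N}$ and $B$-groups (Chapter~5 of \cite{bouc_biset} and the sequel), and your identification of $1$ and $C_p\times C_p$ as the exceptional $p$-groups is on target.

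That said, your sketch does not actually close the gap you name: the ``combinatorial vanishing lemma'' is precisely the hard part, and the cancellation argument you gesture at (pairing subgroups to kill a M\"obius sum when $T$ is not of the required shape) is not carried out. This is genuine work --- it is essentially the main theorem of \cite{bouc_simple} itself --- so while your strategy is correct in outline, it stops exactly where the paper stops, namely at the point where one must invoke Bouc's result. In short: your proposal is a faithful and somewhat more explicit expansion of what the paper leaves as a citation, but it is not an independent proof.
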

\begin{proof}
See the main Theorem of \cite{bouc_simple}.  
\end{proof}
\begin{lemma}
Let $k$ be a field of characteristic $0$. Let $G=A_4 \times C_2$ and $H=C_2 \times C_2 \times C_2$. Then $id_H$ cannot be written as a product of an element of $kB(H,G)$ and an element of $kB(G,H)$.
\end{lemma}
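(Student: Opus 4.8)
The plan is to argue by contradiction, turning the putative single factorisation into an inequality between dimensions of evaluations of a well‑chosen biset functor at $H=C_2\times C_2\times C_2$ and at $G=A_4\times C_2$, and then violating that inequality. Suppose $\mathrm{id}_H=U\times_G V$ with $U\in kB(H,G)$ and $V\in kB(G,H)$. Viewing $U$ and $V$ as morphisms $G\to H$ and $H\to G$ of the biset category, this says $\mathrm{id}_H=U\circ V$. Applying an arbitrary biset functor $F$ over $G$ gives $\mathrm{id}_{F(H)}=F(U)\circ F(V)$, so $F(V)\colon F(H)\to F(G)$ is a split monomorphism and therefore $\dim_k F(H)\leqslant\dim_k F(G)$ \emph{for every} biset functor $F$ over $G$. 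Hence it is enough to exhibit one biset functor $F$ over $G$ with $\dim_k F(H)>\dim_k F(G)$.

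I would take $F=S_{H,W}$ for a suitable module $W$. Since $H\cong(C_2)^3$ we have $\Out(H)=\mathrm{Aut}(H)=GL(3,2)$, a simple group of order $168$; let $W$ be any non‑trivial simple $k\Out(H)$‑module (one exists because $GL(3,2)\neq 1$) and set $F:=S_{H,W}$. As $H$ is the minimal group of $S_{H,W}$ we have $F(H)=S_{H,W}(H)=W$, so $\dim_k F(H)=\dim_k W$. For the value at $G$ I would bound $S_{H,W}$ above by the standard functor: $\Delta_{H,W}$ is a non‑zero quotient of the projective cover $P_{H,W}$ of $S_{H,W}$, and $P_{H,W}$ has simple top $S_{H,W}$, so $S_{H,W}$ is a quotient of $\Delta_{H,W}$ and $\dim_k F(G)\leqslant\dim_k\Delta_{H,W}(G)$.

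It then remains to compute $\Delta_{H,W}(G)$ with $\D:=\Sigma(G)$, namely $\Delta_{H,W}(G)=\overrightarrow{\D}(G,H)\otimes_{k\Out(H)}W$. By the Butterfly Lemma $(\ref{butterfly})$ a canonical basis element $[(G\times H)/L]$ of $kB(G,H)$ survives in $\overrightarrow{\D}(G,H)$ precisely when $q(L)\cong H$, which forces $p_2(L)=H$, $k_2(L)=1$, and $(p_1(L),k_1(L))$ to be a section of $G$ isomorphic to $(H,1)$. The only section $(T,S)$ of $A_4\times C_2$ with $T/S\cong (C_2)^3$ comes from $T=V_4\times C_2$, the unique (hence normal) Sylow $2$‑subgroup, with $S=1$: indeed $|T|$ divides $24$ and is divisible by $8$, so $|T|\in\{8,24\}$, while $A_4\times C_2$ has abelianisation $C_6$ and thus no quotient isomorphic to $(C_2)^3$. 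Hence each such $L$ is the graph of an isomorphism $V_4\times C_2\xrightarrow{\ \sim\ }H$, and modulo $(G\times H)$‑conjugation (precomposition by $\{c_g|_{V_4\times C_2}:g\in G\}$, postcomposition by $\mathrm{Inn}(H)=1$) these are parametrised by $\Gamma\backslash\mathrm{Aut}(H)$, where $\Gamma:=N_G(V_4\times C_2)/C_G(V_4\times C_2)\cong(A_4\times C_2)/(V_4\times C_2)\cong C_3$ is the automiser, an order‑$3$ subgroup of $\mathrm{Aut}(H)=GL(3,2)$. Therefore $\overrightarrow{\D}(G,H)\cong k[\Gamma\backslash GL(3,2)]$ as a right $kGL(3,2)$‑module, whence
\[\Delta_{H,W}(G)\cong k[\Gamma\backslash GL(3,2)]\otimes_{kGL(3,2)}W\cong k\otimes_{k\Gamma}W=W_\Gamma,\]
the module of $\Gamma$‑coinvariants. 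Since $GL(3,2)$ is simple and $W$ is a non‑trivial simple module, the non‑trivial subgroup $\Gamma$ cannot act trivially on $W$ (otherwise $\Gamma$ would lie in a proper non‑trivial normal subgroup), so $W_\Gamma$ is a proper quotient of $W$ and $\dim_k\Delta_{H,W}(G)<\dim_k W$. Combining, $\dim_k F(G)\leqslant\dim_k W_\Gamma<\dim_k W=\dim_k F(H)$, contradicting the inequality obtained in the first paragraph.

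The one substantial point is the explicit identification $\overrightarrow{\D}(G,H)\cong k[\Gamma\backslash GL(3,2)]$, which rests on the elementary but essential facts that $(V_4\times C_2,1)$ is the only section of $A_4\times C_2$ with quotient $(C_2)^3$ and that its automiser is $C_3$; everything else is formal. (One could instead bypass the $\Delta_{H,W}$‑computation by invoking the dimension formula for $S_{H,V}(G)$ of Theorem $7.1$ of \cite{bst1}, or Bouc's description of the evaluations of simple functors, which here gives directly $\dim_k S_{C_2^3,W}(A_4\times C_2)=\dim_k W^{C_3}<\dim_k W$; note also that Theorem $\ref{dim}$ alone does not suffice, since it concerns only the trivial module and $\dim_k S_{C_2^3,k}(A_4\times C_2)=1=\dim_k S_{C_2^3,k}(C_2^3)$.)
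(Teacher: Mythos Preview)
Your proof is correct and follows the same opening reduction as the paper: a single factorisation $\mathrm{id}_H=U\times_G V$ would force $\dim_k F(H)\leqslant\dim_k F(G)$ for every biset functor $F$, so one needs a witness violating this. From there you diverge. The paper picks $F=S_{C_2,k}$ and invokes Bouc's Theorem~\ref{dim} to count sections with quotient $C_2$, obtaining $35$ versus $14$. You instead pick $F=S_{H,W}$ for a non-trivial simple $kGL(3,2)$-module $W$, bound $S_{H,W}(G)$ by $\Delta_{H,W}(G)$, identify $\overrightarrow{\D}(G,H)\cong k[\Gamma\backslash GL(3,2)]$ from the fact that $(V_4\times C_2,1)$ is the unique section of $G$ with quotient $(C_2)^3$ and has automiser $\Gamma\cong C_3$, and conclude via the simplicity of $GL(3,2)$ that $W_\Gamma\subsetneq W$. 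Your route is more self-contained (it avoids the external Theorem~\ref{dim}) and, as you note in your parenthetical, is essentially the computation $S_{H,W}(G)\cong W^{C_3}$ that the paper carries out in the \emph{next} lemma via Proposition~7.1 of \cite{bst2}; the paper's route trades this structural input for a concrete count and a citation. Your observation that Theorem~\ref{dim} alone cannot do the job here (since $\dim S_{(C_2)^3,k}$ equals $1$ at both $G$ and $H$) is also correct and worth recording.
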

\begin{proof}
Let us first remark that the fact that $id_H$ is the product of an element of $kB(H,G)$ and an element of $kB(G,H)$ is equivalent to the fact that $Y_H$ is a direct summand of $Y_G$ in the category of biset functors. Moreover, by standard arguments, if $X$ is a finite group, then the decomposition of the Yoneda functor $Y_X$ as direct sum of indecomposable projective is given by:
\begin{equation*}
Y_X \cong \bigoplus_{(K,W) \in \Lambda} P_{K,W}^{n_{K,W}(X)},
\end{equation*}
where $P_{K,W}$ is a projective cover of $S_{K,W}$ and 
$$n_{K,W}(X)= \dim_k S_{K,W}(X)/\dim_k \End(W). $$ 
So $Y_H$ is a direct summand of $Y_G$ if and only if for every simple functor $S_{K,W}$ we have:
\[\dim_k S_{K,W}(H) \leqslant \dim_k S_{K,W}(G). \] 
Now, we claim that we have $\dim_k S_{C_2,k}(G)=14$ and $\dim_k S_{C_2,k}(H)=35$ so $Y_H$ is not a direct summand of $Y_G$. It remains to prove the claim. These computations can be, in theory, done by using the arguments of Paragraph $7$ of \cite{bst1}. However, the bilinear forms are so huge that it is not reasonable to give a full proof here. Instead, we use Theorem \ref{dim}. Since there are $35$ sections $C_2$ in $C_2\times C_2\times C_2$, we have that $\dim_k S_{C_2,k}(H)=35$. It is not difficult to check that there are $15$ conjugacy classes of sections $C_2$ in $A_4 \times C_2$: $3$ sections $(C_2,1)$, $1$ section $(C_6,C_3)$, $7$ sections $(C_2\times C_2,C_2)$, $3$ sections $\big((C_2)^3,(C_2)^2\big)$ and $1$ section $(A_4\times C_2, A_4)$. However, since $A_4\times C_2$ is not the direct product of a $2$-group and a cyclic group, we have to discount the conjugacy class of the section $(A_4\times C_2,A_4)$. Finally, we have $\dim_k S_{C_2,k}(G)=14$.
\end{proof}
The next step is to check that $A_4\times C_2$ is a non-vanishing group over a field of characteristic zero. Actually, we prove that it is non-vanishing over any field of characteristic different from $3$. 
\begin{lemma}
Let $G=A_4\times C_2$ and let $k$ be a field. The group $G$ is a $\NV_k$-group if and only if $\mathrm{char}(k)\neq 3$. 
\end{lemma}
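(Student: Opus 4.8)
The plan is to reduce, via Proposition \ref{eq_def_nv}, to deciding which subquotients of $G = A_4\times C_2$ are $k$-generated by $G$, and then to isolate the single case where the characteristic matters. By Lemma \ref{extension} it is enough to treat the prime fields, so the real content is to separate $\mathbb{F}_3$ from $\mathbb{Q}$ and from the $\mathbb{F}_p$ with $p\neq 3$. Up to isomorphism the subquotients of $A_4\times C_2$ are $1$, $C_2$, $C_3$, $C_6$, $C_2\times C_2$, $(C_2)^3$, $A_4$ and $A_4\times C_2$. All of these except $C_2\times C_2$ and $(C_2)^3$ are quotients of $G$ (through $G\to C_2$, $G\to C_3$, $G\to G^{\mathrm{ab}}\cong C_6$, $G\to A_4$), hence $k$-generated by $G$ over any field by the factorisation $\mathrm{id}=\Def\circ\Inf$ used in Lemma \ref{abelian}. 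Since $(C_2)^3$ is abelian, $C_2\times C_2$ is a quotient of $(C_2)^3$, so $C_2\times C_2\vdash_k (C_2)^3$; by transitivity of $\vdash_k$ the whole statement is therefore equivalent to: $(C_2)^3\vdash_k A_4\times C_2$ if and only if $\mathrm{char}(k)\neq 3$.

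For the implication $\mathrm{char}(k)\neq 3\Rightarrow (C_2)^3\vdash_k G$, I would use Proposition \ref{evaluation}: it suffices to show $S_{(C_2)^3,V}(G)\neq 0$ for every simple $kB((C_2)^3,(C_2)^3)$-module $V$, which by Corollary \ref{lift_proj} amounts to $S_{K,W}(G)\neq 0$ for every $K\sqsubseteq (C_2)^3$ — that is $K\in\{1,C_2,C_2\times C_2,(C_2)^3\}$ — and every simple $k\Out(K)$-module $W$. The cases $K\in\{1,C_2\}$ are immediate as before. For $K=C_2\times C_2$ (with $\Out(K)\cong S_3$) and $K=(C_2)^3$ (with $\Out(K)\cong GL(3,2)$) I would compute $\dim_k S_{K,W}(G)$ from Bouc's formula — Theorem \ref{dim} for the trivial module $W$, and the bilinear form of \cite{bst1} in general, exactly as in the computation of $\dim_k S_{C_2,k}(G)$ above — after enumerating the conjugacy classes of sections of $A_4\times C_2$ realising $C_2\times C_2$ and $(C_2)^3$. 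The structural point making the computation go through is that $A_4\times C_2$ acts on each of its $2$-subquotients of these types through a group of order $3$, and when $3$ is invertible the averaging element $e=\tfrac13\sum_{c}\Iso(c)\in kB(K,K)$, which lies in $kB(K,G)B(G,K)$ since $e=\tfrac13\,\Res^G_K\circ\Ind^G_K$ by the Mackey formula (\ref{mackey}), is an idempotent; tracking the action of $e$ on $W$ (equivalently, analysing the restriction of $W$ to the relevant $C_3\leqslant\Out(K)$) then forces every $S_{K,W}(G)$ to be nonzero.

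For the converse, over a field of characteristic $3$ I would exhibit a simple biset functor vanishing at $G$. By the reductions above this must be some $S_{(C_2)^3,W}$ (or $S_{C_2\times C_2,W}$) with $W$ a simple $\mathbb{F}_3 GL(3,2)$-module (resp. $\mathbb{F}_3 S_3$-module), and the vanishing is forced precisely because in characteristic $3$ the element $\sum_{c}\Iso(c)=\Res^G_K\circ\Ind^G_K$ is no longer $3$ times an idempotent: it is nilpotent on the free $\mathbb{F}_3 C_3$-module and zero on the $2$-dimensional indecomposable $\mathbb{F}_3 C_3$-module, so for a suitable simple $W$ whose restriction to $C_3\leqslant\Out(K)$ is built from that $2$-dimensional module, the corresponding part of $kB(K,G)B(G,K)$ degenerates and $S_{K,W}(G)=0$; concretely this is the drop in rank of Bouc's bilinear form, equivalently an elementary divisor of the relevant integral mark matrix becoming divisible by $3$. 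The main obstacle is exactly this last step: pinning down which simple $\mathbb{F}_3\Out(K)$-module produces the vanishing and verifying the rank computation — the non-vanishing in all other characteristics is then the complementary generic-rank statement, which by Lemma \ref{extension} propagates from $\mathbb{Q}$ and the $\mathbb{F}_p$ with $p\neq 3$ to every field of characteristic $\neq 3$.
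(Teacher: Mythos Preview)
Your reduction matches the paper's exactly: both observe that every subquotient of $G=A_4\times C_2$ except $C_2\times C_2$ and $(C_2)^3$ is a quotient of $G$, that $C_2\times C_2$ is a quotient of $(C_2)^3$, and hence by transitivity the whole question is whether $(C_2)^3\vdash_k G$. Your structural observation that $G/(C_2)^3\cong C_3$ and that the obstruction is the $C_3$-averaging element is also exactly the paper's point. The divergence is in execution, and there are two genuine gaps.

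First, for the direction $\mathrm{char}(k)\neq 3\Rightarrow\NV_k$, the tools you propose do not work as stated. Theorem \ref{dim} explicitly excludes $P=C_p\times C_p$, so it says nothing about $K=C_2\times C_2$; and the bilinear form of \cite{bst1} is, as the paper itself notes in the preceding lemma, too large to handle by hand here. The paper bypasses both by invoking Proposition 7.1 of \cite{bst2}: since $\Sigma_{(C_2)^3}(G)=\{((C_2)^3,1)\}$, one gets directly $S_{(C_2)^3,V}(G)\cong\mathrm{Tr}_1^{C_3}(V)$, and the question becomes whether every simple $kGL(3,2)$-module has a nonzero $C_3$-fixed vector. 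That is then settled by the (Brauer) character tables at $p=0,2,7$, checking that the trivial $kC_3$-module appears in each restriction. Your idempotent $e$ is morally the same reduction, but the sentence ``tracking the action of $e$ on $W$ \ldots\ forces every $S_{K,W}(G)$ to be nonzero'' hides precisely this character computation, which is the actual content of the proof and cannot be waved away. (Minor point: for $K=V_4\times 1$ one has $\Res^G_K\Ind^G_K=2\sum_{c\in C_3}\Iso(c)$, not $3e$, so the normalisation you wrote fails in characteristic $2$.)

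Second, your $\mathrm{char}(k)=3$ argument is both overcomplicated and incomplete: you speak of a ``suitable simple $W$ whose restriction to $C_3$ is built from the $2$-dimensional indecomposable'' without exhibiting one. The paper's argument is one line: take $V=k$ the trivial $k\Out((C_2)^3)$-module; then $S_{(C_2)^3,k}(G)\cong\mathrm{Tr}_1^{C_3}(k)=0$ since $|C_3|=0$ in $k$.
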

\begin{proof}
Using the fact that the relation $\vdash\ :=\ \vdash_k$ is transitive, it is enough to check that $H\vdash G$ for every subgroup $H$ of $G$. Up to isomorphism, the subgroups of $G$ are: $1$, $C_2$, $C_3$, $C_2\times C_2$, $C_6$, $C_2\times C_2\times C_2$, $A_4$ and $A_4\times C_2$. For our purpose it is enough to look at subgroups which are not isomorphic to a quotient of $G$. It remains $C_2\times C_2$ and $H:=C_2\times C_2\times C_2$. Moreover, the group $C_2\times C_2$ is a quotient of $H$, so by transitivity of $\vdash$, it is enough to check that $H\vdash G$. So we want to check that
 \[id_{H} \in kB(H,G)kB(G,H),\] 
 but in this case the identity is not a product of bisets, and it is rather technical to check this without the help of a computer. However, this is equivalent to the checking that for every simple $k\Out(H)$-module $V$ we have $S_{H,V}(G)\neq 0$. Since $\Sigma_{H}(G) = \{ (H,1) \}$, by Proposition $7.1$ of \cite{bst2}, we have: 
\begin{equation*}
S_{H,V}(G) \cong \mathrm{Tr}_{1}^{G/H}(V),
\end{equation*}
where $G/H$ acts on $V$ via conjugation. 

Since $G/H$ is a cyclic group of order $3$, when $k$ is a field of characteristic $3$, then $S_{H,k}(G)=0$ and $G$ is a vanishing group. Now let us suppose that $k$ is a field of characteristic $p\neq 3$. By Lemma \ref{extension}, we can assume the field to be algebraically closed. Let $\Gamma$ be the subgroup of $GL_{3}(\mathbb{F}_2)$ consisting of $\mathrm{conj}_{g}$ for $g\in G/H$. Let $V$ be a simple $kGL_{3}(\mathbb{F}_2)$-module. Since $\Gamma$ is a group of order $3$, the group algebra $k\Gamma$ is semisimple. So $\Res^{GL_{3}(\mathbb{F}_2)}_{\Gamma}(V)$ is a semisimple module. The space of fixed point of either of the two non trivial simple modules is $0$. In conclusion $V^{\Gamma} \neq \{0\}$ if and only if the trivial $k\Gamma$-module is a direct summand of $\Res^{GL_{3}(\mathbb{F}_2)}_{\Gamma}V$. Since the trace map is surjective, $S_{H,V}(G)\neq 0$ if and only if $k\mid \Res^{GL_{3}(\mathbb{F}_2)}_{\Gamma} V$. We did not find a structural reason for this fact, and as far as we known, it is maybe just a coincidence for the group $GL_3(\mathbb{F}_2)$ and its subgroup of order $3$. 

There are basically three cases: $p=0$, $p=2$ and $p=7$. This can be done by looking at the ordinary character table for $p=0$ and for $p=2$ and $p=7$, one can look at the Brauer character tables of $GL_{3}(\mathbb{F}_2)$. For our purpose, we only need to have the values of the characters on the elements of order $1$ and $3$. All the elements of order $3$ are conjugate in $G$. We let $x$ to be an element of order $3$. We have the following tables, where the characters are written horizontally. 
\begin{equation*}
\begin{tabular}{|c|c|c|c|c|c|c|}
\hline 
(1) & 1 & 3 & 3 & 6 & 7 & 8 \\ 
\hline 
(x) & 1 & 0 & 0 & 0 & 1 & -1  \\ 
\hline 
\end{tabular} 
\end{equation*}
For the prime $p=2$ we have the following table:
\begin{equation*}
\begin{tabular}{|c|c|c|c|c|}
\hline 
(1) & 1 & 3  & 3 & 8 \\ 
\hline 
(x) & 1 & 0 & 0 & -1 \\ 
\hline 
\end{tabular} 
\end{equation*}
And finally, for $p=7$, we have:
\begin{equation*}
\begin{tabular}{|c|c|c|c|c|}
\hline 
(1)& 1 & 3 & 5 & 7 \\ 
\hline 
(x) & 1 & 0 & -1 & 1 \\ 
\hline 
\end{tabular} 
\end{equation*}
In the three cases, it is easy to check that the trivial module appears at least ones in $\Res^{GL(3,2)}_{C_3}(V)$ for every simple $kGL(3,2)$-module $V$. 
\end{proof}
As corollary, of this example, we have:
\begin{coro}\label{ex_vanishing}
Let $k$ be a field. The family of $\NV_k$-groups is not closed under taking subgroups or taking quotients. Moreover, it contains non nilpotent groups. 
\end{coro}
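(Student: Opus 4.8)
The plan is to use the group $G=A_4\times C_2$ studied in the two preceding lemmas as a single witness for all three assertions. By the previous lemma, $G$ is a $\NV_k$-group as soon as $\mathrm{char}(k)\neq 3$; since $A_4$ is not nilpotent, $G$ is not nilpotent either, which already gives the last assertion (the family of $\NV_k$-groups contains non-nilpotent groups) for such $k$. For non-closure I would observe that $A_4$ is at once a subgroup of $G$, namely $A_4\times\{1\}$, and a quotient of $G$, namely $G/(\{1\}\times C_2)$; so it is enough to prove that $A_4$ is \emph{not} a $\NV_k$-group, which disposes of non-closure under subgroups and under quotients at the same time.

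The real work is thus to show $A_4$ is not $\NV_k$, that is, to exhibit a subquotient $H$ of $A_4$ and a simple $k\Out(H)$-module $W$ with $S_{H,W}(A_4)=0$. I would take $H=V_4$, the Klein four subgroup of $A_4$, and evaluate $S_{V_4,W}(A_4)$ exactly as in the previous lemma. One has $V_4\trianglelefteq A_4$, and the only section of $A_4$ with quotient isomorphic to $V_4$ is $(V_4,1)$: a section $(T,S)$ with $T/S\cong C_2\times C_2$ has $|T|\ge 4$, the Sylow $2$-subgroup of $A_4$ is exactly $V_4$, and the subgroups of order $3$ are not normal in $A_4$, so $T=V_4$ and $S=1$. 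Hence $\Sigma_{V_4}(A_4)=\{(V_4,1)\}$ and Proposition $7.1$ of \cite{bst2} (exactly as in the preceding lemma) gives
$$S_{V_4,W}(A_4)\;\cong\;\mathrm{Tr}_1^{C_3}\big(\Res^{S_3}_{C_3}W\big),$$
where, via the conjugation action of $A_4$ on $V_4$, the group $C_3\cong A_4/V_4$ is identified with the order-$3$ subgroup of $\Out(V_4)=\mathrm{Aut}(V_4)\cong GL_2(\mathbb{F}_2)\cong S_3$. By Lemma \ref{extension} I may assume $k$ algebraically closed, and then it suffices to find one simple $kS_3$-module $W$ whose restriction to $C_3$ has no trivial constituent, since such a module is annihilated by $\mathrm{Tr}_1^{C_3}$. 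When $\mathrm{char}(k)\neq 3$ the two-dimensional simple $kS_3$-module works ($\Res^{S_3}_{C_3}$ of it is the sum of the two non-trivial characters of $C_3$, and $1+\zeta+\zeta^2=0$); when $\mathrm{char}(k)=3$ the trivial module works, its restriction being the trivial $kC_3$-module, on which $\mathrm{Tr}_1^{C_3}$ is multiplication by $3=0$. In either case $S_{V_4,W}(A_4)=0$, so $A_4$ is not $\NV_k$.

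Putting things together, for a field $k$ with $\mathrm{char}(k)\neq 3$ the group $G=A_4\times C_2$ is a non-nilpotent $\NV_k$-group having $A_4$ both as a subgroup and as a quotient, while $A_4$ is not $\NV_k$; all three assertions follow. The one step that does not run uniformly over every field is the $\NV_k$-property of the ambient group: the preceding lemma only provides $G\in\NV_k$ when $\mathrm{char}(k)\neq 3$, so in characteristic $3$ the corollary either has to be read under that restriction or has to be supported by a bulkier witness whose $\NV_3$-property is checked separately — this is the main obstacle I would expect, since $A_4$ is never $\NV_k$ in any characteristic (as the computation above shows) while no comparably small non-nilpotent $\NV_3$-group is apparent.
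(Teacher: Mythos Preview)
Your proof is correct and essentially identical to the paper's: both use $A_4\times C_2$ as the ambient group, observe that $A_4$ is simultaneously a subgroup and a quotient, and show $A_4$ is vanishing for every field by computing $S_{V_4,W}(A_4)$ via Proposition~7.1 of \cite{bst2}, taking the two-dimensional simple $kS_3$-module when $\mathrm{char}(k)\neq 3$ and the trivial module when $\mathrm{char}(k)=3$. Your closing caveat about characteristic~$3$ is well taken: the paper's own proof relies on exactly the same preceding lemma and therefore carries the same restriction, so the corollary is really established only for $\mathrm{char}(k)\neq 3$.
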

\begin{proof} $A_4$ is a subgroup and a quotient of $A_4\times C_2$. We just have to show that $A_4$ is a vanishing group for every field. Let $C_2\times C_2$ be the subgroup of order $4$ of $A_4$. It is easy to see that $\Sigma_{C_2\times C_2}(A_4) = \{ (C_2\times C_2,1)\}$. So if $V$ is a simple $k\Out(C_2\times C_2)$-module, by Proposition $7.1$ of \cite{bst2}, we have: 
\begin{equation*}
S_{H,V}(G) \cong \mathrm{Tr}_{1}^{N_{G}(T,S)}(V).
\end{equation*} 
Here $\Out(C_2\times C_2)\cong S_3$ and $N_{G}(T,S)$ is the subgroup of $S_3$ of order $3$.
\begin{enumerate}
\item If $\mathrm{char}(k)=0$ or $2$. If $V$ is the simple $kS_3$-module of dimension $2$, then it is easy to check that $Tr_{1}^{C_3}(V)=0$.
\item If $\mathrm{char}(k)=3$, then $Tr_{1}^{C_3}(k)=0$.
\end{enumerate}
So in every case, the group $G$ is a vanishing group. 
\end{proof}
Finally, we summaries the known results for the non-vanishing groups.
\begin{theo}
\begin{enumerate}
\item The abelian groups and, more generally, the $s$-self dual groups are non-vanishing over any field.
\item There are other examples of non-vanishing groups over a field of characteristic $0$.  
\item The family of non-vanishing group over a field $k$ is not closed under taking subgroups or quotients. 
\end{enumerate}
\end{theo}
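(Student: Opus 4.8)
The statement is a recapitulation of what has been established in this section, so the plan is simply to assemble it from the preceding results; no new ingredient is needed. For item~(1) I would cite Lemma~\ref{abelian} for the abelian case. For the $s$-self dual case, recall that by definition every subgroup of an $s$-self dual group $G$ is isomorphic to a quotient of $G$, which is exactly the hypothesis of the Corollary following Lemma~\ref{abelian}; that Corollary then gives at once that $G$ is a $\NV_k$-group for every field $k$. (Under the hood one uses that a quotient $H=G/N$ satisfies $\mathrm{id}_H=\Def^{G}_{G/N}\Inf_{G/N}^{G}$, hence $H\vdash_k G$; transitivity of $\vdash_k$ propagates this from the subgroups of $G$ to all of its subquotients, and Proposition~\ref{eq_def_nv} identifies that with being $\NV_k$.)

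For item~(2) I would exhibit $G=A_4\times C_2$: by the Lemma establishing that $A_4\times C_2$ is $\NV_k$ if and only if $\mathrm{char}(k)\neq 3$, this $G$ is non-vanishing over every field of characteristic zero, and since $A_4$ is not nilpotent, $G$ is not $s$-self dual, so it is a genuinely new example. The second sentence of item~(2) is a remark on the current state of knowledge rather than a theorem: the only groups we can certify to be non-vanishing over \emph{every} field are the $s$-self dual ones, and since the $A_4\times C_2$ example shows that non-vanishing can genuinely depend on the field, we have neither a further construction nor an impossibility proof, and this point is flagged as open.

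For item~(3) I would again use $G=A_4\times C_2$, which is $\NV_k$ for $\mathrm{char}(k)\neq 3$, together with the Corollary showing that $A_4$ is a vanishing group over every field (the crux there being $\mathrm{Tr}_1^{C_3}(V)=0$ for the relevant simple $k\Out(C_2\times C_2)\cong kS_3$-module, via Proposition~$7.1$ of \cite{bst2}). Since $A_4$ is simultaneously a subgroup and a quotient of $A_4\times C_2$, the family of $\NV_k$-groups is closed under neither operation, and a single field such as $\mathbb{Q}$ already witnesses this. There is no genuine mathematical obstacle here; the only care required is organizational — presenting the $A_4\times C_2$ example so that it serves both~(2) and~(3), and making explicit that the second half of~(2) is an open problem rather than something proved.
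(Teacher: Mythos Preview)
Your proposal is correct and matches the paper's intent exactly: this theorem is explicitly introduced in the paper as a summary of the section (``Finally, we summarise the known results over the non-vanishing groups''), with no separate proof given, and the assembly you describe from Lemma~\ref{abelian}, its Corollary, the $A_4\times C_2$ lemma, and the Corollary on $A_4$ is precisely the intended justification. Your observation that $A_4\times C_2$ is not $s$-self dual because it is not nilpotent (via the classification theorem) is the right way to make item~(2) precise.
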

\begin{re}
I don't know whether there are other examples of groups that are non-vanishing over any field. Even better, one can define the generating relation over the ring of integers. Then all the $s$-self-dual groups are non-vanishing over the ring $\mathbb{Z}$, and I am wondering about the existence of others. 
\end{re}
\section{Around highest-weight structure }\label{qh}
We recall the \emph{famous Theorem} of Peter Webb about the Highest-weight structure of the category of biset functors.
\begin{theo}[Webb]\label{webb}
Let $\D$ be an admissible category. Let $k$ be a field such that $\mathrm{char}(k) \nmid |\Out(H)|$ for every $H\in \D$. If $\D$ has only finitely many isomorphism classes of objects, then the category $\FDk$ is a highest-weight category.
\end{theo}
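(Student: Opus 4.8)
The plan is to verify the axioms of a highest-weight category for $\FDk$ directly, reading off the weight poset and the standard/co-standard objects from the data already set up. First I would fix the poset: let $\Lambda$ be the set of isomorphism classes of pairs $(H,V)$ with $H$ an object of $\D$ and $V$ a simple $k\Out(H)$-module, so that the $S_{H,V}$, $(H,V)\in\Lambda$, are exactly the simple objects of $\FDk$. Order $\Lambda$ by $(K,W)\leqslant (H,V)\iff H\sqsubseteq K$, pairs $(H,V)$ and $(H,V')$ being incomparable when $V\not\cong V'$. This is a genuine partial order, since $\sqsubseteq$ is antisymmetric on isomorphism classes (a subquotient of a group of the same order is the group itself), and $\Lambda$ is finite because $\D$ has finitely many objects and every $\Out(H)$ is finite. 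The standard objects will be the $\Delta_{H,V}=\Delta_{H,V}^{\D}$ and the co-standard objects the $\nabla_{H,V}^{\D}$ introduced above; note that the minimal objects of $\D$ in the subquotient order give the maximal weights.

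Next I would pin down the local structure of $\Delta_{H,V}$, and this is where the hypothesis $\mathrm{char}(k)\nmid|\Out(H)|$ enters: by Maschke's theorem each $k\Out(H)$ is semisimple. (i) Since $\overrightarrow{\D}(H,H)\cong k\Out(H)$ one has $\Delta_{H,V}(H)=V$, and because $\D(K,H)$ surjects onto $\overrightarrow{\D}(K,H)$ for all $K$, the functor $\Delta_{H,V}$ is generated by this single value; as $V$ is simple over $k\Out(H)$, hence over $\D(H,H)$, every proper subfunctor vanishes at $H$, so $\Delta_{H,V}$ has simple top $S_{H,V}$ of multiplicity one, and its radical is the largest subfunctor vanishing at $H$. (ii) By Bouc's Butterfly Lemma $(\ref{butterfly})$ every transitive $K$-$H$-biset factors through a common subquotient $q(L)$ of $K$ and $H$; those not lying in $I_\D(K,H)$ have $q(L)\cong H$, which forces $H\sqsubseteq K$. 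Hence $\Delta_{H,V}(K)=0$ unless $H\sqsubseteq K$, and since evaluation is exact and $S_{K,W}(K)\neq 0$, every composition factor $S_{K,W}$ of $\Delta_{H,V}$ satisfies $H\sqsubseteq K$. (iii) Combining (i) and (ii): a composition factor of $\mathrm{Rad}\,\Delta_{H,V}$ vanishes at $H$, which excludes $K\cong H$, so all composition factors of $\mathrm{Rad}\,\Delta_{H,V}$ are $S_{K,W}$ with $H\sqsubsetneq K$, i.e. $(K,W)<(H,V)$. A dual argument (using $\overleftarrow{\D}$ in place of $\overrightarrow{\D}$) gives the analogous statement for $\nabla_{H,V}$, with socle $S_{H,V}$. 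This establishes the ``standard object'' axiom.

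The crux, which I expect to be the main obstacle, is to show that each projective cover $P_{H,V}$ admits a filtration with sections among the $\Delta_{K,W}$, with $\Delta_{H,V}$ on top and the remaining sections indexed by weights strictly above $(H,V)$ (equivalently $K\sqsubsetneq H$). I would prove this by downward induction on $\Lambda$, equivalently induction on the number of objects of $\D$ by removing a maximal object of $\D$ (which keeps $\D$ admissible, since for $L\in S(K_1,K_2)$ with $K_1,K_2$ distinct from the removed object one has $q(L)\neq H$ by maximality). For $H$ maximal in $\D$, the functors $F\mapsto\underline F(H)$ and $F\mapsto\overrightarrow F(H)$ take values in $k\Out(H)\Mod$, and together with the left adjoint $\Delta_{H,-}$ of $\underline{(-)}(H)$ (the Lemma above) and its co-standard counterpart they present $\FDk$ as glued from $\mathcal{F}_{\D\smallsetminus\{H\},k}$ and $k\Out(H)\Mod$ in a recollement refining the one already displayed for $kB(G,G)\Mod$. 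The decisive point is that this top layer $k\Out(H)\Mod$ is \emph{semisimple}, precisely by the hypothesis; consequently the full subcategory of $\FDk$ of functors admitting a standard filtration is closed under the operations needed (extensions, and kernels of the relevant epimorphisms), and — because every layer is semisimple — it contains all projectives, the obstructions to lifting a standard filtration through each step of the recollement living in $\mathrm{Ext}^1$-groups between standards that vanish by (i)--(iii). Granting the two axioms, and since $\FDk$ has enough projectives and finitely many simples with $\mathrm{End}(S_{H,V})=\mathrm{End}_{k\Out(H)}(V)$ a division ring, the Cline--Parshall--Scott recognition theorem shows that $\FDk$ is a highest-weight category (indeed quasi-hereditary); the role of the hypothesis is exactly to make every layer $k\Out(H)\Mod$ semisimple, for otherwise one obtains only a standardly stratified structure.
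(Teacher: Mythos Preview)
The paper's own proof is simply a citation to Theorem~7.2 of \cite{webb_strat2}, together with the remark that Webb's argument (stated there for globally defined Mackey functors) carries over verbatim to any admissible $\D$, and that his ``counting arguments'' (Theorem~6.3 of \cite{webb_strat2}) can be bypassed by using the functorial description of the $\Delta_{H,V}$. Your proposal is therefore far more detailed than what the paper actually provides, and your steps (i)--(iii) --- the weight poset, the simple top of $\Delta_{H,V}$, and the control of its composition factors via the Butterfly Lemma --- are correct and are precisely the ingredients underlying Webb's proof.

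Your route to the $\Delta$-filtration of the projectives, by induction on $|\D|$ via a recollement with semisimple layer $k\Out(H)\Mod$ for $H$ maximal, is a legitimate alternative to Webb's counting argument; it is essentially the construction of a heredity chain. The recollement you invoke does exist: for $H$ maximal in $\D$, the functors supported only at $H$ are exactly the $k\Out(H)$-modules extended by zero, because any endomorphism of $H$ in $I_\D(H,H)$ factors through some $K\sqsubset H$ and hence acts as zero. Note, however, that $k\Out(H)\Mod$ sits as the \emph{subcategory} of this recollement, not the quotient, and the corresponding $\Delta_{H,V}=S_{H,V}$ lie at the \emph{minimal} weights; so what your induction really needs is that the left Kan extension from $\mathcal{F}_{\D\smallsetminus\{H\},k}$ to $\FDk$ preserves $\Delta$-filtrations. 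That is the step you leave to a vague appeal to vanishing $\mathrm{Ext}^1$-groups, and it is where the genuine work lies. It can be made precise, but a more direct argument --- closer to Webb, and to the ``functorial'' alternative the paper alludes to --- is to filter each Yoneda functor $Y_G$ by the subfunctors spanned by transitive bisets $(K\times G)/L$ with $|q(L)|\leqslant n$, observe from the Butterfly factorisation that the successive quotients are $\bigoplus_{|H|=n}\Delta_{H,\overleftarrow{\D}(H,G)}$, and then use the semisimplicity of $k\Out(H)$ to split each $\overleftarrow{\D}(H,G)$ into simples.
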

\begin{proof}
This is a reformulation of Theorem $7.2$ of \cite{webb_strat2}. The standard objects are given by the functors $\Delta_{H,V}$ where $S_{H,V}$ is a simple functor of $\D$. Our context is slightly more general than the one of Webb. Indeed, his theorem is stated in terms of \emph{globally defined Mackey functors}. They correspond to some particular admissible biset categories. Nevertheless, it is straightforward to check that his result can be extended to our more general situation. Moreover, one can avoid the counting arguments of Theorem $6.3$ of \cite{webb_strat2} by a systematic use of this functorial definition of the standard objects. 
\end{proof}
The representation theory of quasi-hereditary algebras is philosophically close to the representation theory of semi-simple Lie algebras. One very important feature of semi-simple Lie algebra is the existence of so-called Borel subalgebras. This notion has been generalized to arbitrary quasi-hereditary algebras by K{\"o}nig in \cite{konig_borel} and seems to be very important in recent development of the theory. The key result, which is highly non trivial, is that for every quasi-hereditary algebra $A$, there is an algebra $A'$ that is Morita equivalent to $A$ such that $A'$ has an exact Borel subalgebra (See Corollary $1.3$ of \cite{kko_borel}). In the rest of the section, we describe the category of modules over an exact Borel subalgebra of the biset functor category.
\begin{de}[Definition 2.2 of \cite{kko_borel}]
Let $(A,\leqslant)$ be a quasi-hereditary algebra with $n$ simple modules. Then, a subalgebra $B\subseteq A$ is called an exact Borel subalgebra if:
\begin{enumerate}
\item The algebra $B$ has also $n$ simple modules denoted by $L_{B}(i)$ for $i\in \{1,\cdots,n\}$, and $(B,\leqslant)$ is a quasi-hereditary algebra with \emph{simple} standard modules.
\item The induction functor $A\otimes_{B}-$ is exact.
\item There is an isomorphism $A\otimes_{B} L_{B}(i) \cong \Delta_{A}(i)$. Here $\Delta_{A}(i)$ denotes the standard module with weight $i$ of $A$. 
\end{enumerate}
\end{de}

The reason to call such a subalgebra an exact Borel subalgebra comes from an analogy due to K{\"o}nig (See Introduction of \cite{konig_borel}). Let $\mathfrak{g}$ be a complex semi-simple Lie algebra and $\mathfrak{b}$ be a Borel subalgebra. Let $\mathcal{U}(\mathfrak{g})$ and $\mathcal{U}(\mathfrak{b})$ be the corresponding enveloping algebras. The Poincar\'e-Birkhoff-Witt theorem implies that $\mathcal{U}(\mathfrak{g})$ is free as left or right $\mathcal{U}(\mathfrak{b})$-module (See Sections $0.5$ and $1.3$ \cite{Humphreys}). In particular, the induction from $\mathcal{U}(\mathfrak{b})$ to $\mathcal{U}(\mathfrak{g})$ is an exact functor. So the second point of the definition can be thought of as an analogous of the PBW theorem. 

 The category of biset functors is equivalent to a category of modules over an algebra $A$ (without unit in general). But, in this paper we are more interested by the category of biset functors and not very interested by the choice of an underlying algebra. So, if there is a subcategory of $\FDk$ which is equivalent to the category of modules over an exact Borel subalgebra of $A$, I will \emph{abusively} say that this category is an \emph{exact Borel subcategory.}

 If $\D$ is a replete biset category, we denote by $\D_{0}$ the following admissible biset category. The objects of $\D_{0}$ are the objects of $\D$. Now, if $H$ and $K$ are two groups of $\D$, then $\Hom_{\D_0}(H,K)$ is given by the left-free double Burnside algebra. That is, we have forgotten the inflation in the five elementary bisets. The biset functors over $\D_0$ are sometimes called \emph{deflation functors.} 
\begin{lemma}\label{iso1}
Let $\D$ be a replete biset category and let $\D_0$ as above. Let $H\in \D$ and $F\in \FDk$. We write $\Res^{\D}_{\D_0}$ the forgetful functor from $\FDk$ to $\mathcal{F}_{\D_0,k}$. Then, we have an isomorphism, natural in $F$, of $k\Out(H)$-module
$$ \underline{\Res^{\D}_{\D_0}F}(H) \cong \underline{F}(H).$$  
\end{lemma}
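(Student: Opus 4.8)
The plan is to show that the two sides are in fact the \emph{same} $k$-subspace of $F(H)$, and then to check that the identity map carries the $k\Out(H)$-structure correctly. Put $G:=\Res^{\D}_{\D_0}F$. Unravelling the definition, $\underline{F}(H)$ is the intersection of the kernels of $F(U)\colon F(H)\to F(K)$ taken over all $K\sqsubset H$ and all $U\in\D(K,H)$, whereas $\underline{G}(H)$ is the same intersection taken only over those $U$ lying in $\D_0(K,H)$, i.e.\ over the \emph{left-free} bisets. Since $\D_0$ and $\D$ have the same objects and $\D_0(K,H)\subseteq\D(K,H)$, the inclusion $\underline{F}(H)\subseteq\underline{G}(H)$ is immediate, and the whole content is the reverse inclusion.

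For that, I would fix $x\in\underline{G}(H)$ and $U\in\D(K,H)$ with $K\sqsubset H$, and prove $F(U)(x)=0$. By $k$-linearity of $F$ and since $\D(K,H)=kB(K,H)$ is spanned by the transitive bisets, one may assume $U=(K\times H)/L$ for some $L\leqslant K\times H$. Now apply the Butterfly Lemma $(\ref{butterfly})$, which writes $U=V\circ W$ with
\begin{equation*}
W = \Def^{p_2(L)}_{p_2(L)/k_2(L)}\circ\Res^{H}_{p_2(L)}\colon H\to q(L), \quad V = \Ind_{p_1(L)}^{K}\circ\Inf_{p_1(L)/k_1(L)}^{p_1(L)}\circ\Iso(\alpha)\colon q(L)\to K.
\end{equation*}
Two remarks close the argument. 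First, $W$ is a composite of a deflation and a restriction, hence a left-free biset, so $W\in\D_0(q(L),H)$. Second, $q(L)\cong p_1(L)/k_1(L)$ is a subquotient of $p_1(L)\leqslant K$, so $|q(L)|\leqslant|K|<|H|$ and therefore $q(L)\sqsubset H$. Thus $W$ is one of the morphisms in the intersection defining $\underline{G}(H)$, so $F(W)(x)=G(W)(x)=0$, whence $F(U)(x)=F(V)\bigl(F(W)(x)\bigr)=0$. As $K$ and $U$ were arbitrary, $x\in\underline{F}(H)$, and the two subspaces of $F(H)$ coincide.

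It then remains to upgrade this equality of subspaces to a natural isomorphism of $k\Out(H)$-modules. Naturality in $F$ is formal: a morphism $\theta\colon F\to F'$ induces $\theta_H$ carrying $\underline{F}(H)$ into $\underline{F'}(H)$, and applying $\Res^{\D}_{\D_0}$ changes neither the underlying spaces nor the component maps, so $F\mapsto\underline{F}(H)$ and $F\mapsto\underline{\Res^{\D}_{\D_0}F}(H)$ are the same subfunctor of the evaluation $F\mapsto F(H)$. For the module structure I would recall that by Proposition $4.3.2$ of \cite{bouc_biset} both $\D(H,H)/I_{\D}(H,H)$ and $\D_0(H,H)/I_{\D_0}(H,H)$ are canonically $k\Out(H)$, the identification being in each case induced by $\alpha\mapsto\Iso(\alpha)$; moreover the ideal of endomorphisms factoring through strictly smaller groups kills both $\underline{F}(H)$ and $\underline{G}(H)$ — this is again the Butterfly routing through a proper subquotient $q(L')\sqsubset H$ via a left-free biset. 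Hence both sides are $k\Out(H)$-modules, and on the common underlying space the residual action is in both cases induced by the (left-free, hence $\Res^{\D}_{\D_0}$-invariant) isomorphism bisets $\Iso(\alpha)$, so the identity is a $k\Out(H)$-isomorphism. The one place deserving genuine care — everything else being a single application of $(\ref{butterfly})$ — is precisely this last point: making sure the two canonical surjections onto $k\Out(H)$ agree on the iso-bisets, i.e.\ that the set-theoretic equality is compatible with the inclusion $\D_0\hookrightarrow\D$.
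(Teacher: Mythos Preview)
Your proof is correct and follows essentially the same approach as the paper: both arguments use the Butterfly decomposition to factor an arbitrary transitive biset through a left-free $\Def\circ\Res$ landing in a strict subquotient of $H$, which is exactly the key step. The paper phrases this as showing both sides equal the intermediate intersection $\bigcap_{(B,A)}\mathrm{Ker}\bigl(F(\Def^{B}_{B/A}\circ\Res^{H}_{B})\bigr)$, whereas you argue the inclusion $\underline{G}(H)\subseteq\underline{F}(H)$ directly; your version also spells out the $k\Out(H)$-module compatibility and naturality, which the paper leaves implicit.
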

\begin{proof}
Let $F\in \FDk$. Then, by definition we have:
\begin{equation*}
\underline{F}(H) = \bigcap_{\underset{U\in kB(K,H)}{K\sqsubset H}} \mathrm{Ker}\big(F(U)\big).
\end{equation*}
We claim that we have:
\begin{equation}\label{3}
\underline{F}(H) = \bigcap_{\underset{B/A \sqsubset H }{(B,A)\in \Sigma(H)}} \mathrm{Ker}\Big(F(\Def^{B}_{B/A} \circ \Res^{H}_{B})\Big). 
\end{equation}
It is clear that $\underline{F}(H)$ is a subset of the right hand side term. Conversely, let $x$ be an element of the right hand side. Let $K$ be a strict subquotient of $H$ and $U\in kB(K,H)$. Then $U$ is a linear combination of transitive bisets $(K\times H)/L$. Moreover, by Bouc's Butterfly decomposition we have:
$$(K\times H)/L \cong \Ind_{D}^{K} \circ \Inf_{D/C}^{D} \circ \Iso(\alpha) \circ \Def^{B}_{B/A} \circ \Res^{H}_{B} ,$$
where $(B,A)$ is a particular section of $H$ and $(D,C)$ is a particular section of $K$. It is clear that $B/A$ is a \emph{strict} subquotient of $H$ as it is isomorphic to a subquotient of $K$.  Now $\Def^{B}_{B/A} \circ \Res^{H}_{B}$ kills $x$ by assumption, so $(K\times H)/L$ kills $x$. This proves that $x\in \mathrm{Ker}(F(U))$, and the equality $(\ref{3})$ holds. The same result holds for $\Res^{\D}_{\D_0}(F)$, as the butterfly decomposition still holds in the category $\D_0$. The only difference is that there is no inflation in this decomposition. 
\end{proof}
\begin{theo}\label{6.4}
Let $\D$ be a replete biset category with only finitely many isomorphism classes of objects. Let $k$ be a field such that $\mathrm{char}(k)\nmid |\Out(H)|$ for $H\in \D$. Then, the category $\mathcal{F}_{\D_0,k}$ is an exact Borel subcategory of $\FDk$. 
\end{theo}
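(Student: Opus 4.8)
The plan is to verify, for the pair $\mathcal{F}_{\D_0,k}\subseteq\FDk$, the three conditions of the notion of exact Borel subalgebra recalled above, the relevant functors being the forgetful functor $\Res^{\D}_{\D_0}\colon\FDk\to\mathcal{F}_{\D_0,k}$ --- which is exact, the abelian structure of both categories being pointwise and $\D_0$ having the same objects as $\D$ --- and its left adjoint $\mathrm{Ind}^{\D}_{\D_0}$, which plays the role of $A\otimes_{B}-$. First I would record that $\D_0$ is itself admissible: the left-free subgroups of $H\times K$ form a conjugation-stable family, and for a transitive left-free biset $(K\times H)/L$ one has $k_1(L)=1$, so its Butterfly decomposition \ref{butterfly} involves no inflation and the two building blocks $\Def^{p_2(L)}_{p_2(L)/k_2(L)}\circ\Res^{H}_{p_2(L)}$ and $\Ind^{K}_{p_1(L)}$ demanded by admissibility are again left-free. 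Since $\D_0$ has the same (finitely many) isomorphism classes of objects as $\D$ and the same groups $\Out(H)$, Theorem \ref{webb} applies to $\D_0$: the category $\mathcal{F}_{\D_0,k}$ (which, there being only finitely many objects, is the module category of a finite-dimensional algebra $B$) is a highest-weight category whose standard objects are the $\Delta^{\D_0}_{H,V}$ and whose simple objects are indexed by the same set of pairs $(H,V)$ as those of $\FDk$. In particular both categories have the same number of simples, which is half of condition $(1)$.

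The other half of condition $(1)$ --- that the standard objects $\Delta^{\D_0}_{H,V}$ are \emph{simple} --- is the first place real work is needed. By Proposition $4.3.2$ of \cite{bouc_biset} applied to $\D_0$ one has $\overrightarrow{\D_0}(H,H)\cong k\Out(H)$, so $\Delta^{\D_0}_{H,V}$ is generated by its value $V$ at $H$; as $V$ is simple over $k\Out(H)$, this functor is simple as soon as $0$ is its only subfunctor vanishing at $H$. To establish the latter I would again exploit the absence of inflation in $\D_0$: by the Butterfly decomposition, a transitive left-free morphism from $H$ to $K$ survives in $\overrightarrow{\D_0}(K,H)$ only when it is, up to isomorphism and conjugation, of the form $\Ind^{K}_{L}\circ\Iso(\theta)$ for a subgroup $L$ of $K$ isomorphic to $H$ through $\theta$ (otherwise it factors through the group $q(L)$, which in that case is a strict subquotient of $H$, so it lies in $I_{\D_0}(K,H)$). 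Given a non-zero element of $\Delta^{\D_0}_{H,V}(K)$, which one may assume concentrated along such an $L$, composing with $\Iso(\theta^{-1})\circ\Res^{K}_{L}\colon K\to H$ and applying the Mackey formula \ref{mackey} folds it back onto $H$ modulo morphisms factoring through proper subgroups of $H$; the hypothesis $\mathrm{char}(k)\nmid|\Out(H)|$ then guarantees that the surviving element of $k\Out(H)$ acts non-trivially. Hence $\Delta^{\D_0}_{H,V}=S^{\D_0}_{H,V}$, and $\mathcal{F}_{\D_0,k}$ is quasi-hereditary with simple standard objects.

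Condition $(3)$ becomes purely formal once $(1)$ is known, since it then reads $\mathrm{Ind}^{\D}_{\D_0}(\Delta^{\D_0}_{H,V})\cong\Delta^{\D}_{H,V}$. This I would get from Yoneda and the chain of isomorphisms, natural in $F\in\FDk$,
\begin{align*}
\Hom_{\FDk}\!\big(\mathrm{Ind}^{\D}_{\D_0}\Delta^{\D_0}_{H,V},\,F\big)
&\cong\Hom_{\mathcal{F}_{\D_0,k}}\!\big(\Delta^{\D_0}_{H,V},\,\Res^{\D}_{\D_0}F\big)\\
&\cong\Hom_{k\Out(H)}\!\big(V,\,\underline{\Res^{\D}_{\D_0}F}(H)\big)\\
&\cong\Hom_{k\Out(H)}\!\big(V,\,\underline{F}(H)\big)\\
&\cong\Hom_{\FDk}\!\big(\Delta^{\D}_{H,V},\,F\big),
\end{align*}
where the first isomorphism is the $(\mathrm{Ind}^{\D}_{\D_0},\Res^{\D}_{\D_0})$-adjunction, the second and fourth are the adjunction between $V\mapsto\Delta_{H,V}$ and $F\mapsto\underline{F}(H)$ recalled at the end of Section $2$ (used in $\mathcal{F}_{\D_0,k}$ and in $\FDk$ respectively), and the third is Lemma \ref{iso1}. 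The same adjunction argument applied to $Y^{\D_0}_{H}=\Hom_{\D_0}(H,-)$ shows in passing that $\mathrm{Ind}^{\D}_{\D_0}(Y^{\D_0}_{H})\cong Y^{\D}_{H}$ (both represent $F\mapsto F(H)$ on $\FDk$), so induction carries projectives to projectives.

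The main obstacle is condition $(2)$: the exactness of $\mathrm{Ind}^{\D}_{\D_0}$. Being a left adjoint it is automatically right exact, so the content is left exactness, i.e.\ the flatness of the double Burnside algebra $A$ of $\D$ as a right module over its left-free subalgebra $B$ --- a statement that at first sight looks too strong to be true. I would prove it by invoking Rosalie Chevalley's explicit description of this induction functor in Chapter $5$ of \cite{chevalley_these}: that description computes $\mathrm{Ind}^{\D}_{\D_0}(M)$ on each object directly in terms of the values of $M$, through operations that are manifestly exact in $M$ and require no assumption on the coefficients (they are defined over $\mathbb{Z}$), whence exactness. A more self-contained alternative I would also attempt factors the inclusion $\D_0\hookrightarrow\D$ through finitely many intermediate admissible categories, adjoining inflations one level at a time, and verifies left exactness at each step using the splitting $\Def^{G}_{G/N}\circ\Inf^{G}_{G/N}=\mathrm{id}_{G/N}$; the combinatorial bookkeeping involved is precisely what makes the appeal to \cite{chevalley_these} the preferable route. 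Once $(2)$ is in hand, conditions $(1)$, $(2)$ and $(3)$ together say exactly that $\mathcal{F}_{\D_0,k}$ is an exact Borel subcategory of $\FDk$.
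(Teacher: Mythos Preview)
Your proposal is correct and follows essentially the same route as the paper: Webb's Theorem for the highest-weight structure of $\mathcal{F}_{\D_0,k}$, the identical chain of adjunctions through Lemma~\ref{iso1} for condition~(3), and the appeal to Chevalley's explicit description of $^{l}\Ind_{\D_0}^{\D}$ for the exactness in condition~(2). The only difference is that for the simplicity of the $\Delta^{\D_0}_{H,V}$ the paper simply cites Proposition~9.1 of \cite{webb_strat2}, whereas you sketch a direct argument via Mackey and the characteristic hypothesis; your sketch is correct (the ``surviving element'' is the trace over the image $\Gamma\leq\Out(H)$ of $N_K(L)$, which is invertible since $|\Gamma|$ divides $|\Out(H)|$), though the reduction to a single conjugacy class $[L]$ deserves a word of justification.
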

\begin{proof}
By Webb's Theorem \ref{webb}, under these hypothesis the category $\mathcal{F}_{\D_0,k}$ is a highest-weight category. Moreover, by Proposition $9.1$ of \cite{webb_strat2}, all the standard functors $\Delta_{H,V}^{\D_0}$ are simple. Let $V$ be an arbitrary $k\Out(H)$-module. Let $F$ be any functor in $\FDk$. Then, using successive adjunctions and the isomorphism of Lemma \ref{iso1}, we have:
\begin{align*}
\Hom_{\FDk}\Big(\, ^{l}\Ind_{\D_0}^{\D} \big(\Delta_{H,V}^{\D_0}\big), F\Big) &\cong \Hom_{\mathcal{F}_{\D_0,k}}\Big(\Delta_{H,V}^{\D_0},\Res_{\D_0}^{\D}F\Big),\\
&\cong \Hom_{k\Out(H)}\Big(V,\underline{\Res_{\D_0}^{\D}(F)}(H)\Big),\\
&\cong \Hom_{k\Out(H)}\big(V,\underline{F}(H)\big)\\
&\cong \Hom_{\FDk}\big(\Delta_{H,V}^{\D},F\big).
\end{align*}
Since this holds for any functor $F$, we have that $\, ^{l}\Ind_{\D_0}^{\D}$ sends the standard functors for the category $\D_0$ to the standard functors in $\FDk$.

 It remains to check that the functor $\, ^{l}\Ind_{\D_0}^{\D}$ is exact. For this, we use the description of this induction due to Rosalie Chevalley (See Section $5.1$ of \cite{chevalley_these} for more details). Let $F$ be a deflation functor, then she proved\footnote{under stronger hypothesis on the field and the category $\D$. However, one can check that her proof can be generalized to our situation. } that 
\[\,^l (\Ind_{\D_0}^{\D}F\big)(G) = \Big(\bigoplus_{H\leqslant G} F(N_{G}(H)/H) \Big)_G \cong \bigoplus_{[H\leqslant G]} F(N_{G}(H)/H)_{N_{G}(H)}. \]
Here $[H\leqslant G]$ denotes a set of representatives of conjugacy classes of subgroups of $G$, and the group $N_{G}(H)$ acts on $F(N_{G}(H)/H)$ via conjugation. However, the action of $N_{G}(H)$ over $F(N_{G}(H)/H)$ is trivial. So, we have: 
\[\,^l (\Ind_{\D_0}^{\D}F\big)(G) \cong \bigoplus_{[H\leqslant G]} F(N_{G}(H)/H). \]
In particular, it becomes clear that the functor $\, ^l \Ind_{\D_0}^{\D}$ is exact. 
\end{proof}
For the double Burnside algebra, the situation is \emph{more complicated}.
\begin{theo}
Let $G$ be a finite group. Let $k$ be a field such that $\mathrm{char}(k)\nmid |\Out(H)|$ for $H\sqsubseteq G$. 
\begin{enumerate}
\item The left-free double Burnside algebra $kB_{0}(G,G)$ is a quasi-hereditary algebra with simple standard modules.
\item If $G$ is a non-vanishing group, then the double Burnside algebra is quasi-hereditary. However, the left-free double Burnside algebra $kB_{0}(G,G)$ is not always an exact Borel subalgebra. 
\end{enumerate}
\end{theo}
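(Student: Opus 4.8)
The plan is to handle the two items separately: the first requires building a heredity chain of the left-free algebra essentially by hand, while the second is a short consequence of the equivalence of Theorem \ref{ev_equiv} together with a counting argument.

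For (1), I would apply Webb's Theorem \ref{webb} to the admissible biset category $\mathcal{E}$ whose objects are the subgroups of $G$ and whose morphisms are the left-free bisets between them; it has finitely many isomorphism classes of objects, and since $\mathrm{char}(k)\nmid|\Out(H)|$ for every $H\leqslant G$, the category $\mathcal{F}_{\mathcal{E},k}$ is a highest-weight category, with all standard functors $\Delta_{H,V}^{\mathcal{E}}$ simple (Proposition $9.1$ of \cite{webb_strat2}, there being no inflation morphism in $\mathcal{E}$). The algebra $kB_{0}(G,G)$ is the endomorphism algebra of $G$ in $\mathcal{E}$. One might hope that evaluation at $G$ is an equivalence $\mathcal{F}_{\mathcal{E},k}\simeq kB_{0}(G,G)\Mod$, but this fails in general — for $G=A_{4}$ one already has $S_{C_{2}\times C_{2},V}^{\mathcal{E}}(A_{4})=0$ for the two-dimensional simple $kS_{3}$-module $V$ — so $G$ need not be non-vanishing for $\mathcal{E}$ and the highest-weight structure does not transport directly. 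Instead I would construct a heredity chain of $kB_{0}(G,G)$ indexed by the poset of conjugacy classes of subgroups of $G$ ordered by order: the left-free double Burnside algebra carries a filtration whose successive quotients are matrix algebras over the group algebras $k\Out(H)$ — which is exactly where its structure diverges from that of the full double Burnside algebra — and the hypothesis $\mathrm{char}(k)\nmid|\Out(H)|$ makes these quotients semisimple, forcing each layer of the filtration to be a heredity ideal; the mark morphism and ghost ring of Boltje--Danz provide the convenient bookkeeping. Matching the resulting standard modules with the evaluations of the $\Delta_{H,V}^{\mathcal{E}}$ then yields that the standard modules are simple. The technical heart — and the only place the hypothesis on the characteristic is genuinely needed — is the verification, layer by layer, that each ideal of this chain is idempotent, squares to itself, and is projective as a one-sided module; this is where I expect the real work to lie.

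For the first half of (2), if $G$ is a non-vanishing group then, by Theorem \ref{ev_equiv}, evaluation at $G$ is an equivalence $\FSk\simeq kB(G,G)\Mod$; since $\Sigma(G)$ is replete with finitely many isomorphism classes of objects and $\mathrm{char}(k)\nmid|\Out(H)|$ for all $H\sqsubseteq G$, Webb's Theorem \ref{webb} makes $\FSk$ a highest-weight category, hence so is $kB(G,G)\Mod$, and $kB(G,G)$ is quasi-hereditary with standard modules the $\Delta_{H,V}(G)$.

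For the second half of (2), I would exhibit a non-vanishing group $G$ for which $kB_{0}(G,G)$ has strictly fewer simple modules than $kB(G,G)$; since an exact Borel subalgebra of a quasi-hereditary algebra with $n$ simple modules must itself have $n$ simple modules, this rules out $kB_{0}(G,G)$ being one. By Corollary \ref{lift_proj} and the non-vanishing hypothesis, the simple $kB(G,G)$-modules are parametrised by all pairs $(H,V)$ with $H\sqsubseteq G$ and $V$ a simple $k\Out(H)$-module, whereas the simple $kB_{0}(G,G)$-modules are the non-zero $S_{H,V}^{\mathcal{E}}(G)$, whose $H$ is necessarily isomorphic to a \emph{subgroup} of $G$. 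It therefore suffices to take a non-vanishing $G$ possessing a subquotient not isomorphic to any of its subgroups: I would use $G=M_{2}(2,2)=\langle a,b\mid a^{4}=b^{4}=1,\ bab^{-1}=a^{-1}\rangle$ of order $16$, which is $s$-self dual (take $M=1$ in the classification recalled above) and hence non-vanishing over any field, together with $k$ of characteristic different from $2$ and $3$ so that $\mathrm{char}(k)\nmid|\Out(H)|$ for every $H\sqsubseteq G$. Here $G/\langle b^{2}\rangle\cong D_{8}$ is a subquotient of $G$, while the three maximal subgroups of $G$ all contain the central Klein four subgroup $\langle a^{2},b^{2}\rangle=\Phi(G)$, hence are abelian, and since $G$ has $2$-rank $2$ they are all isomorphic to $C_{4}\times C_{2}$; in particular no subgroup of $G$ is isomorphic to $D_{8}$. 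Thus the pairs $(D_{8},V)$ index simple $kB(G,G)$-modules with no counterpart over $kB_{0}(G,G)$, the two counts differ, and $kB_{0}(G,G)$ is not an exact Borel subalgebra of $kB(G,G)$.
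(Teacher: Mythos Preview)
Your treatment of part~(2) matches the paper's: the quasi-heredity claim is exactly Webb's theorem transported across the equivalence of Theorem~\ref{ev_equiv}, and the counting obstruction to $kB_{0}(G,G)$ being an exact Borel subalgebra is the paper's argument as well. The paper phrases it for the general class of $s$-self-dual groups that are not self-dual, while you supply the explicit instance $G=M_{2}(2,2)$ with $D_{8}$ as the subquotient that is not a subgroup; your verification that the three maximal subgroups are abelian (hence no $D_{8}$ inside) and that $G/\langle b^{2}\rangle\cong D_{8}$ is correct and makes the argument concrete.

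For part~(1), however, you take a harder road than necessary. You correctly observe that evaluation at $G$ is not an equivalence $\mathcal{F}_{\mathcal{E},k}\to kB_{0}(G,G)\Mod$ in general, and then abandon the functorial highest-weight structure in favour of building a heredity chain by hand via the Boltje--Danz ghost ring. The paper's point is that the failure of the equivalence does not matter here. Once Webb's theorem gives a standard filtration
\[
0=M_{0}\subset M_{1}\subset\cdots\subset M_{n}=P_{H,V}
\]
of each indecomposable projective functor, with simple subquotients $M_{i}/M_{i-1}\cong\Delta_{H_{i},V_{i}}=S_{H_{i},V_{i}}$ and $H_{i}\sqsubseteq H$, exactness of evaluation yields a chain
\[
0=M_{0}(G)\subseteq M_{1}(G)\subseteq\cdots\subseteq M_{n}(G)=P_{H,V}(G)
\]
whose successive quotients $\Delta_{H_{i},V_{i}}(G)$ are each either zero or a simple $kB_{0}(G,G)$-module. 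Deleting the repeated terms leaves a genuine filtration of the indecomposable projective $P_{H,V}(G)$ (which is such by Lemma~\ref{ev}) by simple standard modules, with the subquotient ordering on the $H_{i}$ intact. Since the standard modules coincide with the simple modules, this already forces quasi-heredity with simple standards; no heredity ideal need be checked by hand and the ghost ring is never invoked. Your proposed route is plausible but leaves the genuinely technical step (projectivity and idempotence of each layer) unproved, whereas the paper's argument is complete in a paragraph.
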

\begin{proof}
\begin{enumerate}
\item We denote by $\Sigma(G)_0$ the subcategory of the biset category consisting of the subquotients of $G$ and where the morphisms are given by the left-free double Burnside modules. In general the category of modules over $kB_{0}(G,G)$ is not equivalent to the category $\mathcal{F}_{\Sigma(G)_0,k}$. Still, by Webb's Theorem \ref{webb}, the category $\mathcal{F}_{\Sigma(G)_0,k}$ is a highest-weight category with simple standard functors. Let $\Lambda$ be the set of $S_{H,V}(G)$ where $S_{H,V}$ runs a set of representatives of simple functors of $\mathcal{F}_{\Sigma(G)_0,k}$ such that $S_{H,V}(G)\neq 0$. Then, by the arguments of Lemma \ref{ev}, the set $\Lambda$ is a complete set of representatives of simple modules over the left-free double Burnside algebra. Moreover, if $P_{H,V}$ is a projective cover of $S_{H,V}$, then $P_{H,V}(G)$ is a projective cover of $S_{H,V}(G)$. The standard modules will be the evaluation at $G$ of the standard functors $\Delta_{H,V}$ such that $S_{H,V}(G)\neq 0$. By hypothesis, there are subfunctors $0=M_0\subset M_1 \subset M_2 \subset \cdots M_n = P_{H,V}$ such that $M_{i}/M_{i-1} \cong \Delta_{H_i,V_i}$ where $V_i$ is a simple $k\Out(H_i)$-module and $H_i$ is a strict subquotient of $H$. Since the evaluation functor is exact, we have a filtration of $P_{H,V}(G)$. Moreover, $M_{i}(G)/M_{i-1}(G) \cong \Delta_{H_i,V_i}(G) = S_{H_i,V_i}(G)$. So this evaluation is either zero or equal to the simple module $S_{H_i,V_i}(G)$. This shows that the sequence of the $M_{i}(G)$ is not a sequence of strict submodules. We let $N_0 \subset N_1 \subset \cdots N_k = P_{H,V}(G)$ be the strict filtration obtained by removing the multiple terms.  Then by construction $N_{i}/N_{i-1} \cong \Delta_{H_j,V_j}(G)$ for some simple module $S_{H_j,V_j}(G)$ such that $H_j$ is a strict subquotient of $H$. This implies that every projective indecomposable module is filtered by standard modules. Moreover the standard modules that appear in a filtration of the projective indecomposable module $P_{H,V}(G)$ are indexed by groups $K$ such that $K$ is isomorphic to a strict subquotient of $H$. Finally, the standard modules are simple.
\item See Theorem $2.1$ of \cite{qh_doubleburnside} for a proof without using the equivalence of Theorem \ref{ev_equiv}. In this case the evaluation at $G$ is an equivalence of categories between $\FSk$ and $kB(G,G)\Mod$. The result follows from the fact that under these hypothesis, the category $\FSk$ is a highest-weight category. If $G$ is a non-vanishing group, this result does not implies that the left-free double Burnside algebra is an exact Borel subalgebra of $kB(G,G)$. Indeed the last one may have more simple objects. For example if $G$ is a $s$-self dual group which is not self dual. That is a group such that every subgroup is isomorphic to a quotient but there is a quotient $H$ which is not isomorphic to a subgroup. This is clearly a non-vanishing group. The left-free bisets of $B_{0}(G,H)$ are linear combinations of transitive bisets of the form
\[  \Ind_{C} ^{G} \circ \Iso(f) \circ \Def^{B}_{B/A} \circ \Res^{H}_{B},\]
where $(B,A)$ is a section of $H$, $C$ is a subgroup of $G$ and $f$ is an isomorphism from $B/A$ to $C$. Since $H$ is not isomorphic to a subgroup of $G$, then $B/A$ has to be a strict subquotient of $H$. In particular the quotient of $kB_{0}(G,H)$ by the ideal consisting of the morphisms factorizing strictly below $H$ is zero. This implies that for any simple $k\Out(H)$-module $V$, we have $\Delta_{H,V}^{\Sigma(G)_0}(G) = 0$. But by Theorem \ref{6.4}, the simple deflation functor $S_{H,V}^{\Sigma(G)_0}$ is isomorphic to the corresponding standard functor. In particular, we have $S_{H,V}^{\Sigma(G)_0}(G)=0$. 
\end{enumerate}
\end{proof}
As corollaries we have:
\begin{coro}
Let $G$ be a finite group. Let $k$ be a field such that $|\Out(H)|$ is invertible in $k$ for every subquotient $H$ of $G$. Then, the global dimension of the left-free double Burnside algebra $kB_0(G,G)$ is finite.
\end{coro}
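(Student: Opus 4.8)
The plan is to read the statement off from the first part of the preceding theorem. Under the hypothesis that $|\Out(H)|$ is invertible in $k$ for every subquotient $H$ of $G$ --- which, $k$ being a field, is exactly the condition $\mathrm{char}(k)\nmid |\Out(H)|$ for $H\sqsubseteq G$ --- that theorem asserts that $kB_0(G,G)$ is a quasi-hereditary algebra, in fact one with simple standard modules. So the only remaining ingredient is the classical fact that a quasi-hereditary algebra has finite global dimension, and the corollary follows by combining the two.

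For completeness I would recall the (short) argument for the global dimension bound. Let $A$ be quasi-hereditary with a heredity chain $0=J_0\subset J_1\subset\cdots\subset J_m=A$, and put $J=J_1$. Then $J$ is a \emph{heredity ideal}: $J^2=J$, $J$ is projective as a left $A$-module, and $J=AeA$ with $eAe$ semisimple (here even a product of division rings, the standard modules being simple). One has the classical estimate $\mathrm{gl.dim}\,A\leqslant \mathrm{gl.dim}(A/J)+2\,\mathrm{pd}_A(A/J)+2$, while the projectivity of $J$ gives $\mathrm{pd}_A(A/J)<\infty$; since $A/J$ is again quasi-hereditary with heredity chain of length $m-1$, an induction on $m$ finishes the argument, the base case $m=0$ (equivalently $A$ semisimple) having global dimension $0$. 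This in fact yields the explicit bound $\mathrm{gl.dim}\,kB_0(G,G)\leqslant 2n-2$, where $n$ is the number of isomorphism classes of simple $kB_0(G,G)$-modules --- equivalently, by Lemma \ref{ev}, the number of pairs $(H,V)$ with $H\sqsubseteq G$ and $V$ a simple $k\Out(H)$-module satisfying $S_{H,V}(G)\neq 0$ in $\mathcal{F}_{\Sigma(G)_0,k}$.

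There is essentially no obstacle here: the substantive work --- exhibiting the heredity chain, i.e. the filtration of the indecomposable projectives by the evaluations of the standard functors $\Delta_{H,V}$ --- was already carried out in the proof of the previous theorem. The only point deserving a line of care is the translation between the two phrasings of the hypothesis, ``invertible in $k$'' versus ``$\mathrm{char}(k)$ does not divide'', which coincide over a field; after that the corollary is a purely formal consequence, and one could equally well just invoke Cline--Parshall--Scott or Dlab--Ringel for the global dimension bound and be done.
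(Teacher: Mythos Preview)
Your proposal is correct and follows exactly the intended route: the paper states this corollary with no proof at all, leaving it as an immediate consequence of the preceding theorem (that $kB_0(G,G)$ is quasi-hereditary) together with the standard fact that quasi-hereditary algebras have finite global dimension. Your additional recollection of the heredity-chain induction and the explicit bound $2n-2$ goes beyond what the paper supplies, but is consistent with it.
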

For the double Burnside algebra, we need more hypothesis. 
\begin{coro}
Let $G$ be a finite group. Let $k$ be a field such that $|\Out(H)|$ is invertible in $k$ for every subquotient $H$ of $G$. Then, if $G$ is a $\NV_k$-group we have:
\begin{enumerate}
\item The global dimension of $kB(G,G)$ is finite.
\item The Cartan matrix of $kB(G,G)$ has determinant $1$. 
\end{enumerate}
\end{coro}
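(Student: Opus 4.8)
The plan is to reduce both statements to classical properties of quasi-hereditary algebras, using the structural results already established. Under the stated hypotheses — $G$ is a $\NV_k$-group and $|\Out(H)|$ is invertible in $k$ for every $H\sqsubseteq G$ — the preceding theorem already shows that $kB(G,G)$ is quasi-hereditary: indeed $\Sigma(G)$ has only finitely many isomorphism classes of objects, so Webb's Theorem \ref{webb} makes $\FSk$ a highest-weight category, and since $G$ is non-vanishing, Theorem \ref{ev_equiv} identifies $\FSk$ with $kB(G,G)\Mod$. Note also that $kB(G,G)$ is a finite-dimensional $k$-algebra with identity $[(G\times G)/\Delta(G)]$ and finitely many simple modules, so the general machinery applies without modification. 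It then remains only to invoke two standard facts.

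First, for assertion $(1)$: every quasi-hereditary algebra has finite global dimension. I would recall the classical argument of Cline--Parshall--Scott and Dlab--Ringel: by induction along the defining poset one shows that each standard module $\Delta(\lambda)$ has finite projective dimension, since the kernel of $P(\lambda)\twoheadrightarrow \Delta(\lambda)$ is filtered by standard modules $\Delta(\mu)$ with $\mu>\lambda$; dually the costandard modules have finite injective dimension, and a short homological argument then yields $\mathrm{gldim}\,kB(G,G)<\infty$ (in fact bounded by $2(n-1)$, with $n$ the number of simple $kB(G,G)$-modules). Nothing about bisets is needed here beyond the quasi-heredity of $kB(G,G)$.

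Second, for assertion $(2)$: one uses the general form of BGG-reciprocity valid in any quasi-hereditary algebra. Write $L(\lambda),\Delta(\lambda),\nabla(\lambda),P(\lambda)$ for the simple, standard, costandard and indecomposable projective $kB(G,G)$-modules indexed by $\lambda$ in the poset, and let $D,D'$ be the matrices with entries $D_{\mu\lambda}=[\Delta(\mu):L(\lambda)]$ and $D'_{\mu\lambda}=[\nabla(\mu):L(\lambda)]$. Both are lower unitriangular with respect to the defining order. BGG-reciprocity gives $(P(\lambda):\Delta(\mu))=[\nabla(\mu):L(\lambda)]=D'_{\mu\lambda}$, and combining this with the $\Delta$-filtration of $P(\lambda)$ expresses the Cartan matrix as $C=D'^{\mathrm{tr}}D$. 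Hence $\det C=\det D'\cdot\det D=1$.

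The point I would stress is that there is essentially no obstacle: all the genuine content sits in Webb's Theorem \ref{webb} and in the equivalence of Theorem \ref{ev_equiv}, which together already deliver the quasi-heredity of $kB(G,G)$; the corollary is then pure quasi-hereditary bookkeeping. The only thing worth double-checking is that the ambient hypotheses of these standard theorems (finitely many simples, a unit element, finite dimension) are met, which is immediate, and that the passage through the equivalence $\FSk\simeq kB(G,G)\Mod$ matches standard functors on the two sides with one another — but this is recorded in the proof of the preceding theorem.
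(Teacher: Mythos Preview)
Your proposal is correct and matches the paper's intent: the paper states this corollary without proof, treating both conclusions as immediate consequences of the quasi-heredity of $kB(G,G)$ established in the preceding theorem, which is exactly the reduction you carry out. Your invocation of the standard Cline--Parshall--Scott/Dlab--Ringel facts (finite global dimension, and Cartan determinant $1$ via the unitriangular factorisation $C=D'^{\mathrm{tr}}D$ coming from BGG-reciprocity) is precisely the ``quasi-hereditary bookkeeping'' the author has in mind.
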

The fact that the double Burnside algebra of a non-vanishing group is quasi-hereditary is a direct consequence of the fact that the category of biset functors is a highest-weight category. However, if the group $G$ is a vanishing group, one can wonder if the double Burnside algebra is still quasi-hereditary. In particular, it may be possible to find a more suitable order on the set of simple modules over the double Burnside algebra in order to avoid the vanishing problems. In \cite{qh_doubleburnside}, we proved that the global dimension of $\mathbb{C}B(A_5,A_5)$ is infinite. In particular, this shows that such a better ordering does not exist for the double Burnside algebra for $A_5$. We have. 
\begin{prop}
The double Burnside algebra $\mathbb{C}B(A_5,A_5)$ is \emph{not} quasi-hereditary. 
\end{prop}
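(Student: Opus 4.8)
The plan is to contradict quasi-heredity through global dimension. Recall the classical fact that a quasi-hereditary algebra, having a heredity chain of finite length, has finite global dimension (indeed, with $n$ simple modules it has global dimension at most $2n-2$). Moreover, for a finite-dimensional algebra the global dimension is the maximum of the global dimensions of its blocks, since $\mathrm{Ext}$-groups between modules lying in different blocks vanish. Hence it suffices to exhibit a single block of $\mathbb{C}B(A_5,A_5)$ of infinite global dimension.

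Such a block is produced by a vanishing phenomenon. First I would recall why $A_5$ is a vanishing group: one has $\Sigma_{C_2\times C_2}(A_5) = \{(C_2\times C_2,1)\}$, the normalizer $N_{A_5}(C_2\times C_2) = A_4$ acts on $C_2\times C_2$ through $A_4/(C_2\times C_2)\cong C_3 \le \Out(C_2\times C_2)\cong S_3$, so by Proposition $7.1$ of \cite{bst2} the $2$-dimensional simple $\mathbb{C}S_3$-module $V$ gives $S_{C_2\times C_2,V}(A_5)\cong \mathrm{Tr}_1^{C_3}(V)=0$. The resulting gap between the simple biset functors over $A_5$ and the simple $\mathbb{C}B(A_5,A_5)$-modules is precisely what forces one block of $\mathbb{C}B(A_5,A_5)$ to be self-injective but not semisimple; this is established in \cite{qh_doubleburnside}, where the infinitude of $\mathrm{gldim}\,\mathbb{C}B(A_5,A_5)$ is proved. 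A non-semisimple self-injective algebra has infinite global dimension, because the syzygy operator is invertible on its stable module category, so any non-projective simple module has infinite projective dimension.

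Combining the two steps, $\mathbb{C}B(A_5,A_5)$ has infinite global dimension and therefore is not quasi-hereditary. The only genuinely delicate point is the identification of the self-injective, non-semisimple block — equivalently, the verification that $\mathrm{gldim}\,\mathbb{C}B(A_5,A_5)=\infty$ — which is carried out in \cite{qh_doubleburnside} and relies on a concrete analysis of the $\mathrm{Ext}$-quiver of the algebra; once this is granted, the passage to ``not quasi-hereditary'' is purely formal.
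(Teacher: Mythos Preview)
Your proposal is correct and matches the paper's approach: the paper's proof is simply a one-line citation of Proposition~3.3 of \cite{qh_doubleburnside}, and the argument you sketch (infinite global dimension via a non-semisimple self-injective block, forced by the vanishing $S_{C_2\times C_2,V}(A_5)=0$) is exactly the content of that reference, as confirmed by the surrounding text of the paper itself. Your write-up is in fact a faithful unpacking of what the citation is pointing to.
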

\begin{proof}
Proposition $3.3$ of  \cite{qh_doubleburnside}.
\end{proof}
\section{Semi-simplicity revisited}
In this section we revisit the semi-simple property of the double Burnside algebra and the category of biset functors. By results of Barker and Bouc, we know precisely when both objects are semi-simple. It is clear that the result of Barker on the category of biset functors implies the result of Bouc about the double Burnside algebra. Curiously, Barker's result can be reformulated as follows. Let $\D$ be a replete biset category and $k$ be a field. Then, the category $\FDk$ is semi-simple if and only if the endomorphism algebra of every object of $\D$ is semi-simple. In general, it is easy to find a category where all the endomorphism algebras of objects are semi-simple but its category of representations is not semi-simple. Here we show that this phenomenon is related to the \emph{generating} relation. As corollary, this gives a rather simple proof of Barker's Theorem. We also give a useful characterization of the semi-simple property in terms of the so-called trivial object. More precisely, we show that similarly to the case of group algebras, these categories are semi-simple if and only if the trivial object is projective. This characterization will be used in the last Section of this article. 
\newline\indent We start this section by recalling when the double Burnside algebra is semi-simple. 
\begin{theo}[Bouc]\label{bouc_ss}
Let $k$ be a field and $G$ be a finite group. The double Burnside algebra is semi-simple if and only if $G$ is cyclic and $\mathrm{char}(k)\nmid \phi(|G|)$.
\end{theo}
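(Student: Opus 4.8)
The plan is to establish the two implications separately: the ``only if'' direction via Corollary~\ref{burnside_proj} and a rank computation for the matrix of double cosets, and the ``if'' direction via the equivalence of Theorem~\ref{ev_equiv} together with Webb's Theorem~\ref{webb}.

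\textbf{``Only if''.} Suppose $kB(G,G)$ is semisimple. By Corollary~\ref{burnside_proj} the Burnside module $kB(G)=L_{1,k}(G)$ is an indecomposable projective $kB(G,G)$-module; being semisimple and indecomposable it is simple, hence equal to its top $S_{1,k}(G)$, so the maximal subfunctor satisfies $J_{1,k}(G)=0$. Reading formula~\eqref{jhv} with $H=1$ and $V=k$ gives $J_{1,k}(G)=\ker(M_G)$, where $M_G=\bigl(|P\backslash G/Q|\bigr)_{[P],[Q]\in[s_G]}$ is the matrix of double-coset numbers; thus $M_G$ is invertible over $k$. But $M_G$ is the Gram matrix of the rational permutation modules $\mathbb{Q}[G/P]$ for the non-degenerate form $(a,b)\mapsto\dim_{\mathbb{Q}}\Hom_{\mathbb{Q}G}(a,b)$, so $\mathrm{rank}_k(M_G)\leqslant\mathrm{rank}_{\mathbb{Q}}(M_G)$; and by Artin's induction theorem the $\mathbb{Q}$-span of the modules $\mathbb{Q}[G/P]$ has dimension the number of conjugacy classes of \emph{cyclic} subgroups of $G$. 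Invertibility thus forces $|[s_G]|$ to equal that number, i.e.\ every subgroup of $G$ is cyclic, so $G=C_n$ is cyclic. Finally, by Proposition~$4.3.2$ of \cite{bouc_biset} the algebra $k\Out(C_n)\cong k(\mathbb{Z}/n\mathbb{Z})^{\times}$ is a quotient of $kB(C_n,C_n)$, hence semisimple, and Maschke's theorem gives $\mathrm{char}(k)\nmid|(\mathbb{Z}/n\mathbb{Z})^{\times}|=\phi(n)$.

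\textbf{``If''.} Suppose $G=C_n$ with $\mathrm{char}(k)\nmid\phi(n)$. Being abelian, $C_n$ is an $\mathrm{NV}_k$-group (Lemma~\ref{abelian}), so by Theorem~\ref{ev_equiv} evaluation at $C_n$ is an equivalence $\FSk\simeq kB(C_n,C_n)\Mod$; hence it is enough to show that $\FSk=\mathcal{F}_{\Sigma(C_n),k}$ is semisimple. Every subquotient of $C_n$ is a $C_d$ with $d\mid n$, and $\phi(d)\mid\phi(n)$, so $\mathrm{char}(k)\nmid|\Out(C_d)|$ for all of them; hence Webb's Theorem~\ref{webb} makes $\mathcal{F}_{\Sigma(C_n),k}$ a highest-weight category with standard objects the $\Delta_{C_d,V}$. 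Since the natural duality of biset functors is exact, preserves simplicity and interchanges standard with costandard objects, such a highest-weight category is semisimple as soon as every standard object $\Delta_{C_d,V}$ is simple. I would verify this by a dimension count: the butterfly decomposition~\eqref{butterfly} identifies $\overrightarrow{\D}(C_e,C_d)$ with a free right $k\Out(C_d)$-module of rank $\tau(e/d)$ (one basis block $\Ind_{C_f}^{C_e}\circ\Inf_{C_f/N}^{C_f}\circ\Iso$ for each $f$ with $d\mid f\mid e$), so that $\dim_k\Delta_{C_d,V}(C_e)=\tau(e/d)\dim_kV$; on the other hand every subgroup of a cyclic group is cyclic, which makes the hypothesis ``$T$ is a direct product of a $p$-group and a cyclic group'' of Theorem~\ref{dim} vacuous and the normalizer twists in the general dimension formula (Theorem~$7.1$ of \cite{bst1}) trivial, so $\dim_k S_{C_d,V}(C_e)=\tau(e/d)\dim_kV$ as well. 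As $S_{C_d,V}$ is a quotient of $\Delta_{C_d,V}$ and the two have equal dimension at every $C_e$, they coincide; hence all standard objects are simple and $\mathcal{F}_{\Sigma(C_n),k}$ is semisimple. (For $d=1$ the matching is exactly the invertibility of $M_{C_e}$ over $k$: one has $\det M_{C_e}=\prod_{d\mid e}\phi(d)$, whose prime divisors all divide $\phi(e)$, and $\phi(e)\mid\phi(n)$.)

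\textbf{Main obstacle.} The delicate part is this last dimension bookkeeping: one must check that $\overrightarrow{\D}(C_e,C_d)$ is indeed $k\Out(C_d)$-free of the asserted rank, that the Bouc--Stancu--Th\'evenaz formula for $\dim_k S_{C_d,V}(C_e)$ really collapses to $\tau(e/d)\dim_kV$ when every group in sight is cyclic, and that $\mathrm{char}(k)\nmid\phi(n)$ propagates to $\mathrm{char}(k)\nmid\det M_{C_e}$ for all $e\mid n$. (One could bypass Webb and invoke Barker's reformulation directly instead, but this merely relocates the same computation to each subquotient of $C_n$.) The reduction to an algebraically closed ground field convenient for the count is harmless, by Lemma~\ref{extension} together with the invariance of semisimplicity of a finite-dimensional algebra under field extension.
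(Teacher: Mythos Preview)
The paper does not prove this statement at all: its entire proof is the citation ``Proposition~6.1.7 of \cite{bouc_biset}''. So your attempt is necessarily on a different footing and must be judged on its own.

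Your ``only if'' direction is correct and rather elegant. The identification of $J_{1,k}(G)$ with $\ker M_G$ via formula~\eqref{jhv}, combined with the Artin-induction bound $\mathrm{rank}_{\mathbb{Q}}M_G=\#\{\text{cyclic classes}\}$ and the inequality $\mathrm{rank}_k\leqslant\mathrm{rank}_{\mathbb{Q}}$ for integer matrices, cleanly forces every subgroup of $G$ (hence $G$ itself) to be cyclic; the quotient map to $k\Out(C_n)$ then gives the characteristic condition by Maschke.

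The ``if'' direction has a real gap, which you yourself flag. Your strategy --- transfer via Theorem~\ref{ev_equiv} and show that every $\Delta_{C_d,V}$ in the highest-weight category $\mathcal{F}_{\Sigma(C_n),k}$ is simple by a dimension match --- is sound in outline, and your computation $\dim_k\Delta_{C_d,V}(C_e)=\tau(e/d)\dim_kV$ is correct. But the matching claim $\dim_k S_{C_d,V}(C_e)=\tau(e/d)\dim_kV$ is not established. Theorem~\ref{dim} does not apply: it is stated only for $p$-groups, only for $V=k$, and only in characteristic zero. The general formula of \cite[Theorem~7.1]{bst1} is the rank of a bilinear form built from the character of $V$, and ``the normalizer twists are trivial because everything is abelian'' does not by itself compute that rank; one still has to show the form is non-degenerate of the asserted size, and this is exactly where the hypothesis $\mathrm{char}(k)\nmid\phi(n)$ must enter. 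You acknowledge this obstacle without resolving it.

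There is also a structural point worth noting. In the paper, Theorem~\ref{bouc_ss} is an \emph{input} to Barker's Theorem~\ref{semi_simple}, not a consequence. Your ``if'' argument is in effect an independent proof of the special case of Barker for $\D=\Sigma(C_n)$, followed by transfer along Theorem~\ref{ev_equiv}. That reversal is legitimate, but it means the entire weight rests on the unproved dimension identity. A route closer to Bouc's original --- constructing explicit orthogonal idempotents in $kB(C_n,C_n)$ indexed by divisors of $n$, or showing $\Delta_{C_d,V}\cong\nabla_{C_d,V}$ directly via the opposite-biset duality (which for cyclic groups matches each section $(C_f,C_{f/d})$ with itself) --- would avoid the appeal to an external dimension formula.
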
 
\begin{proof}
Proposition $6.1.7$ of \cite{bouc_biset}.
\end{proof}
We need the following Lemma.
\begin{lemma}\label{L_fun}
Let $k$ be a field. Let $\D$ be a replete biset category. Let $H$ and $K$ be two groups such that $H$ is $k$-generated by $K$. Let $V$ be a $kB(H,H)$-module. Then, 
$$L_{K,L_{H,V}(K)} \cong L_{H,V}. $$ 
\end{lemma}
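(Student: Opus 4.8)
The plan is to exploit the general machinery relating the functors $L_{G,-}$ to the evaluation functor, together with the fact that $k$-generation means the Yoneda functor $Y_H$ is a direct summand of a power of $Y_K$ (Lemma \ref{yoneda_fac}). First I would recall that, for any admissible biset category and any object $K$, the functor $L_{K,-} : kB(K,K)\Mod \to \FDk$ is left adjoint to $ev_K$, and that $L_{K,-}$ applied to the regular module $kB(K,K)$ recovers the representable functor $Y_K$; more generally $L_{K,ev_K(F)} \cong F$ whenever $F$ lies in the image of $L_{K,-}$, i.e. whenever $F$ is "generated in degree $K$". So the real content is that $L_{H,V}$ is generated in degree $K$, after which applying $L_{K,-}$ to its evaluation at $K$ returns it unchanged.

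Concretely, I would argue as follows. Since $H \vdash_k K$, there exist $U_i \in kB(H,K)$ and $W_i \in kB(K,H)$ with $\sum_i U_i \times_K W_i = \mathrm{id}_H$ in $kB(H,H)$. Given any functor $F = L_{H,V}$ and any object $X \in \D$, I want to show the canonical counit-type map $kB(X,K)\otimes_{kB(K,K)} F(K) \to F(X)$, $\phi \otimes f \mapsto F(\phi)(f)$, is an isomorphism; this map is precisely the component at $X$ of the unit $F \to L_{K,ev_K(F)}$ composed with nothing, or rather the comparison morphism $L_{K,F(K)} \to F$. Surjectivity and injectivity both follow by constructing an explicit inverse: for $x \in F(X)$, pick (using $H\vdash_k K$) elements realizing $\mathrm{id}_H$, note that every element of $F(X)$ for $F = L_{H,V} = \D(-,H)\otimes_{kB(H,H)} V$ is a sum $\sum_j \psi_j \otimes v_j$ with $\psi_j \in kB(X,H)$, rewrite $\psi_j = \psi_j \circ \mathrm{id}_H = \sum_i (\psi_j \circ U_i) \circ W_i$, and observe that $\psi_j \circ U_i \in kB(X,K)$ while $W_i \otimes v_j$ lives in $F(K) = kB(K,H)\otimes_{kB(H,H)}V$. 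This produces a well-defined two-sided inverse, exactly mirroring the explicit $\delta_F$/$\epsilon_F$ argument in the proof of Theorem \ref{ev_equiv}, except that here $H$ need not equal $K$ and the equivalence is only claimed for functors of the form $L_{H,V}$ rather than for all functors.

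Having established $L_{K,L_{H,V}(K)} \cong L_{H,V}$ as functors, one still must check this isomorphism is natural/canonical enough that nothing is lost, but that is routine: the comparison morphism is natural in $F$, and I would just verify it is the standard one. An alternative, cleaner route I would mention: by Lemma \ref{yoneda_fac}, $Y_H \mid (Y_K)^m$; apply the additive functor "$L_{K,ev_K(-)}$" (which is isomorphic to the identity on each $Y_K$, hence on every direct summand of a power of $Y_K$, in particular on $Y_H$), and then extend from representables to $L_{H,V}$ using that $L_{H,V}$ is a cokernel of a map between sums of copies of $Y_H$ together with right-exactness of both $L_{K,-}$ and $ev_K$ (so $L_{K,ev_K(-)}$ is right exact and agrees with the identity on a generating set of projectives $\{Y_H\}$, forcing agreement on all of $L_{H,V}$). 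Either way works.

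The main obstacle I anticipate is purely bookkeeping: making sure the various identifications $F(K) = kB(K,H)\otimes_{kB(H,H)}V$, the tensor-over-$kB(K,K)$ structure on $kB(X,K)$, and the composition maps are threaded together correctly so that the candidate inverse is genuinely well-defined on the tensor products (independence of the choice of representatives $\sum_j \psi_j \otimes v_j$, and balancedness over $kB(K,K)$). There is no deep difficulty — it is the same adjunction-and-explicit-inverse pattern already used for Theorem \ref{ev_equiv} — but the fact that $H \ne K$ here means one must be slightly more careful about which algebra each tensor product is taken over. I would present the explicit-inverse version as the main proof and remark on the $Y_H \mid (Y_K)^m$ shortcut as a conceptual alternative.
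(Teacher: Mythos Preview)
Your main argument is essentially the paper's own proof: the paper also takes the counit $\phi: L_{K,L_{H,V}(K)}\to L_{H,V}$, writes $\mathrm{id}_H=\sum_i U_i\times_K W_i$ from $H\vdash_k K$, defines the candidate inverse $\psi_G(U\otimes v)=\sum_i (U\times_H U_i)\otimes(W_i\otimes v)$, checks balancedness over $kB(H,H)$, and concludes. Your anticipated ``bookkeeping obstacle'' is exactly the one computation the paper spells out. (A minor slip: the comparison map $L_{K,F(K)}\to F$ is the \emph{counit}, not the unit, of $L_{K,-}\dashv ev_K$; you correct yourself immediately.)

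Your alternative route via $Y_H\mid (Y_K)^m$ and right-exactness is not in the paper and is worth noting: it trades the explicit inverse for a structural argument (the counit $L_{K,ev_K(-)}\to\mathrm{id}$ is an isomorphism on $Y_K$, hence on summands of powers of $Y_K$, hence on $Y_H$; then a presentation $Y_H^{(J)}\to Y_H^{(I)}\to L_{H,V}\to 0$ together with right-exactness of $L_{K,-}\circ ev_K$ finishes it). This is cleaner conceptually and avoids the balancedness check, at the cost of invoking a presentation of $V$; the paper's version is more self-contained and makes the isomorphism completely explicit.
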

\begin{proof}
The co-unit of the adjunction between the evaluation at $K$ and $L_{K,-}$ gives a morphism from $L_{K,L_{H,V}(K)}$ to $L_{H,V}$. This morphism $\phi$ is defined on a group $G$ by $$\phi_{G}\Big(W\otimes \big(U\otimes v\big)\Big) = \big( W\times_{K} U\big) \otimes v,$$
for $W\in kB(G,K)$, $U\in kB(K,H)$ and $v\in V$. Since $H$ is generated by $K$, there are $U_i \in kB(H,K)$ and $W_i \in kB(K,H)$ for $i=1,\cdots, n$ such that $id_{H} = \sum_{i=1}^{n} U_i \times_{K} W_i$. We define $\psi_{G} : kB(G,H)\otimes_{k} V \to L_{K,L_{H,V}(K)}$ by 
$$\psi_{G}(U\otimes v) = \sum_{i=1}^{n} \big(U\times_{H} U_i\big)\otimes \big( W_i\otimes v\big), $$
where $U\in kB(G,H)$ and $v\in V$. If $\alpha \in kB(H,H)$, we have:
\begin{align*}
\psi_{G}\big((U\times_H \alpha )\otimes v\big) &= \sum_{i=1}^{n} \big(U\times_{H} \alpha \times_{H} U_i \big) \otimes \big( W_i\otimes v\big),\\
&= \sum_{i=1}^{n} \Big( U \times_{H} (\sum_{j=1}^{n}U_j\times_K W_j)\times_H \alpha \times_{H} U_i \Big) \otimes \big( W_i\otimes v\big),\\
&= \sum_{i,j = 1}^{n} \big( U\times_{H} U_j \big)\otimes \Big(\big(W_j \times_H \alpha \times_H U_i \times_K W_i\big)\otimes v\Big),\\
&=\sum_{j = 1}^{n} \big( U\times_{H} U_j \big)\otimes \Big(W_j \times_H \alpha \times_H\big(\sum_{i=1}^{n} U_i \times_K W_i\big)\otimes v\Big),\\
&=\psi_{G}\big(U \otimes(\alpha \times_H v)\big).
\end{align*}
So $\psi$ can be factorized as a morphism from $kB(G,H)\otimes_{kB(H,H)} V$ to $L_{K,L_{H,V}(K)}$. Moreover, it is clear that $\phi_G$ and $\psi_G$ are two inverse isomorphisms. 
\end{proof}
For the category of biset functors, we have the following Theorem.  Barker's theorem (Theorem 1 of \cite{barker_biset}) is exactly the equivalence of $1.$ and $3.$ with slightly stronger hypothesis on the characteristic of the field $k$. 
\begin{theo}[Barker]\label{semi_simple}
Let $k$ be a field. Let $\D$ be a replete biset category. Then, the following are equivalent.
\begin{enumerate}
\item The category $\FDk$ is semi-simple.
\item For every group $H\in \D$, the algebra $kB(H,H)$ is a semi-simple algebra.
\item Every group $H$ of $\D$ is cyclic and $\mathrm{char}(k) \nmid \phi(|H|)$.
\end{enumerate}
\end{theo}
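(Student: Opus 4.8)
The plan is to prove the cycle $(1)\Rightarrow(2)$, $(2)\Leftrightarrow(3)$, $(3)\Rightarrow(1)$; only the last implication requires real work, and it is there that Lemma \ref{L_fun} will be used. The equivalence $(2)\Leftrightarrow(3)$ needs nothing beyond Bouc's theorem: since $\D$ is replete, hence full, $\D(H,H)=kB(H,H)$ for every $H\in\D$, so $(2)$ says exactly that each of these algebras is semi-simple, which by Theorem \ref{bouc_ss} means precisely that each $H$ is cyclic with $\mathrm{char}(k)\nmid\phi(|H|)$. For $(1)\Rightarrow(2)$, Yoneda's Lemma identifies $kB(H,H)$ with $\End_{\FDk}(Y_H)$, a finite-dimensional $k$-algebra. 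If $\FDk$ is semi-simple, then the finitely generated functor $Y_H$ is a direct sum of simple functors, necessarily a \emph{finite} sum since $\End_{\FDk}(Y_H)$ is finite-dimensional; writing $Y_H\cong\bigoplus_i S_i^{n_i}$ and invoking Schur's Lemma, $\End_{\FDk}(Y_H)\cong\prod_i M_{n_i}\big(\End_{\FDk}(S_i)\big)$ is a finite product of matrix algebras over division $k$-algebras, hence semi-simple; so $kB(H,H)$ is semi-simple.

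For $(3)\Rightarrow(1)$ I would first treat the case where $\D$ has finitely many isomorphism classes of objects. Set $N:=\mathrm{lcm}\{\,|H|\ :\ H\in\D\,\}$. If $p^{a}\mid N$ and $p^{a+1}\nmid N$ then $p^{a}\mid|H|$ for some $H\in\D$; as $H$ is cyclic and $\D$ replete, $C_{p^{a}}\in\D$, and $(3)$ applied to $C_{p^{a}}$ gives $\mathrm{char}(k)\nmid\phi(p^{a})=p^{a-1}(p-1)$; combining these over all prime powers dividing $N$ and using multiplicativity of $\phi$ yields $\mathrm{char}(k)\nmid\phi(N)$. Now $C_N$ is abelian, hence a $\NV_k$-group by Lemma \ref{abelian}, so Theorem \ref{ev_equiv} identifies $\mathcal{F}_{\Sigma(C_N),k}$ with $kB(C_N,C_N)\Mod$, and by Theorem \ref{bouc_ss} together with the divisibility just obtained this algebra is semi-simple. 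Work inside the replete category $\Sigma(C_N)$, which contains $\D$ as a full subcategory since every object of $\D$ is cyclic of order dividing $N$. For $H\in\D$ and a simple $k\Out(H)$-module $V$ one has $H\vdash_k C_N$, because $H$ is a quotient of the cyclic group $C_N$ (so $\mathrm{id}_H=\Def^{C_N}_{H}\Inf_{H}^{C_N}\in kB(H,C_N)B(C_N,H)$); Lemma \ref{L_fun} then yields $L_{H,V}\cong L_{C_N,\,L_{H,V}(C_N)}$. Since $kB(C_N,C_N)$ is semi-simple, $L_{H,V}(C_N)$ is a finite direct sum $\bigoplus_j W_j$ of simple $kB(C_N,C_N)$-modules; by the classification of the latter (Corollary \ref{lift_proj}) each $W_j\cong S_{K_j,X_j}(C_N)$, and since $C_N$ is a $\NV_k$-group the corollary following Theorem \ref{ev_equiv} gives $L_{C_N,W_j}\cong S_{K_j,X_j}$. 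Hence $L_{H,V}\cong\bigoplus_j S_{K_j,X_j}$ is a finite direct sum of simple functors; having $S_{H,V}$ as simple top, it must equal $S_{H,V}$. On the other hand $V$ is a projective $kB(H,H)$-module (as $kB(H,H)$ is semi-simple by $(2)$), so $L_{H,V}=\D(-,H)\otimes_{kB(H,H)}V$ is a direct summand of a power of $Y_H$, hence projective; therefore $S_{H,V}\cong L_{H,V}$ is projective for every $(H,V)$. Consequently each $Y_G$, being a direct sum of the projective covers $P_{H,V}=S_{H,V}$, is semi-simple, and since every biset functor is a quotient of a direct sum of representable functors, $\FDk$ is semi-simple.

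To handle an arbitrary replete $\D$ I would reduce to the finite case. The module $L_{H,V}(G)=kB(G,H)\otimes_{kB(H,H)}V$ and the subspace $J_{H,V}(G)$ appearing in $(\ref{jhv})$ depend only on the groups $H,G$ and the module $V$, not on the ambient admissible category. So, given $H,G\in\D$, apply the finite case to the full subcategory $\D''$ of $\D$ on the subquotients of $H$ together with those of $G$: it is finite, replete and inherits condition $(3)$, so $\mathcal{F}_{\D'',k}$ is semi-simple, whence $L^{\D''}_{H,V}=S^{\D''}_{H,V}$ and in particular $J_{H,V}(G)=J^{\D''}_{H,V}(G)=0$. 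Letting $G$ range over $\D$ shows $J_{H,V}=0$ in $\FDk$, so $S_{H,V}=L_{H,V}$, which is again projective; hence every simple functor of $\FDk$ is projective and $\FDk$ is semi-simple as before. In particular the equivalence of $(1)$ and $(3)$ reproves Barker's theorem.

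The step I expect to be the real obstacle is the finite case, and within it the production of a single cyclic group $C_N$ that simultaneously contains every object of $\D$ as a subquotient while still having a semi-simple double Burnside algebra — this is exactly what forces the (elementary but essential) passage from ``$\mathrm{char}(k)\nmid\phi(|H|)$ for each $H$'' to ``$\mathrm{char}(k)\nmid\phi(N)$'', and explains why condition $(3)$ must be imposed one group at a time. A secondary point needing care is to verify that once the simple summands $W_j$ of $L_{H,V}(C_N)$ are identified with values $S_{K_j,X_j}(C_N)$ of simple functors, one genuinely obtains a decomposition of $L_{H,V}$ inside $\mathcal{F}_{\Sigma(C_N),k}$, not merely inside $kB(C_N,C_N)\Mod$; here the $\NV_k$-property of $C_N$, through Theorem \ref{ev_equiv} and its corollary, is precisely what is required.
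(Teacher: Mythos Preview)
Your argument is correct and rests on the same key tool as the paper—Lemma \ref{L_fun}—but the organisation differs in two respects worth noting.

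First, for the core implication $(3)\Rightarrow(1)$ the paper does not pass through a single global $C_N$ nor through the equivalence of Theorem \ref{ev_equiv}. Instead, for each pair $(H,K)$ it takes $M=\mathrm{lcm}(H,K)$, observes (via Lemma \ref{ev}) that $L_{H,V}(M)$ is a projective \emph{indecomposable} $kB(M,M)$-module, hence already simple; thus $L_{M,L_{H,V}(M)}$ has its unique maximal subfunctor vanishing at $M$, so $J_{H,V}(M)=0$, and since $\mathrm{id}_K$ factors through $M$ one gets $J_{H,V}(K)=0$ directly. This is shorter than your route, which decomposes $L_{H,V}(C_N)$ and then reassembles via the $\NV_k$-equivalence; on the other hand your version makes the role of the non-vanishing machinery of Sections~4--5 more visible, which is in keeping with the paper's theme.

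Second, the passage from a ``small'' category to an arbitrary replete $\D$ is handled differently: the paper first proves the result for the full category $\mathcal{C}yc$ of all cyclic groups (where every pairwise lcm is available) and then restricts to $\D$, using that restriction carries simple functors indexed by groups of $\D$ to simple functors. You instead exploit the observation that $L_{H,V}(G)$ and $J_{H,V}(G)$ depend only on $H$, $G$ and $V$, reducing to a finite replete $\D''$ containing $H$ and $G$. Both reductions are legitimate; yours is arguably cleaner since it sidesteps any discussion of how restriction interacts with simples and projectives, while the paper's has the advantage of isolating a single ``universal'' category in which the whole computation takes place. The arithmetic verification that the relevant lcm still satisfies $\mathrm{char}(k)\nmid\phi(\cdot)$ is needed in both approaches and you handle it correctly.
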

\begin{proof}
Let $G \in \D$. If $\FDk$ is semi-simple, then the Yoneda functor $Y_G$ is direct sum of simple functors. So its endomorphism algebra is semi-simple. Moreover, by Yoneda's Lemma this last algebra is nothing but the double Burnside algebra of the group $G$. So $1$ implies $2$. And $3$ is nothing but a reformulation of $2$ using Bouc's Theorem \ref{bouc_ss}. 
\newline\indent Now, we will prove that $3$ implies $1$. Let $H \in \D$. Let $V$ be a simple $k\Out(H)$-module. Then by inflation, the module $V$ is also a simple $kB(H,H)$-module. Since the last algebra is semi-simple, the module $V$ is a projective indecomposable $kB(H,H)$-module. Since $L_{H,-}$ is a left adjoint to the exact functor $ev_H$, it sends projective modules to projective functors. Moreover, it sends indecomposable modules to indecomposable functors.  So the functor $L_{H,V}$ is a projective indecomposable functor with simple quotient $S_{H,V}$. In other words, this functor is a projective cover of $S_{H,V}$. It has a unique maximal subfunctor $J_{H,V}$ and this functor has the property of vanishing at $H$. 
\begin{itemize}
\item First let us assume that $\D$ is the full subcategory of the biset category consisting of all the cyclic groups. Let $K$ be a group of $\D$. Let $M$ be a $\mathrm{lcm}$ of the groups $H$ and $K$ (a minimal cyclic group such that $H$ and $K$ are subgroups of $M$). Then, the group $M$ is in the category $\D$. Since $M$ is abelian, the groups $H$ and $K$ are both isomorphic to a quotient of $M$. In other words, the groups $H$ and $K$ are $k$-generated by $M$. So, by Lemma \ref{L_fun}, we have an isomorphism
$$ \phi : L_{H,V} \xrightarrow{\sim} L_{M,L_{H,V}(M)}. $$
In particular, $\phi$ maps the maximal subfunctor $J_{H,V}$ to a maximal subfunctor of $L_{M,L_{H,V}(M)}$. But as explain in Lemma \ref{ev}, the $kB(M,M)$-module $L_{H,V}(M)$ is a projective indecomposable module. Moreover, by hypothesis $M$ is a cyclic group such that $\phi(|M|)$ is invertible in $k$, so the double Burnside algebra $kB(M,M)$ is semi-simple and $L_{H,V}(M)$ is a simple module. In particular, the functor $L_{M,L_{H,V}(M)}$ has a unique maximal subfunctor $J_{M,L_{H,V}(M)}$ which has the property of vanishing at $M$. In conclusion, we have: 
$$J_{H,V}(M)=0. $$
The group $K$ is also isomorphic to a quotient of $M$, so $id_{K}$ factorizes through $M$. In particular, the identity of $J_{H,V}(K)$ factorizes through $J_{H,V}(M)=0$. This implies that $J_{H,V}(K)=0$. In conclusion, we proved that every projective indecomposable functor in $\FDk$ is simple. Any functor in $\FDk$ is a quotient of a direct sum of projective functors. So, a quotient of a direct sum of simple functors. By usual arguments, this implies that every biset functor over $\D$ is semi-simple.  
\item If $H$ and $K$ are two cyclic groups of $\D$, then a $\mathrm{lcm}$ of $H$ and $K$ may not be in the category $\D$. We use the induction and restriction functors between the category $\FDk$ and $\mathcal{F}_{\mathcal{C}yc,k}$ where $\mathcal{C}yc$ is the full subcategory of the biset category consisting of all the cyclic groups. We refer to Section $3.3$ of \cite{bouc_biset} for more details. We use the fact that the projective indecomposable functors of $\FDk$ are exactly the restriction of the projective indecomposable functors of $\mathcal{F}_{\mathcal{C}yc,k}$ indexed by the groups of $\D$. The arguments of the previous point ($\star$) shows that any projective indecomposable functor of $\mathcal{F}_{\mathcal{C}yc,k}$ is simple. Since the restriction functor sends the simple functors indexed by groups of $\D$ to simple functors, the result follows. 
\item[$\star$] In order to apply the previous point we need to show that if $H$ and $K$ are groups in $\D$ and $M$ is a $\mathrm{lcm}$ of $H$ and $K$, then the double Burnside algebra of $M$ is semi-simple. By assumption, we now that $\phi(|H|)$ and $\phi(|K|)$ are invertible in $k$ this implies that $\phi(|M|)=\phi(\mathrm{lcm}(|H|,|K|)$ is invertible in $k$. So, the algebra $kB(M,M)$ is semi-simple. 
\end{itemize}
\end{proof}
In general, when the category of biset functors is not semi-simple, it is possible that some simple functors are also projective. However, the simple functor indexed by the trivial group $1$ is always as far from being projective as possible. For that reason we call it the trivial functor. In the rest of the section, we prove that a category of biset functors is semi-simple if and only if the trivial object is projective. 
\begin{de}\label{trivial}
Let $k$ be a field.
\begin{itemize}
\item Let $\D$ be a replete biset category. The simple functor $S_{1,k}$ of $\FDk$ is called the trivial functor.
\item Let $G$ be a finite group. The simple $kB(G,G)$-module $S_{1,k}(G)$ is called the trivial module.
\end{itemize}
\end{de}
First, we need a technical lemma about the Burnside module. 
\begin{lemma}\label{cyclic}
Let $k$ be a field. Let $G$ be a cyclic $p$-group such that $\mathrm{char}(k) \mid |\Out(G)|$. Then $kB(G)$ is not a simple $kB(G,G)$-module.
\end{lemma}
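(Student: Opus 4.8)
The plan is to reduce the statement to a single determinant computation. Since the characteristic $0$ never divides the positive integer $|\Out(G)|$, the hypothesis forces $q:=\mathrm{char}(k)>0$, and we may write $G=C_{p^n}$ with $n\geq 1$. By Corollary~\ref{burnside_proj} the module $kB(G)=L_{1,k}(G)$ is an indecomposable projective $kB(G,G)$-module whose unique simple quotient is $S_{1,k}(G)$; and since the evaluation functor is exact we have $S_{1,k}(G)\cong kB(G)/J_{1,k}(G)$, where $J_{1,k}$ denotes the maximal subfunctor of $L_{1,k}=kB$. An indecomposable module with simple top is simple if and only if it equals its top, so $kB(G)$ is a simple $kB(G,G)$-module if and only if $J_{1,k}(G)=0$. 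Thus it suffices to prove $J_{1,k}(G)\neq 0$.

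Next I would compute $J_{1,k}(G)$ explicitly via formula~\eqref{jhv} applied with $H=1$, $V=k$, $K=G$. Here $L_{1,k}(G)=kB(G,1)\otimes_{kB(1,1)}k\cong kB(G)$, and \eqref{jhv} says that $x\in kB(G)$ lies in $J_{1,k}(G)$ if and only if $\psi\times_{G}x=0$ in $kB(1,1)=k$ for every $\psi\in kB(1,G)$; in other words $J_{1,k}(G)$ is the right radical of the bilinear pairing $\beta\colon kB(1,G)\times kB(G,1)\to k$, $(\psi,x)\mapsto\psi\times_{G}x$. Since $G$ is cyclic its subgroups are the $C_{p^{j}}$ for $0\leq j\leq n$, all normal, so the transitive bisets $[(1\times G)/(1\times C_{p^{j}})]$ and $[(G\times 1)/(C_{p^{i}}\times 1)]$ form bases of $kB(1,G)$ and $kB(G,1)$. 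The Mackey formula~\eqref{mackey} (with ``middle group'' $G$ and outer groups $1$) identifies the product of the $j$-th and $i$-th basis elements with a disjoint union of $|C_{p^{j}}\backslash G/C_{p^{i}}|=[G:C_{p^{\max(i,j)}}]=p^{\,n-\max(i,j)}$ copies of the point, i.e.\ with the scalar $p^{\,n-\max(i,j)}\in k$. So $\beta$ has symmetric Gram matrix $M=\big(p^{\,n-\max(i,j)}\big)_{0\leq i,j\leq n}$, and $J_{1,k}(G)\neq 0$ is equivalent to $\det M=0$ in $k$.

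It then remains to evaluate $\det M$ over $\mathbb{Z}$ and compare with $|\Out(G)|$. Subtracting the $(i{+}1)$-st row of $M$ from the $i$-th for $i=n-1,\dots,0$ replaces row $i$ (for $i<n$) by $p^{\,n-1-i}(p-1)$ times the row vector whose first $i+1$ entries are $1$ and whose remaining entries vanish, leaving the last row equal to $(1,\dots,1)$; pulling out these scalars leaves a lower–triangular matrix with $1$'s on the diagonal, hence $\det M=(p-1)^{n}\,p^{\binom{n}{2}}$. Since $G$ is abelian, $|\Out(G)|=|\mathrm{Aut}(C_{p^{n}})|=\phi(p^{n})=p^{\,n-1}(p-1)$, so the hypothesis $q\mid|\Out(G)|$ forces either $q\mid p-1$, whence $q\mid(p-1)^{n}$, or else $q=p$ with $n\geq 2$, whence $\binom{n}{2}\geq 1$ and $q\mid p^{\binom{n}{2}}$. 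In both cases $q\mid\det M$, so $\det M=0$ in $k$, $J_{1,k}(G)\neq 0$, and $kB(G)$ is not simple.

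I expect the main obstacle to be the bookkeeping in the middle step: correctly identifying the canonical bases of $kB(1,G)$ and $kB(G,1)$ (which subgroups of $1\times G$ and $G\times 1$ occur, and that conjugation is trivial for an abelian group), keeping the left/right actions straight, and checking that $\psi\times_{G}x$ really evaluates to the orbit count $|C_{p^{j}}\backslash G/C_{p^{i}}|$; once the Gram matrix $M$ is pinned down, the row reduction and the number-theoretic comparison are routine. One may note in passing that the argument in fact yields the converse as well: for $G$ a cyclic $p$-group, $kB(G)$ is a simple $kB(G,G)$-module precisely when $\mathrm{char}(k)\nmid|\Out(G)|$.
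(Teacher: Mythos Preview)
Your argument is correct and takes a genuinely different route from the paper's own proof. The paper proceeds by a direct case analysis: when $\mathrm{char}(k)\mid p-1$ it exhibits the codimension-one submodule $N(G)=\{\sum_L\lambda_L\,G/L:\sum_L\lambda_L=0\}$, and when $\mathrm{char}(k)=p$ with $n\geq 2$ it exhibits the codimension-two submodule $N'(G)=\{\sum_L\lambda_L\,G/L:\lambda_G=0\text{ and }\sum_L\lambda_L=0\}$, checking stability under each transitive $G$-$G$-biset via the formula $\big((G\times G)/H\big)\cdot G/L=|[p_2(H)\backslash G/L]|\,G/(H\bullet L)$ and a somewhat intricate analysis of $H\bullet L$. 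Your approach instead identifies the radical $J_{1,k}(G)$ with the kernel of the pairing $kB(1,G)\times kB(G,1)\to k$, reduces simplicity to the non-vanishing of a single $(n{+}1)\times(n{+}1)$ determinant, and evaluates $\det M=(p-1)^n p^{\binom{n}{2}}$ by an elementary row reduction. This is more uniform (no case split in the construction), yields the converse for free, and in fact computes $\dim_k J_{1,k}(G)$ as the corank of $M$ rather than merely showing it is positive; the paper's approach, on the other hand, has the virtue of producing the obstructing submodules explicitly as concrete subspaces of $kB(G)$.
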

\begin{proof}
If $\mathrm{char}(k)\neq p$, then Bouc has already classified the composition factors of the $kB(G,G)$-module $kB(G)$ in Paragraph $5.6.9$ of \cite{bouc_biset}. However, Bouc used the idempotents of the Burnside ring $kB(G)$ in co-prime characteristic. So his method cannot be generalized to the case where the characteristic of the field is $p$. 
Let us look more carefully at the action of $kB(G,G)$ on $kB(G)$. Let $H$ be a subgroup of $G\times G$ and $L$ be a subgroup of $G$. Then, the action of the transitive $G$-$G$ biset $(G\times G)/H$ on the transitive $G$-set $G/L$ is given by the Mackey formula (see formula $(\ref{mackey})$). Since $G$ is a commutative group, the action is given by:
\begin{equation}\label{action}
\big((G\times G)/H\big) \cdot G/L = |[p_{2}(H)\backslash G/L]| G/(H\bullet L).
\end{equation}
Here $|[p_{2}(H)\backslash G/L]|$ is the size of a set of representatives of the double cosets $p_{2}(H) \backslash G/L$ and $H\bullet L$ is the subgroup of $G$ defined by:
\begin{equation*}
H\bullet L = \{g\in G\ ;\ \exists\ l\in L \hbox{ with } (g,l)\in H\}.
\end{equation*}
It is clear that $k_1(H) \leqslant H\bullet L \leqslant p_1(H)$.
\begin{enumerate}
\item If $\mathrm{char}(k)\mid (p-1)$, then $p=1$ in $k$. The action of a transitive $G$-$G$-biset $(G\times G)/H$ on a transitive $G$-set $G/L$ is given by $\big((G\times G)/H\big) \cdot G/L = G/(H\bullet L)$. Let us consider $N(G)$ the subspace of $kB(G)$ defined by:
\begin{equation*}
N(G) = \{\sum_{[L\leqslant G]} \lambda_{L}G/L\in kB(G)\ ;\ \sum_{[L\leqslant G]}\lambda_{L}=0\in k \}.
\end{equation*}
Where $[L\leqslant G]$ denotes a set of representatives of the conjugacy classes of the subgroups of $G$. 

It is a $k$-vector space of codimension $1$. Moreover, it is a non zero proper $kB(G,G)$-submodule of $kB(G)$. 
\item If $n>1$ and $\mathrm{char}(k)=p$, then $p^2 \mid |G|$ and $\dim_k kB(G)\geqslant 3$. Formula (\ref{action}) becomes:
\begin{equation*}
\big((G\times G)/H\big) \cdot G/L =\left\{
\begin{array}{cc}
 0 & \hbox{if $L\neq G$ and $p_{2}(H)\neq G$, }    \\
 G/(H\bullet L)&   \hbox{otherwise.}   \\
 \end{array}
\right.
\end{equation*}
Let us consider the subspace $N'(G)$ of $kB(G)$ defined by:
\begin{equation*}
N'(G):=\{ \sum_{[L\leqslant G]} \lambda_{L}G/L\ ;\ \lambda_{G}=0 \hbox{ and } \sum_{[L\leqslant G]} \lambda_{L} =0 \}. 
\end{equation*}
It is a $k$-vector space of codimension $2$ and we claim that it is also a $kB(G,G)$-submodule of $kB(G)$. Indeed if $H$ is a subgroup of $G\times G$ such that $p_{2}(H)\neq G$, then the action of $(G\times G)/H$ on $N'(G)$ is zero. Let $H$ be a subgroup of $G$ such that $p_{2}(H)=G$. Then the action of $(G\times G)/H$ on a transitive $G$-set $G/L$ is given by:
\begin{equation*}
\big((G\times G)/H\big)\cdot G/L = G/(H\bullet L).
\end{equation*}
We need to check if $H\bullet L$ can be equal to the group $G$. Since $H\bullet L$ is a subgroup of $p_{1}(H)$, we can assume that $p_{1}(H)=G$. 
The map which sends $g\in G/(H\bullet L)$ to an element $l(g)\in L$ such that $(g,l(g))\in H$ induces an isomorphism of groups:
\begin{equation*}
(H\bullet L)/k_{1}(H) \cong L/(k_{2}(H)\cap L). 
\end{equation*}
\begin{itemize}
\item Let $H\leqslant G$ be such that $p_{1}(H)=p_{2}(H)=G$ and $k_{2}(H)=G$. Since $p_{1}(H)/k_{1}(H)\cong p_{2}(H)/k_{2}(H)$, this condition implies that $k_{1}(H)=G$. Since $k_{1}(H)\leqslant H\bullet L$, then $H\bullet L = G$ for every subgroup $L$ of $G$. The space $N'(G)$ is therefore stable by the action of such a transitive $G$-$G$-biset.
\item If $k_{2}(H)=k_{1}(H)<G$. As $G$ is a cyclic $p$-group, then either $L\leqslant k_2(H)$ or $k_2(H)\leqslant L$.  If $L\leqslant k_{2}(H)$ then $k_{2}(H)\cap L =L$, therefore $H\bullet L =k_{1}(H)$. If $k_{2}(H)=L$ then $k_{2}(H)\cap L = k_{2}(H)$. So we have $(H\bullet L)/k_{1}(H) = L/k_{2}(H)$, since $k_{2}(H)=k_{1}(H)$, we have $|H\bullet L|=|L|$. In both cases we cannot have $H\bullet L = G$ and $N'(G)$ is a $kB(G,G)$-module. 
\end{itemize}
\end{enumerate}
\end{proof}
As corollary, we have the following useful reformulation of the Theorems of Barker and Bouc.
\begin{theo}\label{semi-simple}
Let $k$ be a field.
\begin{enumerate}
\item Let $\D$ be a replete biset category. Then, the category $\FDk$ is semi-simple if and only if the simple functor $S_{1,k}$ is projective.
\item Let $G$ be a finite group. Then, the double Burnside algebra $kB(G,G)$ is a semi-simple algebra if and only if the simple module $S_{1,k}(G)$ is projective.
\end{enumerate}
\end{theo}
\begin{proof}
By Theorem \ref{semi_simple}, we know that the category $\FDk$ is semi-simple if and only if every group in $\D$ is cyclic and for every $H\in \D$, $\mathrm{char}(k)$ doest not divide $|\Out(H)|$. For the double Burnside algebra,  by Theorem \ref{bouc_ss}, we know that $kB(G,G)$ is semi-simple if and only if $G$ is cyclic and $\mathrm{char}(k)$ does not divide the order of $|\Out(G)|$. So in both cases, it remains to see that if the category is not semi-simple, then the trivial functor (resp. module) is not projective. Or equivalently that its projective cover $kB$ is not simple. 
\begin{itemize}
\item If $G$ is not cyclic then $kB(G)$ is an indecomposable non simple module. Indeed by the proof of Proposition $6.1$ of \cite{bouc_biset}, the kernel of the linearization functor is a non zero proper submodule of $kB(G)$. As consequence, if the category $\D$ contains a non cyclic group, the functor $kB$ is non simple. 
\item if $G$ is cyclic, then by Theorem \ref{ev_equiv} the evaluation at $G$ induces an equivalence of categories between $\FSk$ and $kB(G,G)\Mod$. The group $G$ is a direct product of cyclic groups of prime power order, $G=P_1 \times \cdots \times P_r$. Let us assume that $\mathrm{char}(k)=p$ divides $|\Out(G)|$, then $p \mid |\Out(P_s)|$ for some $s \in \{1,\cdots, r\}.$ By Lemma \ref{cyclic}, the $kB(P_s,P_s)$-module $kB(P_s)$ is not simple. This implies that the functor $kB$ is not simple in $\FSk$ and by using one more times the equivalence of Theorem \ref{ev_equiv}, we have that $kB(G)$ is not simple.  
\item If $\D$ is a replete category containing a cyclic group $G$ such that $\mathrm{char}(k)\mid |\Out(G)|$, then by the previous point, the $kB(G,G)$-module $kB(G)$ is not simple, so the functor $kB$ is not simple. 
\end{itemize}
\end{proof}
\section{Self-injective property of the double Burnside algebra}
As explained in Section \ref{qh}, if the group $G$ is a non-vanishing group and if $k$ is a field of characteristic zero, then the double Burnside algebra is quasi-hereditary. However, some easy computations show that over a field of positive characteristic the double Burnside algebra may have infinite global dimension. Moreover, in the case of $A_5$, there is a self-injective block isomorphic to $\mathbb{C}[X]/(X^2)$ in $\mathbb{C}B(A_5,A_5)$. As consequence, we wonder under which hypothesis on the field or the group, the double Burnside algebra is a self-injective algebra.
\newline\indent If $\D$ is a replete biset category containing only finitely many isomorphism classes of objects, then by Morita's Theorem the category $\FDk$ is equivalent to the category of modules over a finite dimensional algebra. In particular, we will say that the category $\FDk$ is self-injective if the corresponding finite dimensional algebra is self-injective. Then we have to following result.
\begin{prop}\label{selfinj}
Let $k$ be a field and $\D$ be a replete biset category with only finitely many isomorphism classes of objects. Then $\FDk$ is self-injective if and only if it is semi-simple.
\end{prop}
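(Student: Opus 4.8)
\emph{Plan.} The implication ``semisimple $\Rightarrow$ self-injective'' is immediate, since in a semisimple category every object is both projective and injective; so the content is the converse. My plan is to exploit the characterization obtained in Theorem \ref{semisimple}: $\FDk$ is semisimple precisely when the trivial functor $S_{1,k}$ is projective, i.e.\ precisely when its projective cover $Y_1=L_{1,k}=kB$ coincides with $S_{1,k}$, i.e.\ precisely when $J_{1,k}=0$. Thus it suffices to prove that if $\FDk$ is self-injective then $J_{1,k}=0$.

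The heart of the argument is the following separation statement: \emph{for every object $H$ of $\D$, the existence of a nonzero morphism $S_{1,k}\to Y_H$ forces $J_{1,k}=0$.} To prove it I would start from a nonzero $f\colon S_{1,k}\to Y_H$, which is automatically injective because $S_{1,k}$ is simple, precompose it with the quotient map $\pi\colon Y_1=L_{1,k}\twoheadrightarrow S_{1,k}=L_{1,k}/J_{1,k}$ to obtain $g=f\circ\pi\colon Y_1\to Y_H$ with $\ker g=\ker\pi=J_{1,k}$ (using that $f$ is injective), and then apply Yoneda's lemma: $g$ corresponds to a fixed element $x\in Y_H(1)=kB(1,H)$, and at each object $G$ it is given by $g_G(\phi)=\phi\times_1 x$ for $\phi\in Y_1(G)=kB(G,1)$. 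The only genuine computation is then to check that, for $\phi\in kB(G,1)$ and $x\in kB(1,H)$, one has $\phi\times_1 x=0$ if and only if $\phi=0$ or $x=0$: this holds because on the transitive bases $\big((G\times 1)/A\big)\times_1\big((1\times H)/B\big)\cong\big((G\times H)/(A\times B)\big)$, and distinct pairs (conjugacy class of $A$ in $G$, conjugacy class of $B$ in $H$) give pairwise non-conjugate ``rectangular'' subgroups $A\times B$ of $G\times H$, hence linearly independent basis elements of $kB(G,H)$. Consequently $\ker g_G$ is $\{0\}$ when $x\neq 0$ and all of $kB(G,1)$ when $x=0$; since $g\neq 0$ forces $x\neq 0$, we get $J_{1,k}(G)=\ker g_G=0$ for every $G$, that is $J_{1,k}=0$.

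To finish: assume $\FDk$ is self-injective. Since $\FDk$ has enough injectives, $S_{1,k}$ embeds into an injective object $E$; self-injectivity makes $E$ projective, hence a direct summand of a direct sum $\bigoplus_i Y_{H_i}$ of representable functors. The composite $S_{1,k}\hookrightarrow E\hookrightarrow\bigoplus_i Y_{H_i}$ is nonzero, so it has a nonzero coordinate $S_{1,k}\to Y_{H_i}$ for some $i$. By the separation statement $J_{1,k}=0$, so $S_{1,k}=L_{1,k}$ is projective, and Theorem \ref{semisimple} then gives that $\FDk$ is semisimple.

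I expect the one non-formal point to be the linear-independence computation identifying when $\phi\times_1 x$ vanishes (equivalently, the observation that a subfunctor of a representable functor cannot contain a copy of $S_{1,k}$ unless $J_{1,k}=0$); everything else is a routine combination of Yoneda's lemma, the elementary structure of projective biset functors, and Theorem \ref{semisimple}.
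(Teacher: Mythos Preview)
Your argument is correct. Both you and the paper reduce the converse to showing that an embedding of $S_{1,k}$ into a projective functor forces $kB=L_{1,k}$ to be simple, and then invoke Theorem~\ref{semisimple}; but the mechanisms are genuinely different. The paper uses Webb's $\Delta$-filtration of projective indecomposables (Theorem~6.3 of \cite{webb_strat2}): from $S_{1,k}\subseteq\mathrm{Soc}(P_{H,V})$ one finds $S_{1,k}$ in the socle of some filtration quotient $\Delta_{H_i,U_i}$, and evaluating at the trivial group shows $H_i=1$, so $\Delta_{H_i,U_i}\cong\bigoplus kB$; since $S_{1,k}$ occurs only once (at the top) in $kB$, this forces $kB$ simple. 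You bypass the highest-weight machinery entirely: a nonzero $S_{1,k}\to Y_H$ lifts via $\pi$ to $g\colon Y_1\to Y_H$ with $\ker g=J_{1,k}$, and by Yoneda $g$ is right multiplication by some $x\in kB(1,H)$; the identity $[(G\times 1)/A]\times_1[(1\times H)/B]=[(G\times H)/(A\times B)]$ together with the injectivity of $([A],[B])\mapsto[A\times B]$ on conjugacy classes shows $g$ is injective whenever $x\neq 0$, hence $J_{1,k}=0$. Your route is shorter and self-contained, with no appeal to \cite{webb_strat2}; the paper's route has the compensating advantage that the $\Delta$-filtration survives evaluation at $G$, which is exactly what is needed for the next theorem on $kB(G,G)$, whereas your Yoneda computation does not transfer as directly to that setting.
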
 
\begin{proof}
We only need to prove that if this category is self-injective, it is semi-simple.  If the category of biset functors over $\D$ is self-injective, the application sending the top of a projective indecomposable functor to its simple socle induces a bijection on the set of isomorphism classes of simple functors. This bijection is called the Nakayama's permutation (see Lemma $1.10.31$ of \cite{zimmermann_rep} for more details). In particular, the simple functor $S_{1,k}$ must be in the socle of a projective indecomposable functor. 
\newline \indent Let $P_{H,V}$ be a projective cover of the simple functor $S_{H,V}$. By Theorem $6.3$ of \cite{webb_strat2}, there is a filtration
\[0=P_0 \subset P_1\subset \cdots \subset P_n=P_{H,V}, \]
such that $P_i/P_{i-1} \cong \Delta_{H_i,U_i}$, where $H_i \in \D$ and $U_i$ is a direct summand of a permutation $k\Out(H_i)$-module. So we have: 
\[ \mathrm{Soc}(P_{H,V}) \subseteq \bigoplus \mathrm{Soc}(\Delta_{H_i,U_i}), \]
where the $\Delta'$s runs through the standard quotients of $P_{H,V}$. In particular, if the simple functor $S_{1,k}$ is in the socle of $P_{H,V}$, then it is in the socle of some of its standard factors $\Delta_{H_i,U_i}$. Also, for a finite group $K$, if $\Delta_{H_i,U_i}(K)\neq 0$, then $H_i$ is a subquotient of $K$. So if $S_{1,k}$ is composition factor of $\Delta_{H_i,U_i}$ then $H_i=1$ and $U_i \cong \oplus k$. Moreover, $\Delta_{1,k} = L_{1,k}=kB$. As consequence, the simple functor $S_{1,k}$ only appears at the top of $\Delta_{1,k}$. So if the simple functor $S_{1,k}$ is in the socle of $P_{H,V}$ then it is in the socle of $\Delta_{1,k}$. This is the case if and only if $\Delta_{1,k}$ is simple. By Theorem \ref{semi-simple} this is the case if and only if $\FDk$ is semi-simple. 
\end{proof}
Now, we state the result for the double Burnside algebras. 
\begin{theo}
Let $k$ be a field. Let $G$ be a finite group. Then, the double Burnside algebra $kB(G,G)$ is self-injective if and only if it is semi-simple. 
\end{theo}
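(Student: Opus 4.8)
The ``if'' direction is immediate, since every semisimple algebra is self-injective; the content is the converse, and the plan is to replay the argument of Proposition \ref{selfinj} inside the module category $\kbg\Mod$ by transporting it along the exact evaluation functor $ev_G\colon\FSk\to\kbg\Mod$, the key bridge being that the Burnside module $kB(G)$ is a projective cover of $S_{1,k}(G)$ (Corollary \ref{burnside_proj}) and is the evaluation of the standard functor $\Delta_{1,k}=kB$.

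Assume $kB(G,G)$ is self-injective. By Theorem \ref{semisimple} it is enough to show $S_{1,k}(G)$ is projective, equivalently (Corollary \ref{burnside_proj}) that $kB(G)$ is simple. Self-injectivity means every simple module is the socle of an indecomposable projective (equivalently, injective) module, so $S_{1,k}(G)$ embeds in the socle of some indecomposable projective $kB(G,G)$-module, which by Corollary \ref{lift_proj} has the form $P_{H,V}(G)$ with $S_{H,V}(G)\neq0$. I would then take the standard filtration $0=P_0\subset P_1\subset\cdots\subset P_n=P_{H,V}$ of Theorem $6.3$ of \cite{webb_strat2}, where $P_i/P_{i-1}\cong\Delta_{H_i,U_i}$ with $U_i$ a summand of a permutation $k\Out(H_i)$-module, and apply $ev_G$. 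Since $ev_G$ is exact this gives a filtration of $P_{H,V}(G)$, and a simple submodule $S_{1,k}(G)$ of $P_{H,V}(G)$ must embed into the first filtration quotient $\Delta_{H_j,U_j}(G)$ that it meets; this quotient is then nonzero and has $S_{1,k}(G)$ as a composition factor.

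Because $S_{1,k}$ does not vanish at $G$ (Corollary \ref{burnside_proj}), Proposition \ref{ev_fact} promotes the last statement to: $S_{1,k}$ is a subquotient of the \emph{functor} $\Delta_{H_j,U_j}$. Evaluating at the trivial group, $0\neq S_{1,k}(1)$ is then a subquotient of $\Delta_{H_j,U_j}(1)=\overrightarrow{\D}(1,H_j)\otimes_{k\Out(H_j)}U_j$; but $\overrightarrow{\D}(1,H_j)=0$ when $H_j\neq1$, since every morphism in $\D(1,H_j)$ factors through the trivial group, a proper subquotient of $H_j$. Hence $H_j=1$, and then $U_j$ is a summand of a permutation $k\Out(1)=k$-module, so $U_j\cong k^{m}$ and $\Delta_{H_j,U_j}\cong(kB)^{m}$, using $\Delta_{1,k}=L_{1,k}=kB$ as recalled in the proof of Proposition \ref{selfinj}. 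Therefore $S_{1,k}(G)$ embeds into $(kB(G))^{m}$, hence into $\mathrm{Soc}\big((kB(G))^{m}\big)=\mathrm{Soc}(kB(G))^{m}$, hence into $kB(G)$ itself.

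To conclude, $kB(G)$ is indecomposable with simple top $S_{1,k}(G)$ (Corollary \ref{burnside_proj}), and $S_{1,k}(G)$ occurs in $kB(G)$ with multiplicity one: by exactness of $ev_G$ the composition factors of $kB(G)$ are the nonzero $S_{K,W}(G)$ with $S_{K,W}$ a composition factor of the functor $kB$ (with matching multiplicities, by the distinctness in Corollary \ref{lift_proj}), and $S_{1,k}$ is the unique composition factor of $kB$ surviving at the trivial group, occurring there once (it appears only at the top of $\Delta_{1,k}=kB$, again as recalled in the proof of Proposition \ref{selfinj}). An indecomposable module whose simple top has multiplicity one and also lies in its socle is simple; so $kB(G)=S_{1,k}(G)$ is simple, and $kB(G,G)$ is semisimple by Theorem \ref{semisimple}. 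The only delicate step, and the only place the biset-functor machinery is genuinely used, is the reduction $H_j=1$ together with the transfer of the multiplicity-one statement from the functor $kB$ to the module $kB(G)$; everything else is formal bookkeeping.
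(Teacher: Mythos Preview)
Your proof is correct and follows essentially the same route as the paper's: transport the standard filtration of $P_{H,V}$ through the exact evaluation $ev_G$, use Proposition \ref{ev_fact} to lift the occurrence of $S_{1,k}(G)$ in some $\Delta_{H_j,U_j}(G)$ back to the functor level, force $H_j=1$, and then conclude via Theorem \ref{semisimple}. Your write-up is in fact more explicit than the paper's on two points: you spell out the reason $H_j=1$ by evaluating at the trivial group (the paper simply invokes the proof of Proposition \ref{selfinj}), and you make the multiplicity-one argument for $S_{1,k}(G)$ inside $kB(G)$ explicit, whereas the paper jumps directly from ``$S_{1,k}(G)$ is in the socle of $kB(G)$'' to Theorem \ref{semisimple} without comment.
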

\begin{proof}
Since the group $1$ is a quotient of $G$, the evaluation at $G$ of $S_{1,k}$ is a non-zero simple module. Moreover, by Corollary \ref{lift_proj}, the simple $kB(G,G)$-modules are the non-zero evaluation of the simple functors. And if $S_{H,V}(G)\neq 0$, then $P_{H,V}(G)$ is a projective cover of this simple module. By Theorem $6.3$ of \cite{webb_strat2}, the projective functor $P_{H,V}$ has a filtration 
\[0=P_0 \subset P_1\subset \cdots \subset P_n=P_{H,V}, \]
such that $P_i/P_{i-1} \cong \Delta_{H_i,U_i}$, where $H_i \in \D$ and $U_i$ is a direct summand of a permutation $k\Out(H_i)$-module.
Since the evaluation at $G$ is an exact functor, the $kB(G,G)$-module has a (weak form of) filtration:
\[0=P_0(G) \subseteq P_{1}(G)\subseteq \cdots \subseteq P_n(G)=P_{H,V}(G), \]
and the quotients of this filtration are:
\[P_i(G)/P_{i-1}(G) \cong \Big(P_i/P_{i-1}\Big)(G) \cong \Delta_{H_i,U_i}(G).\]
Note that some of these factors may be zero, but not all of them since $S_{H,V}(G)\neq 0$ by hypothesis. Moreover, we have that $\mathrm{Soc}\big(P_{H,V}(G)\big) \subseteq \bigoplus\Big(\mathrm{Soc}\Delta_{H_i,U_i}(G)\Big).$ 
\newline\indent Let us assume that $kB(G,G)$ is a self-injective algebra. Then the simple module $S_{1,k}(G)$ must be in the socle of a projective indecomposable module $P_{H,V}(G)$. So $S_{1,k}(G)$ is in the socle of some $\Delta_{H_i,U_i}(G)$. Moreover, by Proposition \ref{ev_fact}, $S_{1,k}(G)$ is composition factor of $\Delta_{H_i,U_i}(G)$ if and only if $S_{1,k}$ is composition factor of $\Delta_{H_i,U_i}$ in the category of biset functors. So by the Proof of proposition \ref{selfinj}, we have $H_i=1$ and $U_i \cong \oplus k$. So $S_{1,k}(G)$ must be in the socle of $\Delta_{1,k}(G)=kB(G)$. By Theorem \ref{semi-simple}, this implies that $kB(G,G)$ is a semi-simple algebra. 
\end{proof}
\begin{re}
As corollary, we have that the double Burnside algebra of a finite group $G$ over a field $k$ is symmetric if and only if it is a semi-simple algebra. In \cite{trace_maps}, the author studied the symmetry of the Mackey algebra. The main tool was a central linear map on the Mackey algebra which comes from the monoidal structure of the category of modules over the Mackey algebra, that is the category of Mackey functors. There are lot of points in common between the theory of biset functors and the theory of Mackey functors. In particular, the category of biset functors is also a closed symmetric monoidal category under suitable hypothesis on the category $\D$. (see Chapter 8 of \cite{bouc_biset}). The trace map of this monoidal structure (see Section 4 of \cite{may_picard} for more details about the trace of a monoidal category) is a map which goes from the endomorphism ring of a finitely generated projective biset functor to the endomorphism ring of the Burnside functor. By taking a representable functor $Y_G$ , we have a central linear map:
\begin{align*}
tr : kB(G,G) \to End_{\FSk}(Y_G) \cong kB(1) \cong k.
\end{align*}
One can compute this trace and show that if $U$ is a $G$-$G$-biset then $tr(U) = |U/G|\in k$, that is the number of $G$-orbits in $U$ where $G$ acts diagonally on $U$. Unfortunately, this map cannot help to the comprehension of the symmetry of $kB(G,G)$ since the bilinear form $(U, V )\mapsto tr(U \times_G V)$ is always degenerate when $G \neq 1$.
\end{re}

\bibliographystyle{abbrv}

\end{document}